\newtheorem{prop}{Proposition}[section]
\newtheorem{lemma}{Lemma}[section]
\newtheorem{thm}{Theorem}[section]
\newtheorem{cor}{Corollary}[section]
\newtheorem{fact}{Fact}[section]
\newtheorem{rem}{Remark}[section]
\newtheorem{df}{Definition}[section]
\newtheorem{nt}{Notation}[section]
\newcommand{\tr}{\mathrm{Tr}}
\newcommand{\C}{\mathbb{C}}
\newcommand{\R}{\mathbb{R}}
\newcommand{\A}{\mathbb{A}}
\newcommand{\dist}{-\mathrm{dist}}
\renewcommand{\d}{\delta}
\renewcommand{\S}{\mathcal{S}}
\renewcommand{\P}{\mathcal{P}}
\newcommand{\Hom}{\mathrm{Hom}}
\newcommand{\Irr}{\mathrm{Irr}}
\newcommand{\Res}{\mathrm{Res}}
\newcommand{\Ker}{\mathrm{Ker}}
\newcommand{\Vect}{\mathrm{Vect}}
\newcommand{\K}{\mathrm{K}}
\renewcommand{\tr}{\mathrm{tr}}
\newcommand{\GL}{\mathrm{GL}}
\newcommand{\Ind}{\mathrm{Ind}}
\newcommand{\ind}{\mathrm{ind}}
\newcommand{\M}{\mathcal{M}}
\newcommand{\m}{\mathcal{m}}
\newcommand{\B}{\mathcal{B}}
\newcommand{\V}{\mathcal{V}}
\renewcommand{\H}{\mathcal{H}}
\newcommand{\Ri}{\mathcal{R}}
\newcommand{\N}{\mathcal{N}}
\newcommand{\W}{\mathcal{W}}
\newcommand{\w}{\mathcal{w}}
\renewcommand{\v}{\mathcal{v}}
\renewcommand{\u}{\mathcal{u}}
\newcommand{\x}{\mathcal{x}}
\newcommand{\y}{\mathcal{y}}
\newcommand{\z}{\mathcal{z}}
\renewcommand{\l}{\lambda}
\newcommand{\U}{\mathcal{U}}
\newcommand{\Z}{\mathcal{Z}}
\newcommand{\Q}{\mathbb{Q}}
\title{Symmetric periods for automorphic forms on unipotent groups}
\author{N. Matringe}
\begin{document}

\maketitle

\begin{abstract}
Let $k$ be a number field and $\A$ be its ring of adeles. Let $U$ be a unipotent group defined over $k$, and $\sigma$ a $k$-rational involution of $U$ with fixed points $U^+$. As a consequence of the results of Moore (\cite{Moore65}), the space $L^2(U(k)\backslash U_{\A})$ is multiplicity free as a representation of $U_{\A}$. Setting \[p^+:\phi\mapsto \int_{U^+(k)\backslash 
{U}_{\A}^+} \phi(u)du\] to be the period integral attached to $\sigma$ on the space of smooth vectors of $L^2(U(k)\backslash U_{\A})$, we prove that if $\Pi$ is a topologically irreducible subspace of 
$L^2(U(k)\backslash U_{\A})$, then $p^+$ is nonvanishing on the subspace of smooth vectors in $\Pi$ if and only if $\Pi^\vee=\Pi^\sigma$. This is a global analogue of the local results in \cite{B1}, \cite{B2} and \cite{Mat20}, on which the proof relies. 
\end{abstract}

\section*{Introduction}

Let $k$ denote a number field, $\A$ its ring of Adeles, $G$ an algebraic group defined over $k$ and $\sigma$ a $k$-rational involution of $G$ with fixed points $G^+$. Suppose that the automorphic quotient $G^+(k)\backslash G^+_\A$ carries a right $G^+_\A$-invariant measure. Studying the non-vanishing of (possible regularizations of) the linear form $p^+:\phi\mapsto \int_{G^+(k)\backslash G^+_\A} \phi(g)dg$ on topologically irreducible subspaces of $L^2(G(k)\backslash G_\A)$ is a very popular topic when $G$ is reductive, as it is related to the Langlands functoriality conjectures. Less popular and much simpler is the case of unipotent $G=U$, which this paper is concerned with. Still, the local analogues of this problem have been studied in \cite{B1} and \cite{B2} for archimedean fields, and in \cite{Mat20} for $p$-adic fields. A common feature is that 
if $v$ is a place of $k$ and $\pi_v^\infty$ is a smooth irreducible representation of $U(k_v)$, then the space $\Hom_{U^+(k_v)}(\pi_v^\infty,\C)$ (where we ask the linear functionals to be continuous in the archimedean case) is at most one dimensional and it is nonzero if and only if 
$(\pi_v^\infty)^\vee\simeq (\pi_v^\infty)^\sigma$. Here we prove that if $\Pi$ is a closed irreducible subspace of $L^2(U(k)\backslash U_\A)$, then $p^+$ is nonvanishing on the smooth vectors $\Pi^\infty$ of $\Pi$, which we reword as $\Pi$ being distinguished, if and only if $\Pi$ is conjugate self-dual: $\Pi^\vee=\Pi^\sigma$. Hence the local results stated above imply that $p^+$ is nonvanishing on $\Pi^\infty$ if and only if each of its local components $\pi_v^\infty$ is $U^+(k_v)$-distinguished for all places $v$ of $k$. In fact because of the very strong rigidity property satisfied by the automorphic representations of $U_\A$, the representation $\Pi$ is conjugate self-dual if and only if one of its local component is, hence $\Pi$ is distinguished if and only if it has one distinguished local component. The proof is summarized in the following sentence: the period integral $p^+$ composed with the Richardson intertwining operator defined in this paper from the Kirillov induced model of $\Pi^\infty$ to $L^2(U(k)\backslash U_A)$ unfolds to the product of the local invariant linear forms on the Kirillov induced model. \\

\noindent \textbf{Acknowledgements.} We thank Abderrazak Bouaziz and Pierre Torasso 
for useful discussions as well as providing us with necessary references. We thank Michel Duflo for a stimulating exchange concerning the paper \cite{Moore65} of Moore, and Dipendra Prasad for useful comments. We thank Pierre-Henri Chaudouard for suggesting to use restriction of scalars to extend Moore's multiplicity one result to number fields rather than reproducing Moore's proof in this generality. We thank the CNRS for granting us a "délégation" in 2022 from which this work benefited.

\section{Preliminary results}

In this paper $U$ denotes a (necessarily connected) unipotent 
algebraic group defined over a field $F$ of characteristic zero, with Lie algebra $\U$ equipped with the Lie bracket $[-,-]$. 

\subsection{The exponential map}

 The exponential map \[\exp:\U\rightarrow U\] is an isomorphism of algebraic $F$-varieties with reciprocal map $\ln$ 
(\cite[Proposition 4.1]{DG}). It restricts as a bijection from 
$\U(F)$ to $U(F)$. We call $U'(F)$ an $F$-subgroup of $U(F)$ if it is the $F$-points of a closed algebraic subgroup $U'$ of $U$ defined over $F$. The map $\exp$ 
induces a bijection between Lie sub-algebras of $\U(F)$ (resp. $\U$) and the $F$-subgroups of $U(F)$ (resp. $U$), for which ideals correspond to normal subgroups. Moreover if $U'$ is an $F$-subgroup of 
$U$ then $U'(F)\backslash U(F)\simeq (U'\backslash U)(F)$ by \cite[14.2.6]{Spr}, and this bijection becomes a group isomorphism if $U'(F)$ is normal in $U$ in which case both quotients identify to 
$\frac{\U(F)}{\U'(F)}= \frac{\U}{\U'}(F)$ via $\exp$.  

We denote by $\Z$ the center of $\U$, and by $Z$ the center $\exp(\Z)$ of $U$. The nilpotency of $\U$ implies that $\Z$ is non trivial, and from this one easily deduces by induction, reasonning on whether it contains $\Z$ or not, that any codimension $1$ Lie sub-algebra of $\U$ is automatically an ideal. 

The fundamental example of non abelian unipotent group is the Heisenberg group \[U(F)=H_3(F)=\{h(x,y,z):=\begin{pmatrix} 1 & x & z \\ & 1 & y\\ & & 1\end{pmatrix}, \ x, \ y, \ z \in F\}.\] 
We will denote by \[L(F)=\{h(0,y,z), \ y, \ z \in F\}\] its normal Lagrangian subgroup. 

\subsection{Complementary basis of a Lie subalgebra}\label{subsec compl basis}

Let $\M(F)$ be a Lie subalgerba of $\U(F)$. We follow \cite[Definition 2.1.1]{B1} for the definition of a 
complementary basis of $\M(F)$ in $\U(F)$ (see also \cite[2. Definition]{CGP} for the related and more or less equivalent notion of weak basis of $\U(F)$ through $\M(F)$). Set $d:=\dim(\U(F))-\dim(\M(F))$. Then by definition, \textit{a complementary basis of $\M(F)$ in $\U(F)$} is a family $(\u_{1},\dots,\u_{d})$ in $\U(F)^{d}$ such that $\U(F)=\M(F)\oplus \Vect_F(\u_{1},\dots,\u_{d})$ and such that 
$\M(F)\oplus \Vect_F(\u_{1},\dots,\u_{d-i})$ is a Lie subalgebra of $\U(F)$ for $i=0,\dots,d$. Existence of such basis is easy to prove by induction (see for example \cite[Lemme 2.3.1]{B1}, the proof of which is valid over $F$). If 
$\B:=(\u_{1},\dots,\u_{d})$ is such a basis, then by \cite[Lemme 2.1]{B1}, the map:

\[\Phi_{\B,\M}:(\m,t_1,\dots,t_d)\in \M(F)\times F^d\mapsto \exp(\m)\exp(t_1 \u_1)\dots \exp(t_d \u_d) \in U(F)\] 
is bijective and polynomial, and its inverse map as well. In particular the map 
\[I_{\B,\M}:(t_1,\dots,t_d)\in F^d\mapsto \overline{\Phi_{\B}(0,t_1,\dots,t_d)}\in M(F)\backslash U(F)\] is a homeomorphism 
whenever $F$ is a local field of characteristic zero.

\subsection{Polarizations and restriction of scalars}\label{subsec pol and res}

For $\ell\in \U(F)^*=\Hom_F(\U(F),F)$, we say that a Lie subalgebra $\V(F)$ of $\U(F)$ is subordinate to $\ell$ if 
$[\V(F),\V(F)]\subseteq \Ker(\ell)$. We will moreover say that the pair $(\ell,\V(F))$ is polarized if $\V(F)$ is subordinate to $\ell$ and of maximal dimension for this property. We denote by $\mathcal{P}(\U(F))$ the set of polarized pairs for $\U(F)$. The Lie algebra $\U(F)$ acts on $\U(F)^*$ via the co-adjoint action. 
 We recall the following fact from \cite[Lemma 5.2]{K}, the proof of which applies for unipotent $F$-groups.

\begin{fact}\label{fact dim}
If $(\ell,\V(F))\in \mathcal{P}(\U(F))$, then the coadjoint orbit 
$U(F)\cdot \ell$ has even dimension and \[\dim(\V(F))=\dim(\U(F))-\dim(U(F)\cdot \ell)/2.\]
\end{fact}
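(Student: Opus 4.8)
The plan is to prove Fact 1.5 by reducing to the well-known case over $\R$ or $\C$, or, better, by giving a uniform argument directly over $F$ via the nondegenerate alternating form attached to $\ell$. First I would introduce, for $\ell \in \U(F)^*$, the alternating bilinear form $B_\ell$ on $\U(F)$ defined by $B_\ell(X,Y) = \ell([X,Y])$, whose radical is $\mathrm{rad}(B_\ell) = \{X \in \U(F) : \ell([X,\U(F)]) = 0\}$. The key identification is that $\mathrm{rad}(B_\ell)$ is precisely the Lie algebra of the stabilizer of $\ell$ under the coadjoint action (this uses that $U$ is unipotent, so the orbit map $u \mapsto u\cdot\ell$ is a polynomial submersion onto $U(F)\cdot\ell$ with differential $X \mapsto -\ell([X,-]) = -B_\ell(X,-)$); hence $\dim(U(F)\cdot\ell) = \dim(\U(F)) - \dim(\mathrm{rad}(B_\ell)) = \mathrm{rank}(B_\ell)$, which is automatically even since $B_\ell$ is alternating. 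That gives the first assertion of the fact.

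For the dimension formula, I would observe that a subspace $\V(F) \subseteq \U(F)$ satisfies $[\V(F),\V(F)] \subseteq \Ker(\ell)$ if and only if $\V(F)$ is \emph{totally isotropic} for $B_\ell$, i.e.\ $\V(F) \subseteq \V(F)^{\perp_{B_\ell}}$. Among subspaces with this property the maximal possible dimension is a standard fact of symplectic-type linear algebra over any field: a maximal totally isotropic subspace of an alternating form of rank $2r$ on a space of dimension $n = 2r + s$ (where $s = \dim\mathrm{rad}(B_\ell)$) has dimension $r + s$, obtained by taking the radical together with a Lagrangian of the induced nondegenerate form on $\U(F)/\mathrm{rad}(B_\ell)$. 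However, to conclude the fact as stated one needs that among such maximal totally isotropic \emph{subspaces}, there is one which is moreover a \emph{Lie subalgebra} (so that the polarized pair exists) and — more to the point — that \emph{every} polarization has this maximal dimension $r+s$. The inequality $\dim(\V(F)) \le r + s$ holds for any subordinate $\V(F)$ by the linear algebra above; the reverse, that a Lie-subalgebra polarization actually attains it, is what the reference \cite[Lemma 5.2]{K} supplies, and here I would simply invoke it, noting that the cited proof only uses nilpotency of $\U$ and hence is valid over any field $F$ of characteristic zero. Combining, $\dim(\V(F)) = r + s = \tfrac{1}{2}(n - s) + s = n - \tfrac{1}{2}(n-s) = \dim(\U(F)) - \dim(U(F)\cdot\ell)/2$, as claimed.

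Alternatively, and perhaps more in the spirit of the subsection title, I would handle this by \emph{restriction of scalars}: writing $F$ as a finite extension of $\Q$ (or just noting $F \supseteq \Q$ in characteristic zero, but more usefully embedding into $\R$ or $\C$ after base change), one replaces $\U(F)$ by the Lie algebra $\mathrm{Res}_{F/\Q}\U$ over $\Q$, or base-changes to $\C$; a polarization of $(\ell,\V(F))$ base-changes to a polarization, and coadjoint orbit dimensions and subalgebra dimensions are insensitive to field extension (the relevant ranks and kernels are computed by the same matrices). This reduces Fact 1.5 to its classical statement over $\C$ (Kirillov), which is \cite[Lemma 5.2]{K} in its original form. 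The main obstacle in either route is the same and is genuinely the one point I would not want to reprove: that a maximal \emph{subordinate Lie subalgebra} is automatically a maximal totally isotropic \emph{subspace}, i.e.\ that one cannot do strictly better dimension-wise with a non-subalgebra — this requires the inductive argument on the nilpotency length that produces, inside any totally isotropic subspace containing the radical, a subordinate subalgebra of the same dimension, and it is exactly the content imported from \cite{K}. Everything else is formal linear algebra over $F$ plus the remark, already made in the excerpt for other cited results, that Kirillov's proof is characteristic-zero and field-agnostic.
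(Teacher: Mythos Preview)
Your proposal is correct and matches the paper's approach: the paper does not give a proof at all but simply cites \cite[Lemma 5.2]{K} and remarks that its proof applies over $F$, which is exactly what you do after unpacking the underlying linear algebra of the alternating form $B_\ell$. Your exposition is more detailed than the paper's bare citation, but the essential input---that a maximal subordinate Lie subalgebra attains the maximal isotropic dimension, via Kirillov's induction---is the same in both.
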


We denote by $[\ell]$ the $U(F)$-orbit of $\ell\in \U(F)^*$. Suppose that $F/K$ is a finite extension, and denote by $\Res_{F/K}$ the Weil restriction of scalars functor. For $\ell\in \Hom_F(\U(F),F)$, we set $\ell_K:=\tr_{F/K} \circ \ell\in \Res_{F/K}\U(K)^*:=\Hom_K(\U(F),K)$. The group $U(F)=\Res_{F/K}U(K)$ acts on $\Res_{F/K}\U(K)^*$ by the co-adjoint action as well, and we again use the notation $[\ell_K]$ for the orbit of $\ell_K\in \Res_{F/K}\U(K)^*$. 

\begin{lemma}\label{lm orbits vs res scal}
The $K$-linear map $\ell\mapsto \ell_K$ is a bijection between $\U(F)^*$ and $\Res_{F/K}\U(K)^*$, hence induces an injection from  $[\ell]\mapsto [\ell_K]$ from 
$U(F)\backslash \U(F)^*$ to $\Res_{F/K}U(K)\backslash \Res_{F/K}\U(K)^*$.
\end{lemma}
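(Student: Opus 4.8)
The plan is to reduce everything to the nondegeneracy of the trace form of the extension $F/K$, which is automatic here since $\mathrm{char}(F)=0$ makes $F/K$ separable: the $K$-bilinear pairing $F\times F\to K$, $(a,b)\mapsto\tr_{F/K}(ab)$, is nondegenerate. First I would record that $\ell\mapsto\ell_K$ is visibly $K$-linear, and then check injectivity: if $\ell\neq 0$, choose $X\in\U(F)$ with $\ell(X)=c\neq 0$ and pick $a\in F$ with $\tr_{F/K}(ac)\neq 0$; since $\ell$ is $F$-linear, $\ell_K(aX)=\tr_{F/K}(a\ell(X))=\tr_{F/K}(ac)\neq 0$, so $\ell_K\neq 0$. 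Because $\dim_K\Hom_F(\U(F),F)=[F:K]\dim_F\U(F)=\dim_K\U(F)=\dim_K\Hom_K(\U(F),K)$, an injective $K$-linear map between these finite-dimensional spaces is bijective, which is the first assertion.

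Next I would verify that $\ell\mapsto\ell_K$ intertwines the two coadjoint actions of $U(F)$. By construction of the Weil restriction, the $K$-Lie algebra $\Res_{F/K}\U(K)$ is $\U(F)$ viewed over $K$, and at the level of $K$-points $\Res_{F/K}U(K)=U(F)$ acts on it through the same adjoint representation $\Ad$, now merely regarded in $\GL_K(\U(F))$; hence the coadjoint action on $\Hom_K(\U(F),K)$ is still $(g\cdot\phi)(X)=\phi(\Ad(g)^{-1}X)$, exactly as on $\U(F)^*$. As $\tr_{F/K}$ is $K$-linear, for $g\in U(F)$ and $\ell\in\U(F)^*$ one computes $(g\cdot\ell)_K(X)=\tr_{F/K}(\ell(\Ad(g)^{-1}X))=\ell_K(\Ad(g)^{-1}X)=(g\cdot\ell_K)(X)$, i.e. $(g\cdot\ell)_K=g\cdot\ell_K$. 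Consequently the bijection $\ell\mapsto\ell_K$ carries each $U(F)$-orbit $[\ell]$ bijectively onto the $U(F)$-orbit $[\ell_K]$, so $[\ell]\mapsto[\ell_K]$ is a well-defined injection (indeed a bijection) from $U(F)\backslash\U(F)^*$ to $\Res_{F/K}U(K)\backslash\Res_{F/K}\U(K)^*$.

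I do not expect a genuine obstacle: the content is just the functoriality of the trace pairing together with the tautological fact that the (co)adjoint action of $\Res_{F/K}U$ on $K$-points coincides with the coadjoint action of $U(F)$ read $K$-linearly. The only points requiring a little care are reading $\Res_{F/K}\U(K)^*$ as $\Hom_K(\U(F),K)$, keeping straight which field each Hom-space is linear over in the dimension count, and noting that the group acting on $\Res_{F/K}\U(K)^*$ is $U(F)$ itself and not a larger group — all bookkeeping rather than mathematics.
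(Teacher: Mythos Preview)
Your argument is correct and essentially the same as the paper's: both reduce injectivity to the fact that $\Ker(\tr_{F/K})$ contains no nonzero $F$-subspace (equivalently, nondegeneracy of the trace form), and both conclude bijectivity by the dimension count. You are in fact slightly more thorough than the paper, which leaves the equivariance $(g\cdot\ell)_K=g\cdot\ell_K$ implicit in the word ``hence''; your explicit verification of this point is welcome but not a genuinely different route.
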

\begin{proof}
The map $\ell\mapsto \ell_K$ is injective because 
$\ell_K=\ell'_K$ means that $\mathrm{Im}(\ell-\ell')$ is an $F$-vector subspace of $\Ker(\tr_{F/K})$, hence must be zero. It is bijective because both $\U(F)^*$ and $\Res_{F/K}\U(K)^*$ have the same dimension 
$\dim_K(\U(F))$ over $K$. 
\end{proof}

We deduce from the above lemma the following consequence concerning polarizations. 

\begin{lemma}\label{lm pol vs res scal}
Let $\ell$ belong to $\U(F)^*$, and $\V(F)=\Res_{F/K}\V(K)$ be a Lie sub-algebra of $\U(F)$. Then 
$(\ell,\V(F))\in \P(\U(F))$ if and only if $(\ell_K,\Res_{F/K}\V(K))\in  \P(\Res_{F/K}\U(K))$.
\end{lemma}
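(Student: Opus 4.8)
The plan is to translate the defining conditions of a polarized pair across the restriction-of-scalars correspondence, using Lemma \ref{lm orbits vs res scal} as the bridge. First I would record the two things that need to be checked: that $\V(F)=\Res_{F/K}\V(K)$ is subordinate to $\ell$ if and only if $\Res_{F/K}\V(K)$ is subordinate to $\ell_K$, and that the maximality of dimension is preserved. The first point is immediate: $[\V(F),\V(F)]\subseteq \Ker(\ell)$ is equivalent to $\tr_{F/K}(\ell([\V(F),\V(F)]))=0$, i.e. to $[\V(F),\V(F)]\subseteq \Ker(\ell_K)$, because $\ell([\V(F),\V(F)])$ is an $F$-subspace of $F$ and the trace form is nondegenerate, so a nonzero $F$-subspace cannot lie in $\Ker(\tr_{F/K})$ — this is exactly the argument already used in the proof of Lemma \ref{lm orbits vs res scal}. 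Here I should be a little careful that subordination is a condition on $F$-sub-Lie-algebras that happen to come from $K$-sub-Lie-algebras via $\Res_{F/K}$, and that $[\Res_{F/K}\V(K),\Res_{F/K}\V(K)]=\Res_{F/K}[\V(K),\V(K)]$ as subspaces of $\U(F)$, so the bracket computation is literally the same space viewed over the two fields.

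Next I would handle maximality. The cleanest route is via dimension counting through Fact \ref{fact dim}, combined with the orbit correspondence of Lemma \ref{lm orbits vs res scal}. One shows that the coadjoint orbit $U(F)\cdot \ell$ and the coadjoint orbit $\Res_{F/K}U(K)\cdot \ell_K$ correspond under the $K$-linear bijection $\ell'\mapsto \ell'_K$ — indeed that bijection is equivariant for the $U(F)=\Res_{F/K}U(K)$ actions, since $(\mathrm{Ad}^*(u)\ell)_K=\tr_{F/K}\circ \mathrm{Ad}^*(u)\ell=\mathrm{Ad}^*(u)\ell_K$. Hence $\dim_K(\Res_{F/K}U(K)\cdot \ell_K)=[F:K]\dim_F(U(F)\cdot \ell)$, and likewise $\dim_K(\Res_{F/K}\U(K))=[F:K]\dim_F(\U(F))$ and $\dim_K(\Res_{F/K}\V(K))=[F:K]\dim_F(\V(F))$. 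If $(\ell,\V(F))$ is polarized, then by Fact \ref{fact dim} applied over $F$, $\dim_F\V(F)=\dim_F\U(F)-\dim_F(U(F)\cdot\ell)/2$; multiplying by $[F:K]$ and using the displayed identities gives exactly the dimension that Fact \ref{fact dim} (applied over $K$ to $\Res_{F/K}\U(K)$) demands of a polarization of $\ell_K$. Combined with the subordination equivalence above and the fact that a subordinate subalgebra of $\Res_{F/K}\U(K)$ of this maximal dimension is by definition polarized, we get that $(\ell_K,\Res_{F/K}\V(K))$ is polarized. The converse direction is symmetric: run the same dimension identities backwards, noting that $\V(F)$ being subordinate of maximal dimension for $\U(F)$ is forced once $\Res_{F/K}\V(K)$ is polarized for $\Res_{F/K}\U(K)$, because any strictly larger subordinate subalgebra of $\U(F)$ would, after applying $\Res_{F/K}$, contradict maximality over $K$.

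I would also double-check the one genuinely delicate point: that $\Res_{F/K}\U(K)$ is a unipotent $K$-group with Lie algebra $\Res_{F/K}\U(K)$ (so that Fact \ref{fact dim} is legitimately available over $K$), and that the coadjoint action on $\Res_{F/K}\U(K)^*=\Hom_K(\U(F),K)$ factors through the identification with $\Hom_F(\U(F),F)$ as just claimed. These are standard properties of Weil restriction for unipotent groups in characteristic zero, but they deserve an explicit sentence. The main obstacle, then, is not any deep idea but rather bookkeeping: making sure the bracket $[\Res_{F/K}\V(K),\Res_{F/K}\V(K)]$ and the orbit $\Res_{F/K}U(K)\cdot\ell_K$ are genuinely the "same objects over a different base field," so that subordination and orbit dimension transport cleanly, and that the maximality clause is handled by dimension count rather than by a direct comparison of lattices of subalgebras (which would be messier, since not every $K$-subalgebra of $\Res_{F/K}\U(K)$ is of the form $\Res_{F/K}$ of an $F$-subalgebra — but happily Fact \ref{fact dim} lets us sidestep that entirely).
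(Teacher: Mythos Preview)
Your proof is correct and follows essentially the same strategy as the paper: establish the subordination equivalence via the ``no nonzero $F$-subspace lies in $\Ker(\tr_{F/K})$'' argument, then transfer maximality by a dimension count through Fact~\ref{fact dim}. The only cosmetic difference is in how the orbit-dimension identity $\dim_K(U(F)\cdot\ell_K)=[F:K]\dim_F(U(F)\cdot\ell)$ is obtained: the paper computes the stabilizer Lie algebras $\{x:\ell([\U(F),x])=0\}$ and $\{x:\ell_K([\U(F),x])=0\}$ directly and observes they coincide (again by the trace-kernel trick), whereas you deduce the same thing from the $U(F)$-equivariance of the $K$-linear bijection $\ell'\mapsto\ell'_K$ --- which of course implies equality of stabilizers, so the two arguments are interchangeable.
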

\begin{proof}
If $\V(F)$ is subordinate to $\ell$, then it is clearly subordinate to $\ell_K$. Conversely if it is subordinate to 
$\ell_K$, then $\ell([\V(F),\V(F)])$ is an $F$-vector subspace of $\Ker(\tr_{F/K})$, hence must be trivial. Following the proof of \cite[Lemma 5.2]{K}, the Lie algebra of the stabilizer $U(F)_\ell$ of $\ell$, respectively of the stabilizer $U(F)_{\ell_K}$ of $\ell_K$, is equal to $\{x\in \U(F),\ \ell([\U(F),x])=0\}$, respectively $\{x\in \U(F),\ \ell_K([\U(F),x])=0\}$. In particular these two spaces are equal as 
$\Ker(\tr_{F/K})$ contains no nonzero $F$-vector space. As a consequence the respective stabilizers $U(F)_\ell$ and $U(F)_{\ell_K}$ of $\ell$ and $\ell_K$, which are automatically connected, are equal because so are their Lie algebras, hence 
$U(F)\cdot \ell$ and $U(F)\cdot \ell_K$ have the same dimension over $K$. Hence the equality \[\dim_K(\V(F))=\dim_{K}(\U(F))-\dim_{K}(U(F)\cdot \ell_K)/2\] holds if and only if \[\dim_F(\V(F))=\dim_F(\U(F))-\dim_F(U(F) \cdot \ell)/2,\] and we conclude thanks to Fact \ref{fact dim} again.
\end{proof}

\subsection{Kirillov's decomposition of nilpotent Lie algebras}\label{sec kirillov dec}

\textit{In this paragraph we suppose that $\Z$ is of dimension $1$}. By Kirillov's lemma (\cite[Lemma 4.1]{K}) there is a "canonical" decomposition
\[\U(F)=F.\x\oplus F.\y\oplus F.\z \oplus \W(F)\] which means that the vectors 
$\x$, $\y$, $\z$ and $\W$ have the following properties:

\begin{enumerate}
\item $\Z(F)= F.\z$.
\item $[\x,\y]=\z$.
\item $[\y, \W(F)]=\{0\}$.
\end{enumerate}

The Lie sub-algebra \[\U_0(F):=F.\y \oplus F.\z \oplus \W(F)\] is automatically a codimension $1$ ideal of $\U(F)$ (it is the codimension one Lie subalgebra of $\U(F)$ centralizing $\y$) and we set \[U_0=\exp(\U_0).\] We also set $X(F)=\exp(F.\x)$, $Y(F)=\exp(F.\y)$ and $Z(F)=\exp(F.\z)$. Note that  $F.\x \oplus  F.\y \oplus F.\z$ is a Lie algebra isomorphic to $\H_3(F)$, hence $\exp(F.\x \oplus  F.\y \oplus F.\z)$ is a closed subgroup of $U(F)$ isomorphic to $H(F)$. 
We observe that $\y$ and $\z$ are central in $\U_0(F)$ hence they belong to $\V_0(F)$ whenever $(\ell_0,\V_0(F))\in \mathcal{P}(\U_0(F))$. 
We set \[h(x,y,z)=\exp(y.\y)\exp(x\x)\exp(z.\z)\] and use $h$ to consider $H(F)$ as a subgroup of $U(F)$ which satisfies 
\[H(F)\cap U_0(F)=L(F).\] We also 
set \[x(t)=h(t,0,0), \ y(t)=h(0,t,0) \ \mathrm{and} \ z(t)=h(0,0,t)\] for $t$ in any $F$-algebra.

\subsection{Rational involutions acting on unipotent groups}

In this paragraph $U$ is equipped with an $F$-rational involution $\sigma$, and we denote by $\sigma$ again the differential of $\sigma$ at $1$: it is an involutive homomorphism of $\U(F)$. Whenever $\mathcal{E}$ is a $\sigma$-stable subspace of $\U(F)$, it decomposes as $\mathcal{E}^+(F)\oplus \mathcal{E}^-(F)$ as the sum of the $\pm 1$ eignespaces of $\sigma$. In particular any $\v\in \U(F)$ can be written uniquely as $\v^+ + \v^-$. In this situation we have the following result as a consequence of 
\cite[Lemme 2.2.1]{B1}. Note that \cite[Lemme 2.2.1]{B1} applies over $F$ as it relies on \cite[Proposition 1.1.2]{V}, and this latter reference has no assumption on the base field.

\begin{lemma}\label{lm polarisation stable}
Take $\ell\in \U(F)^*$ such that $\ell$ is trivial on $\U(F)^+$, then there is a $\sigma$-stable Lie algebra $\V(F)$ of $\U(F)$ such that 
$(\ell,\V(F))\in \P(\U(F))$.
\end{lemma}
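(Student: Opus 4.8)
The plan is to reduce the statement to the analogous result over an algebraically closed base field, or more precisely to the classical theory of polarizations for $\sigma$-stable functionals, via the structural input that $\ell$ is trivial on $\U(F)^+$. The key observation is that such an $\ell$ is naturally an element of $(\U(F)^-)^*$, i.e. it ``lives on the $-1$-eigenspace,'' and this is exactly the setup of \cite[Lemme 2.2.1]{B1}: one wants a polarization adapted to the involution, constructed so that it is itself $\sigma$-stable rather than merely subordinate. So the first step is simply to check that the hypotheses of \cite[Lemme 2.2.1]{B1} are met in our situation: that lemma produces, for $\ell$ vanishing on $\U(F)^+$, a Lie subalgebra $\V(F)$ subordinate to $\ell$, of maximal dimension among subordinate subalgebras, and stable under $\sigma$. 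Since \cite[Lemme 2.2.1]{B1} is stated for real Lie algebras but, as the paragraph preceding the statement emphasizes, rests only on \cite[Proposition 1.1.2]{V}, which has no hypothesis on the base field, the argument goes through verbatim with $\R$ replaced by $F$.

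In slightly more detail, I would recall the mechanism of \cite[Proposition 1.1.2]{V}: starting from a $\sigma$-stable ideal chain of $\U(F)$ refining the lower central series, one builds the polarization of $\ell$ inductively (the ``Vergne polarization'') by taking, at each step, the radical of the restriction of the alternating form $b_\ell(x,y) = \ell([x,y])$ to the successive terms of the chain. The point is that if every ideal in the chain is $\sigma$-stable and $\ell \circ \sigma = \ell$ — which holds here because $\ell$ kills $\U(F)^+$, so for $\v = \v^+ + \v^-$ we get $\ell(\sigma(\v)) = \ell(\v^+ - \v^-) = -\ell(\v^-) $, hence one should actually arrange $\ell\circ\sigma = -\ell$ or pass to the form $b_\ell$ which is then $\sigma$-invariant — then each radical computed along the way is automatically $\sigma$-stable, so the resulting polarization $\V(F)$ is $\sigma$-stable. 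A $\sigma$-stable ideal chain refining the central series always exists: take the lower central series itself, whose terms $C^i\U$ are characteristic hence $\sigma$-stable, and refine each graded piece $C^i\U/C^{i+1}\U$ by a $\sigma$-stable flag of subspaces, which exists because a linear involution is diagonalizable over $F$ (characteristic zero), so one can choose the flag respecting the eigenspace decomposition.

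Finally, one checks that the subalgebra so produced is genuinely polarized in the sense of \S\ref{subsec pol and res}, i.e. of maximal dimension among all subalgebras subordinate to $\ell$ (not just among $\sigma$-stable ones). This is part of the content of \cite[Proposition 1.1.2]{V} / Fact \ref{fact dim}: a Vergne polarization attached to an ideal chain always attains the maximal dimension $\dim \U(F) - \dim(U(F)\cdot\ell)/2$, regardless of the involution. So the only thing the involution buys us is the extra $\sigma$-stability, at no cost in dimension. I do not expect a serious obstacle here; the one point requiring a little care is the sign bookkeeping for how $\sigma$ acts on $\ell$ versus on the form $b_\ell$, and making sure the inductive construction in \cite[Proposition 1.1.2]{V} really does preserve $\sigma$-stability at each stage — but this is routine once the $\sigma$-stable refining chain of ideals is in hand, and it is precisely what \cite[Lemme 2.2.1]{B1} already records over $\R$.
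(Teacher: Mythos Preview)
Your proposal is correct and follows exactly the route the paper takes: invoke \cite[Lemme 2.2.1]{B1}, which produces a $\sigma$-stable Vergne polarization for $\ell$ vanishing on $\U(F)^+$, and observe that the argument carries over to any characteristic-zero field because it rests on \cite[Proposition 1.1.2]{V}. The paper does not spell out the Vergne construction or the existence of a $\sigma$-stable ideal flag as you do, so your write-up is strictly more detailed; one tiny slip is that $b_\ell(\sigma x,\sigma y)=-b_\ell(x,y)$ rather than $+b_\ell(x,y)$, but this anti-invariance still forces each radical in the chain to be $\sigma$-stable, so the argument is unaffected.
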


We also observe that it follows from \cite[Lemme 2.2.1, a)]{B1} that Kirillov's decomposition can be chosen $\sigma$-stable. 

\begin{lemma}\label{lemme dec stable}
Assume that $\Z(F)$ has dimension one. Suppose that $\sigma:U\mapsto U$ is an $F$-rational involutive group homomorphism, and denote by $\sigma:\U\mapsto \U$ its differential at 
identity. Then one can choose $\Vect_F(\x)$, $\Vect_F(\y)$, $\Vect_F(\z)$ and $\W(F)$ to be $\sigma$-stable, hence $\U_0(F)$ as well.
\end{lemma}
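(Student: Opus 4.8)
The plan is to invoke Kirillov's structure theory for nilpotent Lie algebras equipped with a rational involution. Since Lemma \ref{lm polarisation stable} already produces a $\sigma$-stable polarization, the real content here is an equivariant refinement of Kirillov's decomposition, and the key is that the construction in \cite[Lemme 2.2.1, a)]{B1} is compatible with $\sigma$. I would proceed as follows.

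First I would choose $\z$. Since $\Z(F)$ is one-dimensional and $\sigma$-stable, $\sigma$ acts on $\Z(F)$ by $\pm 1$; in either case every nonzero vector spans a $\sigma$-eigenline, so we may pick $\z$ spanning $\Z(F)$ to be a $\sigma$-eigenvector, hence $\Vect_F(\z)$ is $\sigma$-stable. Next I would produce $\y$. By Kirillov's lemma there exists $\y$ with $\z \in [\U(F),\y]$ and whose centralizer $\U_0(F)$ is a codimension one ideal; the point is that one may arrange $\y$ to be a $\sigma$-eigenvector. For this I would follow the argument in \cite[Lemme 2.2.1]{B1}: starting from any admissible $\y_0$, the centralizer $\U_0(F) = \{v : [v,\y_0]=0\}$ need not be $\sigma$-stable, but $\sigma(\y_0)$ is again an admissible choice (it centralizes a codimension one ideal and does not centralize $\z$, up to sign), and using that the set of admissible vectors is Zariski dense in a suitable hyperplane complement one can pick $\y$ in the intersection, or directly symmetrize: $\y^+$ or $\y^-$ will still satisfy $[\U_0(F),\y^{\pm}]=0$ for the common centralizer once it is checked that at least one of them does not centralize $\z$. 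Then $\U_0(F)$, being the centralizer of the $\sigma$-eigenvector $\y$, is automatically $\sigma$-stable, and $\Vect_F(\y)$ is $\sigma$-stable by construction.

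Then I would find $\x$. By definition $\x$ can be taken in a complement of $\U_0(F)$ with $[\x,\y]=\z$; since $\U_0(F)$ is $\sigma$-stable and codimension one, and $\sigma(\z)=\e\z$ with $\e=\pm 1$, the element $\x' := \tfrac{1}{2}(\x + \e\,\sigma(\x))$ if $\e=+1$ (resp. $\tfrac12(\x-\sigma(\x))$ analysis in the $\e=-1$ case) still satisfies $[\x',\y]=\z$ because $[\sigma(\x),\y]=[\sigma(\x),\sigma(\y)]=\sigma([\x,\y])=\e\z$ when $\sigma(\y)=\y$, so one gets a $\sigma$-eigenvector $\x$ with $[\x,\y]=\z$ and $\x\notin\U_0(F)$; one must also double-check that this symmetrization does not accidentally land $\x$ in $\U_0(F)$, which follows because $[\x',\y]=\z\neq 0$. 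Finally $\W(F)$: it is a complement of $F\y\oplus F\z$ inside $\U_0(F)$ lying in the centralizer of $\y$, but that centralizer is all of $\U_0(F)$, so I just need a $\sigma$-stable complement of the $\sigma$-stable subspace $F\y\oplus F\z$ inside the $\sigma$-stable space $\U_0(F)$; such a complement always exists because $\sigma$ is semisimple (its minimal polynomial divides $T^2-1$ and we are in characteristic zero), so decompose into $\pm$-eigenspaces and take complements within each. Then $\U_0(F)=F\y\oplus F\z\oplus\W(F)$ is $\sigma$-stable as required.

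I expect the main obstacle to be the second step, constructing a $\sigma$-eigenvector $\y$ that is simultaneously \emph{admissible} in Kirillov's sense (centralizing a codimension one ideal and pairing nontrivially with $\z$ under the bracket). The naive symmetrizations $\y^+,\y^-$ of an admissible $\y$ need not individually be admissible, so the argument must either appeal to the explicit inductive construction in \cite[Lemme 2.2.1, a)]{B1} — which is exactly why the paper cites that precise item — or use a density/genericity argument to choose an admissible vector inside a $\sigma$-stable subspace. Once $\y$ is pinned down $\sigma$-equivariantly, everything else is linear algebra with the involution $\sigma$, using only that it is a semisimple operator over a field of characteristic zero. I would therefore structure the proof as: reduce to \cite[Lemme 2.2.1, a)]{B1} for the existence of the $\sigma$-stable pair $(\y,\U_0(F))$, then fill in $\z$, $\x$ and $\W(F)$ by the eigenvalue bookkeeping sketched above.
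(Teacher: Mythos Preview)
Your proposal is correct and follows the paper's own argument: the paper too invokes \cite[Lemme 2.2.1, a)]{B1} to produce a $\sigma$-stable two-dimensional ideal containing $\Z(F)$ (from which the $\sigma$-eigenvector $\y$ is read off as a basis of a stable complement of $\Z(F)$), then symmetrizes to obtain $\x$ and uses semisimplicity of $\sigma$ to find the stable complement $\W(F)$. The only slip is that your averaging for $\x$ is written assuming $\sigma(\y)=\y$; in general, if $\sigma(\y)=\eta\y$ and $\sigma(\z)=\e\z$ then $[\sigma(\x),\y]=\eta\e\z$, so one should take $\x'=\tfrac12(\x+\eta\e\,\sigma(\x))$, which is a $\sigma$-eigenvector of eigenvalue $\eta\e$ with $[\x',\y]=\z$---exactly the ``parity of $\y$ and $\z$'' remark in the paper.
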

\begin{proof}
Clearly $\Vect_F(\z)=\Z(F)$ is $\sigma$-stable, hence $\z=\z^+$ or $\z=\z^-$. Then by \cite[Lemme 2.2.1, a)]{B1} there is a $\sigma$-stable $2$-dimensional ideal of $\U(F)$ containing $\Z(F)$, in which $\Z(F)$ has a stable complement because $\sigma$ is semi-simple. We take $\y$ a basis of it, so that $\y=\y^+$ or $\y=\y^-$. Now take $\x$ in $\U(F)$ such that $[\x,\y]=\z$. Because $\sigma$ is a Lie algebra homomorphism we can suppose that either $\x=\x^+$ or $\x=\x^-$ according to the parity of $\y$ and $\z$, and we do so. 
From the proof of \cite[Lemma 4.1]{K}, the subspace of $\U_0(F)$ of $\U(F)$ consists of vectors which commute with $\y$, so it is also $\sigma$-stable. Finally $\Vect_F(\y,\z)$ is a $\sigma$-stable subspace of $\U_0(F)$, hence it admits a $\sigma$-stable complement which we take to be $\W(F)$. 
\end{proof}

\section{Irreducible representations of $U(k_v)$}

We recall that $k$ is a number field, and for a place $v$ of $k$, we denote by $k_v$ the completion of $k$ at this place. Hence $k_v$ is either $\R$, $\C$, or an extension of $\Q_p$ for some prime number $p$. When $E$ is a finite dimensional $\R$-vector space, we denote by 
$\S(E)$ denotes the usual Schwartz space of rapidly decreasing functions from $E$ to $\C$. For uniformization of notations, when $v$ is finite and $E$ is a finite dimensional vector space over $k_v$, we denote by $\S(E)$ the space of smooth functions with compact support in $E$ and values in $\C$. More generally in the non Archimedean case, if $M$ is a complex vector space, we denote by $\S(E, M)$ the space of smooth functions from $E$ to $M$ with compact support in $E$.

\subsection{Kirillov's classification for continuous unitary representations}

We fix $F=k_v$ for some place $v$ of $k$, and $\psi:F\mapsto \C_u$ a non-trivial unitary 
character of $F$. If $\ell \in \U(F)^*$ and $\V(F)$ is a Lie sub-algebra of 
$\U(F)$ subordinate to $F$, then 
\[\Psi_{\ell,V}:v\in  V(F)\mapsto \psi(\ell(\ln(v)))\] is a unitary character of $V(F)$. Note that $V(F)$ and $U(F)$ are both unipotent, and we fix a right $U(F)$-invariant measure on $V(F)\backslash U(F)$. One can then form the 
$L^2$-induced representation 
\[\Ind_{V(F)}^{U(F), L^2}(\Psi_{\ell,V})=\{f:U(F)\rightarrow \C, \ |f| \in L^2(V(F)\backslash U(F)), \lambda(v)f=\psi(v)f\},\] where $\lambda$ stands for left translation. Note that the choice of a complementary basis $\B$ of $\V(F)$ identifies 
\[\Ind_{V(F)}^{U(F), L^2}(\Psi_{\ell,V})\simeq L^2(F^d)\] where the isometric isomorphism is given by 
\[J_{\B,\V}:f\mapsto f\circ I_{\B,\V}.\] 

Kirillov's results in \cite{K} are stated for simply connected nilpotent real Lie groups which are well-known to be algebraic over $\R$, i.e. of the form $U(\R)$ for $F=\R$ (in any case we only need to know that $U(\R)$ is nilpotent and simply connected, which is obvious). It has also been observed in \cite[p.159-160]{Moore65} that Kirillov's classification of irreducible unitary representations on Hilbert spaces holds for unipotent groups defined over for $\Q_p$. From the observations in Section \ref{subsec pol and res}, it follows that that Kirillov's classification is valid over $F$ (one could also argue that Kirillov's proof applies directly over any local field, but we want to avoid this argument). Its content (\cite[Theorem 5.2]{K}) is the following.
 
\begin{thm}\label{thm kir class}
Let $F$ be a local field. 
\begin{enumerate}
\item If $\ell\in \U(F)^*$ and $\V(F)$ is subordinate to $\ell$, then $\Ind_{V(F)}^{U(F), L^2}(\Psi_{\ell,V})$ is irreducible if and only if $(\ell,\V(F))\in \mathcal{P}(\U(F))$.
\item Any continuous irreducible unitary representation of $U(F)$ is of the form 
$\Ind_{V(F)}^{U(F), L^2}(\Psi_{\ell,V})$, for $(\ell,\V(F))\in \mathcal{P}(\U(F))$.
\item If $(\ell,\V(F))$ and $(\ell',\V'(F))\in \mathcal{P}(\U(F))$, then $\Ind_{V'(F)}^{U(F), L^2}(\Psi_{\ell',V'})\simeq \Ind_{V(F)}^{U(F), L^2}(\Psi_{\ell,V})$ if and only if $[\ell']=[\ell]$.
\end{enumerate}
\end{thm}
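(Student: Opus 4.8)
The plan is to deduce Kirillov's classification over a general local field $F$ of characteristic zero from the known cases $F = \R$ (Kirillov's original work) and $F$ a finite extension of $\Q_p$ (as observed by Moore), via restriction of scalars, exactly as advertised in the remark preceding the statement. So first, one reduces to the case where $F$ is an extension of $\Q_p$ (if $F$ archimedean we are in Kirillov's setting directly) and writes $F$ as a finite extension of $K = \Q_p$. Then $U(F) = \Res_{F/K}U(K)$ as a topological group, so continuous irreducible unitary representations of $U(F)$ are precisely the continuous irreducible unitary representations of the unipotent $K$-group $\Res_{F/K}U$, for which the classification of part (2) is available over $K = \Q_p$. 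The point is then that all the objects appearing in the statement --- polarized pairs, coadjoint orbits, induced models --- match up under $\ell \mapsto \ell_K$, which is where Lemmas \ref{lm orbits vs res scal} and \ref{lm pol vs res scal} come in.

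The key steps I would carry out, in order: \textbf{(i)} Observe that $\psi_K := \psi \circ \tr_{F/K}\colon K \to \C_u$ is a nontrivial unitary character of $K$ (nontriviality because $\tr_{F/K}$ is surjective and $\psi$ is nontrivial), and that for a Lie subalgebra $\V(F) = \Res_{F/K}\V(K)$ subordinate to $\ell$ --- which by Lemma \ref{lm pol vs res scal} is the same as $\Res_{F/K}\V(K)$ subordinate to $\ell_K$ --- one has the identity of characters $\Psi_{\ell,V}^{\psi} = \Psi_{\ell_K,V}^{\psi_K}$ on $V(F) = V(K)$, since $\psi(\ell(\ln v)) = \psi(\tr_{F/K}(\ell_K(\ln v) \cdot \text{(scalar)}))$... more precisely $\psi(\ell(\ln v)) = \psi_K(\ell_K(\ln v))$ directly from the definition $\ell_K = \tr_{F/K}\circ \ell$. \textbf{(ii)} Note that, as topological groups, $V(F)\backslash U(F)$ and the corresponding $K$-quotient coincide, the invariant measures agree, and hence $\Ind_{V(F)}^{U(F),L^2}(\Psi_{\ell,V}) = \Ind_{V(K)}^{U(K)_{\mathrm{Res}},L^2}(\Psi_{\ell_K,V})$ as unitary representations of the same topological group. \textbf{(iii)} Now apply Kirillov's theorem over $K = \Q_p$ (from \cite{Moore65}) to the $K$-unipotent group $\Res_{F/K}U$: part (1) gives irreducibility iff $(\ell_K, \Res_{F/K}\V(K)) \in \P(\Res_{F/K}\U(K))$, which by Lemma \ref{lm pol vs res scal} is equivalent to $(\ell,\V(F)) \in \P(\U(F))$; part (2) gives that every continuous irreducible unitary representation of $\Res_{F/K}U(K) = U(F)$ is of the form $\Ind(\Psi_{\ell_K,V})$ for a polarized pair over $K$, but a priori such a polarizing subalgebra $\V(K)$ of $\Res_{F/K}\U(K)$ need not be of the form $\Res_{F/K}\V'$ for an $F$-subalgebra $\V'$; \textbf{(iv)} resolve this last point by invoking Lemma \ref{lm orbits vs res scal}: the orbit $[\ell_K]$ lies in the image of $[\ell] \mapsto [\ell_K]$ from $U(F)\backslash\U(F)^*$, so by part (3) of Kirillov over $K$ we may replace the given polarized pair by one of the form $(\ell_K, \Res_{F/K}\V(F))$ with $\ell \in \U(F)^*$ and $\V(F)$ a $\sigma$-irrelevant $F$-polarization of $\ell$ (whose existence over $F$ is classical, or can itself be obtained by the same restriction-of-scalars trick applied to the existence statement over $K$), giving the desired model over $F$. \textbf{(v)} Finally, part (3): two $F$-models are isomorphic iff the corresponding $K$-models are, iff $[\ell_K] = [\ell'_K]$ by Kirillov over $K$, iff $[\ell] = [\ell']$ by the injectivity half of Lemma \ref{lm orbits vs res scal}.

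The main obstacle I anticipate is step (iv): a polarization of $\ell_K$ inside $\Res_{F/K}\U(K)$ produced by the $\Q_p$-theory is an arbitrary $K$-Lie subalgebra and carries no reason to be $F$-stable (i.e. of the form $\Res_{F/K}\V(K)$ for an $F$-subalgebra). One cannot directly feed it into Lemma \ref{lm pol vs res scal}. The fix is to not insist on identifying the subalgebra but rather to use that the \emph{isomorphism class} of the induced representation depends only on the orbit $[\ell_K]$ (part (3) over $K$), combined with the surjectivity, onto orbits coming from $\U(F)^*$, that is built into Lemma \ref{lm orbits vs res scal}: since by construction $[\ell_K]$ is one such, we are free to re-pick the polarizing subalgebra to be an $F$-rational one (its existence over $F$ being guaranteed by the existence part of the $F$-version, which is exactly what we are allowed to bootstrap), and then the earlier lemmas apply verbatim. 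Everything else is a matter of unwinding definitions and checking that the various identifications ($F$-points versus $K$-points of $\Res_{F/K}$, the trace compatibility of characters, equality of quotient spaces and measures) are compatible, which is routine.
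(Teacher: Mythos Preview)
Your strategy is exactly the paper's: reduce to $K \in \{\R, \Q_p\}$ via restriction of scalars, identify the induced models through $\ell \mapsto \ell_K$ using Lemmas \ref{lm orbits vs res scal} and \ref{lm pol vs res scal}, and resolve the obstacle in part (2) (a $K$-polarization need not be $F$-rational) by invoking part (3) over $K$ to swap it for an $F$-rational one. Two technical points need correcting.

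First, your definition $\psi_K := \psi \circ \tr_{F/K}\colon K \to \C_u$ is ill-typed: $\tr_{F/K}$ goes $F \to K$, so this composition does not land in $K$. The paper proceeds in the opposite direction: it \emph{starts} from a nontrivial $\psi_0\colon K \to \C_u$, sets $\psi := \psi_0 \circ \tr_{F/K}$ on $F$, and then the identity $\Psi_{\ell,V}^{\psi} = \Psi_{\ell_K,V}^{\psi_0}$ is immediate from $\ell_K = \tr_{F/K}\circ \ell$. Since not every nontrivial character of $F$ factors through the trace, one must then handle an arbitrary $\psi$ separately: the paper writes $\psi = \psi'(a\,\cdot)$ with $a \in F^\times$ and $\psi'$ of trace type, observes $\Ind(\Psi_{\ell,V}^{\psi}) = \Ind(\Psi_{a\ell,V}^{\psi'})$, and notes that scaling $\ell$ by $a \in F^\times$ preserves both the polarization condition and the coadjoint orbit structure. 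Your step (i) skips this reduction.

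Second, the dismissal ``if $F$ archimedean we are in Kirillov's setting directly'' is too quick for $F = \C$. Kirillov's theorem is stated for real nilpotent Lie groups, so it classifies the unitary dual of $U(\C)$ in terms of $\R$-coadjoint orbits in $\Res_{\C/\R}\U(\R)^*$, not $\C$-coadjoint orbits in $\U(\C)^*$; the same translation via Lemmas \ref{lm orbits vs res scal} and \ref{lm pol vs res scal} is required there as well.
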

\begin{proof}
The the theorem to is true for $K=\R$ or $\Q_p$ for some prime number $p$, as discussed before its statement. Let $\psi:F\rightarrow \C_u$ be a non-trivial character. We first suppose that it is of the form $\psi_{0,K}:=\psi_0\circ \tr_{F/K}$ for $\psi_0:K\rightarrow \C_u$ a non-trivial character of $K$. Then write $U(F)=\Res_{F/K}U(K)$. First, if $\V(F)$ is a Lie sub-algebra of $\U(F)$ and $\ell\in \U(F)^*$, then the relation $\psi_0\circ \ell_K= \psi \circ \ell$ (where we recall that $\ell_K=\tr_{F/K}\circ 
\ell$) implies that $\Ind_{V(F)}^{U(F), L^2}(\Psi_{\ell,V})=\Ind_{V(F)}^{U(F), L^2}(\Psi_{0,\ell_K,\Res_{F/K}V})$. Hence $\Ind_{V(F)}^{U(F), L^2}(\Psi_{\ell,V})$ is irreducible if and only if $(\ell,\V(F))\in \P(U(F))$ thanks to Lemma \ref{lm pol vs res scal} and this proves the first point of the theorem over $F$ for our particular choice of character. 
Now let $\pi$ be an irreducible representation of $U(F)$ and write it $\Ind_{V_0(K)}^{U(F), L^2}(\Psi_{0,\ell_0,V})$ where 
$\ell_0\in \Res_{F/K}\V(K)^*$. One can write $\ell_0=\ell_K$ thanks to Lemma \ref{lm orbits vs res scal}. No take $\V(F)$ such that $(\ell,\V(F))\in \P(U(F))$, so that $(\ell_0, \Res_{F/K}\V(K)^*)\in \P(\Res_{F/K}U(K))$ thanks to Lemma \ref{lm pol vs res scal}. Then by the third point of the Theorem for $K$, one can write 
$\pi=\Ind_{V(F)}^{U(F), L^2}(\Psi_{0,\ell_0,\Res_{F/K}V})$. It follows that $\pi=\Ind_{V(F)}^{U(F), L^2}(\Psi_{\ell,V})$, which proves the second point of the theorem over $F$, and the third point follows from Lemma \ref{lm orbits vs res scal}. Finally if $\psi:F\rightarrow \C_u$ is non-trivial, it can be written $\psi=\psi'(a \ \bullet)$ for $a\in F^\times$ and $\psi'$ factorizing through $\tr_{F/K}$, and 
$\Ind_{V(F)}^{U(F), L^2}(\Psi_{\ell,V})=\Ind_{V(F)}^{U(F), L^2}(\Psi'_{a^{-1}\ell,V})$, hence Kirillov's classification holds for such $\psi$. 
\end{proof}

The above theorem makes the following notation legit:

\[\pi_2([\ell],\psi)\simeq \Ind_{V(F)}^{U(F), L^2}(\Psi_{\ell,V}),\] where $V$ is any $F$-subgroup of $U$ such that $(\ell,\V(F))\in \mathcal{P}(U(F))$.

\subsection{Smooth irreducible representations}\label{subsec sm rep}

When $F$ is a local field, smooth vectors of (topologically) irreducible infinite dimensional unitary representations of $U(F)$ live in some Schwartz space. For example the smooth vectors of the irreducible infinite dimensional representations of the real Heisenberg group in 3 variables are the Schwartz space of $\R$. Similarly in the non Archimedean case, the smooth vectors are given by the compact induced representation inside the $L^2$-induced representation (the full and compact induced representation coincide). Let us state the details. First for $\pi_2([\ell],\psi)$ as above, we introduce the notation \[\pi([\ell],\psi):=\pi_2([\ell],\psi)^\infty\] for the $U(F)$-invariant subspace of smooth vectors. 

\subsubsection{Smooth Archimedean representations}\label{subsec SAR}

We recall that we have fixed $\psi:F\rightarrow \C_u$ a non-trivial character. For $F$ archimedean we have the following result thanks to \cite[3.1 Corollary]{CGP} and \cite[Theorem 3.4]{Poulsen}.

\begin{thm}\label{thm arch sm}
Let $(\ell,\V(F))$ be a polarized pair for $\U(F)$. We recall that any choice of complementary basis $\B$ of $\V(F)$ in $\U(F)$ gives an isometric isomorphism $J_{\B,\V}:\pi_2([\ell],\psi)\simeq L^2(F^d)$. In fact by restriction, 
the map $J_{\B,\V}$ identifies $\pi([\ell],\psi)$ with the Schwartz space $\S(F^d)$. Moreover $\pi([\ell],\psi)$ determines $[\ell]$. 
\end{thm}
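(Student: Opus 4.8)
The plan is to reduce the archimedean statement to the two known results it cites, namely the description of smooth vectors in monomial representations of nilpotent Lie groups via the Penney--Corwin--Greenleaf--Pukanszky type theory (the reference \cite[3.1 Corollary]{CGP}) and Poulsen's theorem that the $C^\infty$-vectors of an $L^2$-induced representation $\mathrm{Ind}_{V(F)}^{U(F),L^2}(\Psi_{\ell,V})$ are exactly the smooth functions in the induced space that, together with all their derivatives, lie in $L^2(V(F)\backslash U(F))$ (\cite[Theorem 3.4]{Poulsen}). First I would fix the polarized pair $(\ell,\V(F))$ and a complementary basis $\B=(\u_1,\dots,\u_d)$ of $\V(F)$ in $\U(F)$, so that by the discussion preceding the theorem $J_{\B,\V}:f\mapsto f\circ I_{\B,\V}$ is an isometric isomorphism $\pi_2([\ell],\psi)\xrightarrow{\sim}L^2(F^d)$. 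The point is to transport the ``smooth and rapidly decreasing together with all derivatives'' description through the polynomial diffeomorphism $I_{\B,\V}$: because $\Phi_{\B,\V}$ and its inverse are polynomial (Section \ref{subsec compl basis}), the right regular action of the Lie algebra $\U(F)$ on the induced model is carried, under $J_{\B,\V}$, to a representation of $\U(F)$ on $C^\infty(F^d)$ by differential operators with polynomial coefficients whose top-order parts are nondegenerate. Consequently the common smooth vectors for this $\U(F)$-action coincide with the common smooth vectors for the standard action of the Heisenberg-type generators, which is the Schwartz space $\S(F^d)$; this is precisely the content of \cite[3.1 Corollary]{CGP}, and Poulsen's theorem guarantees that these algebraic smooth vectors are the genuine $C^\infty$-vectors of the unitary representation. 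Hence $J_{\B,\V}$ restricts to an isomorphism $\pi([\ell],\psi)\xrightarrow{\sim}\S(F^d)$.

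Next I would check that this identification is independent of the auxiliary data: different choices of $\B$ change $I_{\B,\V}$ by a polynomial automorphism of $F^d$ of the triangular ``shear'' type coming from the flag condition in the definition of a complementary basis, and such automorphisms preserve $\S(F^d)$; likewise replacing the polarization $\V(F)$ by another polarization $\V'(F)$ for the same $\ell$ yields a unitary intertwiner (Theorem \ref{thm kir class}(1)) which on smooth vectors is an explicit integral operator of Fourier type (a partial Fourier transform in the variables along a Lagrangian complement), again preserving the Schwartz space. So $\pi([\ell],\psi)$ is well defined as the subspace of smooth vectors of $\pi_2([\ell],\psi)$ and is a Fr\'echet space of Schwartz type.

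Finally, for the assertion that $\pi([\ell],\psi)$ determines $[\ell]$: this is immediate from Kirillov's classification as recorded in Theorem \ref{thm kir class}. Indeed the Hilbert space completion of the smooth representation $\pi([\ell],\psi)$ recovers $\pi_2([\ell],\psi)$ up to unitary isomorphism (a smooth Fr\'echet representation of moderate growth has a canonical unitary globalization here, or more elementarily $\pi([\ell],\psi)$ is dense in $\pi_2([\ell],\psi)$ and the inner product is recovered), and by Theorem \ref{thm kir class}(3) the isomorphism class of $\pi_2([\ell],\psi)$ determines $[\ell]$; alternatively one observes directly that the central character of $\pi([\ell],\psi)$ together with the action on smooth vectors pins down the coadjoint orbit through $\ell$.

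The step I expect to be the main obstacle is the transport of the smooth-vector description through $I_{\B,\V}$ and checking its independence of the polarization: one has to make precise that the intertwining operators between different polarized models (the Richardson/Penney intertwining integrals) are continuous isomorphisms of the Schwartz-type Fr\'echet spaces, not merely of the Hilbert spaces. This is where \cite[3.1 Corollary]{CGP} does the real work, so the argument is largely a matter of quoting it correctly after setting up the polynomial change of coordinates; the remaining verifications are routine.
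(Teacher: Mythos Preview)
The paper does not give a proof of this theorem; it simply attributes the statement to \cite[3.1 Corollary]{CGP} and \cite[Theorem 3.4]{Poulsen} in the sentence preceding it. Your proposal is a correct and reasonable unpacking of how those two references combine: Poulsen's theorem characterizes the $C^\infty$-vectors of an induced unitary representation, the polynomial coordinates $I_{\B,\V}$ transport this to $F^d$, and \cite{CGP} identifies the resulting space with $\S(F^d)$; for the last clause you reduce to Theorem~\ref{thm kir class}(3) via density of smooth vectors, which is exactly the mechanism the paper later invokes in Remark~\ref{rem Poulsen}. Your second paragraph on independence of the choice of $\B$ and of the polarization $\V$ is not required by the statement (which fixes these choices) and can be omitted, though it is not wrong. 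In short, your approach is the paper's approach, spelled out in more detail than the paper itself provides.
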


\begin{rem}
By \cite[Proposition 2;6]{Bruhat}, the representation $\pi([\ell],\psi)$ is topologically irreducible. In fact by \cite[5.1 Théorème]{DC} any smooth tempered irreducible representation of $U(F)$ is of this form. 
\end{rem}

\subsubsection{Smooth non Archimedean representations}\label{subsec NArep}

Here $F$ is a finite extension of $\Q_p$, and we consider $(\ell,\V(F))\in \P(\U(F))$. In particular 
$\pi_2([\ell],\psi)\simeq \Ind_{U(F)}^{V(F),L^2}(\Psi_{\ell,V})$. Hence $\pi([\ell],\psi)\simeq \Ind_{U(F)}^{V(F)}(\Psi_{\ell,V})$ where $\Ind_{U(F)}^{V(F)}$ denotes the usual smooth induction for smooth representations of $p$-adic groups. We write 
$\ind_{U(F)}^{V(F)}$ for compact induction. The following result is proved in \cite[Theorem 3.6]{Mat20} (we refer to \cite{Mat20} rather than the published version \cite{MatringeBIMS} as the latter states Kirillov's classification of smooth irreducible representations of $U(F)$ in a wrong manner, and contains a few other small inaccuracies, see Appendix \ref{app err} of the present paper for an erratum). We briefly recall the arguments.

\begin{lemma}
With the above notations, the representation \[\Ind_{V(F)}^{U(F)}(\Psi_{\ell,V})\] is irreducible, hence equal 
to \[\ind_{V(F)}^{U(F)}(\Psi_{\ell,V}).\] 
\end{lemma}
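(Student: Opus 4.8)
The plan is to show that the smooth induction $\Ind_{V(F)}^{U(F)}(\Psi_{\ell,V})$ is already irreducible, which by the usual argument (a smooth representation generated by any nonzero vector together with its smooth dual being separated) will force it to coincide with the compact induction $\ind_{V(F)}^{U(F)}(\Psi_{\ell,V})$, since the latter is a nonzero $U(F)$-subrepresentation. So the real content is irreducibility of the full smooth induction. First I would fix a complementary basis $\B=(\u_1,\dots,\u_d)$ of $\V(F)$ in $\U(F)$, so that $J_{\B,\V}$ identifies the underlying space with $\S(F^d)$, and I would record explicitly how $U(F)$ acts in these coordinates: translations by the $t$-variables, multiplication by characters $\psi(\ell(\cdot))$ coming from the $\V(F)$-part of the Mackey decomposition, and the polynomial cocycle twists coming from the group law $\Phi_{\B,\V}$. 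The point is that this is a "Heisenberg-type" action built from translations and multiplications by additive characters of $F$.

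Next I would argue by induction on $d = \dim \U(F) - \dim \V(F)$, i.e. on half the dimension of the coadjoint orbit $[\ell]$ (using Fact \ref{fact dim}). The base case $d=0$ is the one-dimensional character case, which is trivially irreducible. For the inductive step I would peel off one variable: using that $\M(F)\oplus \Vect_F(\u_1,\dots,\u_{d-1})$ is a Lie subalgebra, I would realize the induction in stages, $\Ind_{V(F)}^{U(F)} = \Ind_{U_1(F)}^{U(F)}\Ind_{V(F)}^{U_1(F)}$, where $U_1$ is the codimension-one subgroup corresponding to that subalgebra (which is automatically normal, being codimension one). By induction the inner induction is an irreducible smooth representation $\tau$ of $U_1(F)$; then $\Ind_{U_1(F)}^{U(F)}(\tau)$ is an induction from a normal codimension-one (hence index $F$) subgroup, and I would show that $\tau$ and its $U(F)$-conjugates $\tau^{x(t)}$ are pairwise non-isomorphic (their central/Kirillov parameters differ, because $\u_d$ does not centralize the relevant data — this is exactly where the polarization hypothesis $(\ell,\V(F))\in\P(\U(F))$ enters, via Fact \ref{fact dim} forcing the orbit of the restricted functional to genuinely grow). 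Mackey-type irreducibility criteria for induction from a normal subgroup (available in the smooth $p$-adic setting, cf. the Stone–von Neumann / Mackey machine) then give irreducibility of the full smooth induction.

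The alternative, cleaner route — and probably the one actually used — is to invoke Theorem \ref{thm kir class}(1): the $L^2$-induced representation $\pi_2([\ell],\psi) = \Ind_{V(F)}^{U(F),L^2}(\Psi_{\ell,V})$ is topologically irreducible, and for unipotent $p$-adic groups the smooth vectors of a topologically irreducible unitary representation form an algebraically irreducible smooth representation (this is a general fact about the $p$-adic Kirillov theory, using that the compact induction $\ind_{V(F)}^{U(F)}(\Psi_{\ell,V})$ is admissible and irreducible, and sits densely). Concretely: $\ind_{V(F)}^{U(F)}(\Psi_{\ell,V})$ is a nonzero admissible smooth $U(F)$-module whose $L^2$-completion is the irreducible $\pi_2([\ell],\psi)$; any nonzero smooth subrepresentation of $\Ind_{V(F)}^{U(F)}(\Psi_{\ell,V})$ has $L^2$-closure equal to the whole space, and intersecting back one shows it must contain $\ind_{V(F)}^{U(F)}(\Psi_{\ell,V})$; combined with admissibility of the compact induction one deduces $\Ind = \ind$ and that it is irreducible.

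The main obstacle will be the step identifying $\Ind$ with $\ind$, i.e. ruling out that the smooth induction is strictly larger and decomposable. In the non-Archimedean unipotent setting this is where one must use that $V(F)\backslash U(F)$, while not compact, carries enough structure (via the homeomorphism $I_{\B,\V}$ with $F^d$ and the explicit translation/character action) that a smooth vector with a fixed open compact stabilizer, once it lies in an irreducible-looking subspace, is automatically compactly supported — essentially a Schwartz-space phenomenon mirroring the Archimedean statement of Theorem \ref{thm arch sm}. I expect the write-up to handle this by the induction-in-stages plus normal-subgroup Mackey argument sketched above, as \cite[Theorem 3.6]{Mat20} does, rather than by the softer $L^2$-density argument.
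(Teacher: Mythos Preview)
Your proposal is not wrong, but it misjudges both the direction and the depth of the paper's argument. The paper's proof is three citations and runs in the \emph{opposite} order from your primary plan: first, \cite[Theorem 3.6]{Mat20} gives irreducibility of the \emph{compact} induction $\ind_{V(F)}^{U(F)}(\Psi_{\ell,V})$; second, \cite[Corollary 3.3]{Mat20} (or \cite{VD}) says every smooth irreducible representation of $U(F)$ is admissible, so $\ind$ is admissible; third, the general fact \cite[I, 5.6]{Vbook} that an admissible compact induction automatically coincides with the full smooth induction yields $\ind=\Ind$, hence $\Ind$ is irreducible. That is the entire proof --- no $L^2$ input, no induction in stages.

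Your ``alternative, cleaner route'' brushes against this (you do invoke admissibility of $\ind$ and the conclusion $\ind=\Ind$), but you wrap it in $L^2$-closure and density arguments that the paper neither needs nor uses. Your primary route --- prove $\Ind$ irreducible directly via induction in stages and a Mackey criterion, then deduce $\ind=\Ind$ because $\ind$ is a nonzero subrepresentation --- reverses the paper's logic and would in practice require more than you sketch: you would have to verify that $(\ell_{|\U_1(F)},\V(F))$ remains polarized in the intermediate $\U_1(F)$, and more to the point the smooth Mackey machine for induction from a normal subgroup of infinite index most naturally yields irreducibility of $\ind$, not of $\Ind$, so you would essentially be reproving \cite[Theorem 3.6]{Mat20} and still need the admissibility bridge to reach $\Ind$. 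The ``Schwartz-space phenomenon'' you flag as the main obstacle is precisely what \cite[I, 5.6]{Vbook} packages abstractly: admissibility of $\ind$ forces every smooth vector in $\Ind$ to already have compact support modulo $V(F)$.
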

\begin{proof}
The statement of \cite[Theorem 3.6]{Mat20} refers directly to the compactly induced representation, i.e. states that $\ind_{V(F)}^{U(F)}(\Psi_{\ell,V})$ is irreducible. However it is mentioned at the end of 
\cite[Section 2]{Mat20} that whenever $\ind_{V(F)}^{U(F)}(\Psi_{\ell,V})$ is admissible, then it is equal to $\Ind_{V(F)}^{U(F)}(\Psi_{\ell,V})$, and we refer to \cite[Chapitre I, 5.6]{Vbook} for a proof of this fact. On the other hand the admissibility of any smooth irreducible representation of $U(F)$, and in particular of $\ind_{V(F)}^{U(F)}(\Psi_{\ell,V})$, is proved in \cite[Corollary 3.3]{Mat20} (and in \cite{VD} in a slightly different manner)
\end{proof}

\begin{rem}\label{rem mat}
By \cite[Theorem 3.6]{Mat20}, any smooth irreducible representation of $U(F)$ is of the form $\pi([\ell],\psi)$ for a unique $[\ell]$.
\end{rem}

From this it follows that given a basis $\B$ complementary to $\V(F)$ in $\U(F)$, the map 
$J_{\B,\V}$ induces an isomorphism $J_{\B,\V}:\ind_{U(F)}^{V(F)}(\Psi_{\ell,V})\simeq \S(F^d)$, where we recall that $d$ is the codimension of $\V(F)$ inside $\U(F)$. 

\subsubsection{Local tensor products}

Here $k$ a number field, and we set $F=k$. Let $S$ be a set of places of $k$, and fix $\psi_v:k_v\rightarrow \C_u$ a non-trivial character. We set $k_S=\prod_{v\in S} k_v$ and set \[\psi_S:=\otimes_{v\in S}\psi_v:k_S\rightarrow \C_u.\]
For each $v\in S$ we consider $(\ell_v,\V_v(k_v))\in \P(\U(k_v))$. We set \[V_{k_S}:=\prod_{v\in S} V_v(k_v),\] 
\[\ell_S:=\prod_{v\in S} \ell_v: \prod_{v\in S} \V_v(k_v) \rightarrow k_S,\] and 
\[\Psi_{\ell_S, V_S}:= \psi_S \circ \ell_S \circ \ln: V_{k_S}\rightarrow \C_u.\] 
We also denote by $d_v$ the codimension of 
$\V_v(k_v)$ inside $\U(k_v)$. We denote by 
\[\pi_2([\ell_S],\psi_S):=\widehat{\otimes}^2 \pi_2([\ell_v],\psi_v)\] the irreducible unitary representation of the locally compact group 
$U(k_S):=\prod_{v\in S} U(k_v)$ obtained as the completed tensor product of representations $\pi_2([\ell_v],\psi_v)$ (see \cite[Section 6]{Moore65}). Because $\widehat{\otimes}^2_{v\in S}L^2(k_v^{d_v})\simeq L^2(\prod_{v\in S} k_v^{d_v})$, it follows that 
\[\pi_2([\ell_S],\psi_S)\simeq \Ind_{V_{k_S}}^{U(k_S),L^2}(\Psi_{V_S,\ell_S}).\] We denote by 
\[\pi([\ell_S],\psi_S):=\pi_2([\ell_S],\psi_S)^\infty\] the subspace of $\pi_2([\ell_S],\psi_S)$ consisting of smooth vectors.\\

\noindent We denote by $\infty$ the set of infinite places of $k$, and suppose for a moment that $S\subseteq \infty$. It follows from 
Section \ref{subsec NArep} and \cite[(3)]{CGP} that the choice of complementary basis $\B_v$ of $\V_v(k_v)$ provides the identification 
\[J_{\B_S,\V_S}:\pi([\ell_S],\psi_S)\simeq \S(k_S).\] 
Denoting by $\widehat{\otimes}$ the projective tensor product (denoted $\widehat{\otimes}_\pi$ in \cite[Chapter 43]{Tr} and which is shown in \cite[Chapter 50]{Tr} to coincide with the tensor product denoted $\widehat{\otimes}_\epsilon$ in \cite[Chapter 43]{Tr} on nuclear Fréchet spaces). It then follows from \cite[Corollary to Theorem 51.5 and Theorem 51.6]{Tr} that 
\[\pi([\ell_S],\psi_S)\simeq \widehat{\otimes}_{v\in S} \pi([\ell_v],\psi_v).\]

If $S$ is not contained in $\infty$, then we denote by $S_\infty$ its intersection with $\infty$ and by 
$S_\mathrm{fin}$ the set of finite places it contains. In this case 
\[\pi([\ell_S],\psi_S)\simeq [\widehat{\otimes}_{v\in S_\infty} \pi([\ell_v],\psi_v)]\otimes_{v\in S_{\mathrm{fin}}} \pi([\ell_v],\psi_v),\] and we denote again by $\widehat{\otimes}_{v\in S}  \pi([\ell_v],\psi_v)$ this tensor product. 

\subsection{Local distinction}\label{sec local dist}

Here we recall results from \cite{B1}, \cite{B2} and \cite{Mat20} about $U^+(k_v)$-distinguished representations of $U(k_v)$. In the non archimedean case, we also express the local invariant linear forms as local periods for square-integrable representations. We thus fix $F$ to be a local field, and denote by $\sigma$ an $F$-rational involution of $U(F)$.

\begin{df}
Let $\pi$ be a smooth irreducible representation of $U(F)$, we say that $\pi$ is $U^+(F)$-distinguished if the space 
$\Hom_{U^+(F)}(\pi,\C)$ is not reduced to zero, where we request the elements of $\Hom_{U^+(F)}(\pi,\C)$ to be continuous with respect to the Fréchet topology on $\pi$ when $F$ is archimedean. 
\end{df}

We have the following result. 

\begin{thm}[{\cite[Proposition 2.3]{B1}, \cite[Corollaire 4.4.2]{B2}, \cite[Proposition 5.1]{Mat20}}]\label{thm dist et mult 1}
A smooth representation $\pi$ of $U(F)$ is $U^+(F)$-distinguished if and only if $\pi^\vee=\pi^\sigma$, where $\pi^\vee$ stands for the contragredient of $\pi$ and $\pi^\sigma$ for $\pi\circ \sigma$. Moreover if $\pi$ is distinguished, then 
$\dim(\Hom_{U^+(F)}(\pi,\C))=1$. 
\end{thm}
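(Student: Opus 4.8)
The plan is to reduce this purely local statement to the structural results already at hand, chiefly Kirillov's classification (Theorem \ref{thm kir class}, Theorems \ref{thm arch sm} and the non-archimedean analogue in Section \ref{subsec NArep}), together with the compatibility of polarizations with $\sigma$-stability (Lemmas \ref{lm polarisation stable} and \ref{lemme dec stable}). First I would fix a topologically irreducible $\pi = \pi([\ell],\psi)$ and observe that $\pi^\sigma$ and $\pi^\vee$ are again of this form: indeed $\pi^\sigma \simeq \pi([{}^\sigma\!\ell],\psi)$ where ${}^\sigma\!\ell = \ell \circ \sigma^{-1}$, and the contragredient $\pi^\vee \simeq \pi([-\ell],\psi)$ (equivalently $\pi([\ell],\bar\psi)$), by running through the induced model and dualizing the character. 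Hence the condition $\pi^\vee = \pi^\sigma$ translates, via the bijectivity part of Kirillov's classification, into the orbit equality $[-\ell] = [{}^\sigma\!\ell]$ in $U(F)\backslash\U(F)^*$, i.e. $-\ell$ and $\ell\circ\sigma$ lie in the same coadjoint orbit.

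Next I would establish the ``only if'' direction by an averaging/positivity-free argument: if $0 \ne \lambda \in \Hom_{U^+(F)}(\pi,\C)$, then for $w$ in the space of $\pi^\vee$ one forms the matrix coefficient $u \mapsto \langle \pi(u)v, w\rangle$ and uses $\lambda$ to produce a $U^+(F)$-invariant functional on $\pi \otimes \pi^\vee$ or, more directly, observes that $\lambda \circ \pi(\sigma(\cdot))$ gives a $U^+(F)$-invariant functional on $\pi^\sigma$, so $\pi^\sigma$ is also distinguished; combining with the self-duality of distinction (the contragredient of a distinguished representation is distinguished, because one can transport $\lambda$ through a $U^+(F)$-invariant pairing between $\pi$ and a model of $\pi^\vee$) forces $\pi^\vee \simeq \pi^\sigma$. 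For the ``if'' direction, assuming $[-\ell] = [{}^\sigma\!\ell]$ I would first normalize, replacing $\ell$ within its orbit, so that $\ell$ is trivial on $\U(F)^+$ (this is where one must check that the orbit condition permits such a representative — it is the heart of the matter) and then invoke Lemma \ref{lm polarisation stable} to pick a $\sigma$-stable polarization $\V(F)$, realize $\pi$ on the induced model $\mathrm{Ind}_{V(F)}^{U(F)}(\Psi_{\ell,V})$, and exhibit the invariant functional explicitly as (a suitably convergent version of) $f \mapsto \int_{(V(F)\cap U^+(F))\backslash U^+(F)} f(u)\,du$, checking convergence using the complementary-basis coordinates of Section \ref{subsec compl basis} and the $\sigma$-stability to see that the relevant integral is over an affine space on which $f$ restricts to a Schwartz (or compactly supported smooth) function.

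The multiplicity-one assertion I would obtain by a Gelfand--Kazhdan / Bruhat-theoretic argument adapted to the unipotent setting: decompose $\Hom_{U^+(F)}(\pi,\C)$ along the $U^+(F)$-orbits on $V(F)\backslash U(F)$ (there is essentially one relevant orbit by the $\sigma$-stable choice of $\V(F)$ and the fact that a complementary basis can be taken compatibly with the involution, cf. the discussion after Lemma \ref{lm polarisation stable}), and show each orbit contributes at most one dimension; alternatively, for square-integrable $\pi$ in the non-archimedean case one identifies the functional with a genuinely convergent period integral and uniqueness follows from irreducibility of $\pi$ as in \cite{Mat20}. The main obstacle I anticipate is the normalization step in the ``if'' direction: turning the abstract orbit equality $[-\ell]=[\ell\circ\sigma]$ into the concrete statement that $\ell$ may be chosen vanishing on $\U(F)^+$, which requires a cohomological or fixed-point argument for the $\sigma$-twisted coadjoint action (one wants a $\sigma$-anti-fixed point in the orbit), and then making the formal period integral genuinely convergent rather than merely formally invariant. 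Everything else — the identification of $\pi^\sigma$ and $\pi^\vee$ with representations $\pi([{}^\sigma\!\ell],\psi)$ and $\pi([-\ell],\psi)$, and the passage between the orbit language and the representation-theoretic language — is bookkeeping on top of Kirillov's classification.
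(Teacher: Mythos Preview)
The paper itself does not prove this statement: it is quoted verbatim from \cite{B1}, \cite{B2}, \cite{Mat20}, with the explicit shape of the invariant functional recorded in the subsequent Proposition~\ref{prop explicit local linear forms} and part of the non-archimedean argument reproduced in Appendix~\ref{app err}. So there is no in-text proof to compare against directly; what follows compares your outline to the arguments in those references.

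Your ``if'' direction and your multiplicity-one sketch are on the right track and match the references: normalize $\ell$ so that $\ell|_{\U(F)^+}=0$ via the fixed-point lemma \cite[Lemma~4.2]{Mat20} (this is exactly the ``cohomological or fixed-point argument'' you anticipate, and it is restated in the appendix as Lemma~\ref{lemme existence de polarisations stables}), choose a $\sigma$-stable polarization by Lemma~\ref{lm polarisation stable}, and write down the integral functional; multiplicity one then follows from a double-coset analysis of $\Hom_{U^+(F)}\bigl(\ind_{V(F)}^{U(F)}(\Psi_{\ell,V}),\C\bigr)$.

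The ``only if'' direction, however, contains a genuine gap. You argue that $\pi$ distinguished implies both $\pi^\sigma$ and $\pi^\vee$ distinguished (both true), and then assert this ``forces $\pi^\vee\simeq\pi^\sigma$''. It does not: many representations can be simultaneously $U^+(F)$-distinguished without being isomorphic, and transporting the single functional $\lambda$ around does not manufacture a $U(F)$-intertwiner between $\pi^\sigma$ and $\pi^\vee$. In the references this implication is instead a by-product of the same Mackey/Bruhat computation that yields multiplicity one: analysing invariant functionals on $\ind_{V(F)}^{U(F)}(\Psi_{\ell,V})$ along the double cosets $V(F)\backslash U(F)/U^+(F)$, nonvanishing singles out a coset whose contribution forces (a conjugate of) $\ell$ to vanish on $\U(F)^+$. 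Once one has $\ell'\in[\ell]$ with $\ell'|_{\U(F)^+}=0$, the identity ${}^\sigma\ell'=-\ell'$ is immediate (since $\sigma$ acts by $\pm1$ on $\U^\pm$), whence $\pi^\sigma\simeq\pi([-\ell'],\psi)\simeq\pi^\vee$. Thus ``distinguished $\Rightarrow$ conjugate self-dual'' is not a soft self-duality trick but a consequence of the orbit analysis; your outline has the logical dependence reversed and omits this step.
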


We will need in a crucial way to express in an explicit manner the local invariant linear form on the induced Kirillov model of $\pi$ for our global application. The following is a consequence of \cite[Proposition 2.3]{B1} and \cite[Proof of Theorem 5.3]{Mat20}.

\begin{prop}\label{prop explicit local linear forms}
Suppose that $\ell \in \U(F)^*$ vanishes on $\U(F)^+$ and that $(\ell,\V(F))\in \P(\U(F))$ with $\V(F)$ a $\sigma$-stable Lie subalgebra of $\U(F)$. Set $\pi=\Ind_{V(F)}^{U(F),L^2}(\Psi_{\ell,V})^\infty$. Then the space $\Hom_{U^+(F)}(\pi,\C)$ 
is spanned by the following linear form, defined by absolutely convergent integrals:
\[\lambda_{\ell,V}:=f\mapsto \int_{V^+(F) \backslash U^+(F)} f(u) du.\]
\end{prop}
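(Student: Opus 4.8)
The plan is to establish two things: that the integral defining $\lambda_{\ell,V}$ converges absolutely on $\pi=\Ind_{V(F)}^{U(F),L^2}(\Psi_{\ell,V})^\infty$, and that it is a nonzero $U^+(F)$-invariant linear form; invariance under left translation by $U^+(F)$ is then immediate by change of variables, and the fact that the full Hom-space is one-dimensional is already given by Theorem \ref{thm dist et mult 1}. The first step is to make the integration domain concrete. Since $\V(F)$ is $\sigma$-stable, it decomposes as $\V^+(F)\oplus\V^-(F)$, and likewise $\U(F)=\U^+(F)\oplus\U^-(F)$; choosing a complementary basis $\B$ of $\V(F)$ in $\U(F)$ that respects the $\sigma$-grading (possible by running the inductive construction of Section \ref{subsec compl basis} inside the $\pm$ eigenspaces, exactly as in Lemma \ref{lemme dec stable}), one gets a basis $(\u_1,\dots,\u_{d})$ in which $d^+$ of the vectors lie in $\U^+(F)$ and $d^-=d-d^+$ in $\U^-(F)$. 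The homeomorphism $I_{\B,\V}$ restricted to the $U^+$-directions parametrizes $V^+(F)\backslash U^+(F)$ by $F^{d^+}$, and under the isomorphism $J_{\B,\V}:\pi\xrightarrow{\sim}\S(F^{d})$ the linear form $\lambda_{\ell,V}$ becomes, up to the Jacobian of $\Phi_{\B,\V}$ which is a nonzero constant since $\Phi_{\B,\V}$ is a polynomial isomorphism with polynomial inverse, integration of a Schwartz function over the coordinate subspace $F^{d^+}\subseteq F^{d}$.

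With this description convergence is clear: in the archimedean case $J_{\B,\V}(f)\in\S(F^d)$ restricts to a Schwartz function on $F^{d^+}$, which is integrable; in the $p$-adic case $J_{\B,\V}(f)$ has compact support. The remaining, and genuinely substantive, point is nonvanishing. Here I would follow \cite[Proof of Theorem 5.3]{Mat20} and the argument of \cite[Proposition 2.3]{B1}: one argues by induction on $\dim(\U(F))$, using the Kirillov decomposition of Section \ref{sec kirillov dec} — chosen $\sigma$-stable via Lemma \ref{lemme dec stable} — to split off the Heisenberg part. The inductive mechanism expresses $\pi$, via restriction to the codimension-one ideal $U_0$ and Mackey theory / the Kirillov ``going down'' step, in terms of a representation $\pi_0$ of $U_0(F)$ attached to a smaller data $(\ell_0,\V_0(F))$ with $\ell_0$ still vanishing on $\U_0(F)^+$ (this uses that $\ell$ kills $\U(F)^+$, so that in particular the central character is trivial on $Z^+$, and that the relevant restrictions stay in the $\sigma$-trivial locus), and one checks that the period $\lambda_{\ell,V}$ factors through $\lambda_{\ell_0,V_0}$ composed with an integration over the ``new'' $U^+$-variable coming from the Heisenberg triple $(\x,\y,\z)$, in a way compatible with the parametrization above. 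The base case is abelian $U$, where the statement is a trivial Fourier-analytic fact: the period is, in coordinates, $\widehat{J_{\B,\V}(f)}$ evaluated at an appropriate point, hence not identically zero.

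The main obstacle I anticipate is the bookkeeping in the inductive step: one must track precisely how the $\sigma$-stable polarization $\V(F)$ and the $\sigma$-stable Kirillov decomposition interact — in particular whether $\x$ (equivalently $\y$) is $\sigma$-even or $\sigma$-odd, as this dictates whether the extra Heisenberg variable survives into $U^+$ or not, and correspondingly whether the convergence/nonvanishing analysis at that stage is a one-variable integral over $F$ of a Schwartz (or locally constant compactly supported) function against an additive character, or a pointwise restriction. In each case the relevant one-variable statement is elementary (nonvanishing of a Fourier transform of a nonzero Schwartz function, or of a nonzero smooth compactly supported function), but assembling these across the induction while keeping the explicit ``integrate over $V^+\backslash U^+$'' form of the functional — rather than some abstractly characterized generator of the one-dimensional Hom-space — is where the care is needed. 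Once nonvanishing is secured, one-dimensionality from Theorem \ref{thm dist et mult 1} forces $\Hom_{U^+(F)}(\pi,\C)=\C\cdot\lambda_{\ell,V}$, completing the proof.
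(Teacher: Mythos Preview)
The paper does not give its own proof of this proposition: it is stated as a direct consequence of \cite[Proposition 2.3]{B1} and \cite[Proof of Theorem 5.3]{Mat20}, with no further argument. Your proposal is precisely an unpacking of those two references --- convergence via a $\sigma$-adapted complementary basis identifying the period with integration of a Schwartz function over a coordinate subspace (this is the content of \cite[Proposition 2.3]{B1}), and nonvanishing by induction on $\dim(\U)$ through a $\sigma$-stable Kirillov decomposition (this is \cite[Theorem 5.3]{Mat20}) --- so you are following exactly the route the paper points to, just with the details filled in.
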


\begin{rem}
In fact \cite[Proposition 2.3]{B1}, \cite[Corollaire 4.4.2]{B2} and \cite[Theorem 5.3]{Mat20} tell us that any distinguished representation of $U(F)$ can be written as an induced representation of the form given in Proposition \ref{prop explicit local linear forms}. A compact statement is that the map \[(\ell)\in U^+(F)\backslash (\frac{\U(F)}{\U^+(F)})^*\mapsto 
\pi([\ell],\psi)\] is injective with image the set of isomorphism classes of irreducible $U(F)^+$-distinguished representations of $U(F)$. 
\end{rem}

Finally, though we will not need it, it is worth noticing that when $F$ is non Archimedean, the local invariant linear forms on \textit{square-integrable} distinguished representations can also be expressed as local periods. First we observe that thanks to \cite[5, Theorem]{VD0}, the smooth irreducible \textit{square-integrable} representations of $U(F)$ are $Z$-compact in the terminilogy of \cite{MT22}. If $\pi$ is a smooth irreducible representation of 
$U(F)$, this means that if $v\in \pi$ and $v^\vee \in \pi^\vee$, then the coefficient map $c_{v^{\vee},v}:u\mapsto v^\vee(\pi(u)v)$ is compactly supported on $U(F)$ mod $Z(F)$. Then the following is a special case of 
\cite[Corollary 3.4]{MT22}.

\begin{prop}
Suppose that $F$ is non Archimedean. Let $\pi$ be a smooth irreducible \textit{square-integrable} distinguished representation of $U(F)$, and fix any vector $v^\vee\neq 0 \in \pi^\vee$, then the up to scalar unique non-zero element of $\Hom_{U^+(F)}(\pi,\C)$ is given by 
\[p_{v^\vee}: v\mapsto \int_{Z^+(F)\backslash U^+(F)} c_{v^\vee,v}(u)du.\]
\end{prop}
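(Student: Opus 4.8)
The plan is to deduce this from the $Z$-compactness of square-integrable representations combined with the general Theorem 3.1.2.5 — here Corollary 3.4 — of \cite{MT22}, which expresses the (unique up to scalar) invariant functional on a $Z$-compact distinguished representation as a period of matrix coefficients over $Z^+(F)\backslash U^+(F)$. Concretely, the first step is to record that for $\pi$ square-integrable, \cite[5, Theorem]{VD0} gives that all matrix coefficients $c_{v^\vee,v}$ are compactly supported modulo $Z(F)$, i.e.\ $\pi$ is $Z$-compact in the sense of \cite{MT22}; this is exactly the hypothesis needed to apply the general machinery, since $Z$ is $\sigma$-stable (being the center, it is canonically stable under the group automorphism $\sigma$) and $Z(F)\subseteq U^+(F)$ or $Z(F)$ meets $U^+(F)$ appropriately — in fact when $\dim \Z = 1$ the central character is either $\sigma$-invariant or anti-invariant, and distinction forces $Z(F)\subseteq Z^+(F)$, so the quotient $Z^+(F)\backslash U^+(F)$ makes sense with the ambient Haar measures.

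Next I would invoke Theorem \ref{thm dist et mult 1} to know $\dim \Hom_{U^+(F)}(\pi,\C) = 1$, so it suffices to exhibit one nonzero element of this space of the stated shape. The candidate is $p_{v^\vee}: v \mapsto \int_{Z^+(F)\backslash U^+(F)} c_{v^\vee,v}(u)\,du$; the integral converges absolutely by $Z$-compactness of $\pi$ (the integrand is compactly supported on $U^+(F)$ mod $Z^+(F)$, being the restriction of a function compactly supported on $U(F)$ mod $Z(F)$). The $U^+(F)$-invariance is the standard coefficient computation: for $w \in U^+(F)$, $c_{v^\vee,\pi(w)v}(u) = c_{v^\vee,v}(uw)$, and a change of variables $u \mapsto uw^{-1}$ on $Z^+(F)\backslash U^+(F)$, using the unimodularity of the unipotent group $U^+(F)$ and invariance of the quotient measure, shows $p_{v^\vee}(\pi(w)v) = p_{v^\vee}(v)$. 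Finally non-vanishing: one argues as in \cite[Corollary 3.4]{MT22}, namely that by distinction $\pi^\vee \simeq \pi^\sigma$, which produces via Theorem \ref{thm dist et mult 1} a nonzero invariant form, and unwinding the isomorphism $\pi^\vee \simeq \pi^\sigma$ shows that the associated form is (a nonzero multiple of) $p_{v^\vee}$ for a suitable $v^\vee$; since $\Hom_{U^+(F)}(\pi,\C)$ is one-dimensional and $p_{v^\vee}$ is a nonzero invariant functional for at least one choice of $v^\vee \neq 0$, and the map $v^\vee \mapsto p_{v^\vee}$ is linear (hence nonzero for \emph{every} $v^\vee \neq 0$, as they all land in the same one-dimensional space and cannot individually vanish — more precisely $v^\vee \mapsto p_{v^\vee}$ is itself an element of $\Hom_{U^+(F)}(\pi^\vee, \C)$ which is nonzero), we conclude.

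The main obstacle is making the last non-vanishing step fully rigorous without just citing \cite[Corollary 3.4]{MT22} as a black box: one must check that the general relative-trace-formula-style argument there — pairing $\pi$ with $\pi^\vee$ via the invariant functionals and showing the resulting distribution on $U^+(F) \times U^+(F)$ is not identically zero — applies verbatim in this unipotent setting. This is routine because $U^+(F)$ is unimodular and the $Z$-compactness makes all integrals converge absolutely, so the interchange of integrals and the positivity/non-degeneracy argument go through; the only genuinely needed input beyond \cite{MT22} is the square-integrable $\Rightarrow$ $Z$-compact implication from \cite{VD0}, and the dimension-one statement from Theorem \ref{thm dist et mult 1}. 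Since the proposition is flagged as something "we will not need," I would keep the proof to essentially these citations plus the short invariance computation, rather than reproving \cite[Corollary 3.4]{MT22}.
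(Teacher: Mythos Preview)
Your bottom-line approach --- deduce $Z$-compactness from \cite[5, Theorem]{VD0} and then invoke \cite[Corollary 3.4]{MT22} --- is exactly what the paper does; its entire proof is the sentence ``the following is a special case of \cite[Corollary 3.4]{MT22}'' preceded by the $Z$-compactness observation.

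Two remarks on your elaboration. First, the aside ``distinction forces $Z(F)\subseteq Z^+(F)$'' is backwards (one always has $Z^+(F)\subseteq Z(F)$); what distinction actually forces is that the central character $\omega_\pi$ is trivial on $Z^+(F)$, which is what makes $c_{v^\vee,v}$ descend to $Z^+(F)\backslash U^+(F)$ and have compact support there. Second, and more substantively, your non-vanishing argument for \emph{every} $v^\vee\neq 0$ does not go through: you correctly observe that $v^\vee\mapsto p_{v^\vee}$ is a nonzero element of $\Hom_{U^+(F)}(\pi^\vee,\C)$, but a nonzero linear functional on an infinite-dimensional space has a large kernel, so this does not prevent $p_{v^\vee}=0$ for many $v^\vee\neq 0$. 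The kernel is only $U^+(F)$-stable, not $U(F)$-stable, so irreducibility of $\pi^\vee$ alone does not help. Getting non-vanishing for \emph{arbitrary} $v^\vee\neq 0$ is precisely the content that \cite[Corollary 3.4]{MT22} supplies (via a pairing/relative-character argument), and you should not expect to recover it from multiplicity one alone. Since you already advocate citing \cite{MT22} directly, this does not affect your final recommendation, but the sketched justification should be dropped.
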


\section{Automorphic representations}

Here we extend Moore's results from $k=\Q$ to a general number field $k$, mainly deducing them from Moore's paper, using restriction of scalars, or sometimes giving slightly simplified proofs. The proofs in \cite{Moore65} however hold directly for $k$. We also discuss the consequences 
of these results for smooth vectors, as well as intertwining operators from global induced representations to the "automorphic space".
Here $k$ is a number field and $O$ its ring of integers, and we set $F=k$, so that $U$ is a $k$-group. We fix $\psi:\frac{\A}{k}\rightarrow \C_u$ to be a non-trivial Hecke character, and denote by $\psi_v$ its local component at the place $v$ of $k$. We denote by $O_v$ the ring of integers of $k_v$ when $v$ is finite, and by $P_v$ its maximal ideal. 

\subsection{Restricted tensor products}

First we want to defined $U_{\A}$ as a restricted direct product, hence we have to fix a compact open subgroup $K_v$ of $U_v:=U(k_v)$ for almost all (meaning all except a finite number) finite 
places $v$ of $k$. One way of doing this, as observed in \cite[Section 5]{Moore65}, to fix a $k$ basis of $\B=(\u_1,\dots,\u_n)$ of $\U(k)$, and to put $K_v:=\exp(\oplus_{i=1}^n O_v \u_i)$. This defines by the Campbell-Hausdorff formula, which is a finite sum with number of terms bounded by a function of $n$, a subgroup of of $U(k_v)$ for almost all places $v$, as soon as the denominators denominators involved in the formula in question are invertible in $O_v$, as well as the coefficients of the $k$-structure of $(\U(k),[\ , \ ])$ with respect to $\B$. 
It is also observed in \cite[Section 5]{Moore65} that changing $\B$ for another $\B'$ will define the same groups $K_v$ for almost all $v$ as the change of variable from $\B$ to $\B'$ is rational, so that $\oplus_{i=1}^n O_v \u_i'=\oplus_{i=1}^n O_v \u_i$ for almost all $v$.  Hence the restricted tensor product $U_\A$ of the groups $U_v$ with respect to the groups $K_v$ is well defined and independent of the choice of $\B$.

Now we fix $(\ell,\V(k))\in \P(\U(k))$. For any place $v$ of $k$, tensoring by $k_v$ produces $(\ell_v,\V_v)\in \P(\U_v)$. It is proved in \cite[Theorem 9]{Moore65}, the proof of which holds for $k$ instead of $\Q$, that for almost all $v$, the representation $\pi_2(\l_v,\psi_v)$ is "spherical". We recall it (with slight variations), instead of deducing it by restriction of scalars to $\Q$. 

\begin{prop}{\cite[Theorem 9]{Moore65}}
For almost all places $v$ of $k$, the vector space $\pi_2([\ell_v],\psi_v)^{K_v}=\pi([\ell_v],\psi_v)^{K_v}$ is a line. 
\end{prop}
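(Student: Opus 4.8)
The plan is to prove that for almost all finite places $v$, the space of $K_v$-fixed vectors in $\pi_2([\ell_v],\psi_v)$ is one-dimensional, by realizing the representation as an $L^2$-induced representation from an explicit polarization and computing the fixed vectors directly in the Kirillov model $L^2(k_v^{d})$. First I would fix the $k$-basis $\B = (\u_1,\dots,\u_n)$ of $\U(k)$ used to define the groups $K_v$, and fix once and for all a polarization $(\ell,\V(k)) \in \P(\U(k))$ together with a complementary basis $\B_0$ of $\V(k)$ in $\U(k)$; after enlarging the finite exceptional set, I may assume that all these choices are "integral" at $v$, meaning that the structure constants of the Lie bracket, the coefficients expressing the complementary basis in terms of $\B$ and vice versa, the denominators appearing in Campbell--Hausdorff, and the values of $\ell$ on the chosen integral lattice all lie in $O_v^\times \cup O_v$ as appropriate, and that $\psi_v$ is trivial on $O_v$ but nontrivial on $P_v^{-1}$ (the standard "conductor zero" condition, which holds for almost all $v$ when $\psi$ is a fixed Hecke character). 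Under these hypotheses, $\V_v := \V(k) \otimes_k k_v$ is a polarization for $\ell_v$ by base change, $K_v = \exp(\bigoplus_i O_v\u_i)$ is a group, and the complementary basis $\B_0$ still works over $k_v$.

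Next I would use the identification $J_{\B_0,\V}: \pi_2([\ell_v],\psi_v) \simeq L^2(k_v^{d})$ coming from the homeomorphism $I_{\B_0,\V}: k_v^d \to V_v(k_v)\backslash U_v$ of Section~\ref{subsec compl basis}, and transport the right translation action of $K_v$ to an explicit action on $L^2(k_v^d)$. The key point is that, because of the integrality assumptions, the decomposition $U(k_v) = V_v(k_v)\cdot\exp(t_1\u_1^0)\cdots\exp(t_d\u_d^0)$ respects the integral structure: the open compact subgroup $K_v$ decomposes compatibly, so that $K_v$ acts on $L^2(k_v^d)$ through translations and multiplications by characters of the form $x \mapsto \psi_v(\text{polynomial in }x,k)$ which, under the conductor and integrality hypotheses, restrict to the \emph{trivial} character on the lattice $O_v^d$ and preserve it setwise. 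Concretely, a function $f \in L^2(k_v^d)$ is $K_v$-fixed if and only if it is invariant under translation by $O_v^d$ (giving periodicity) and pointwise fixed by a family of unitary multiplication operators by additive characters that are trivial on $O_v^d$; unravelling the cocycle one finds that $f$ must be supported on (and constant on cosets of) the lattice $O_v^d$, which pins down the $K_v$-fixed subspace to the single line spanned by the indicator $\mathbf{1}_{O_v^d}$ (after the identification; in the paper's notation $\1_{O_v^d}$, using the defined macro). Since $\pi([\ell_v],\psi_v) = \ind_{V_v}^{U_v}(\Psi_{\ell_v,V_v})$ and this compact induction sits densely in the $L^2$-induction with the same $K_v$-invariants (all $K_v$-fixed vectors being automatically smooth), one gets $\pi_2([\ell_v],\psi_v)^{K_v} = \pi([\ell_v],\psi_v)^{K_v}$, a line.

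The main obstacle I expect is the bookkeeping needed to show that the $K_v$-action in the Kirillov model $L^2(k_v^d)$ really does reduce, for almost all $v$, to the clean combination "translation by $O_v^d$ plus multiplication by characters trivial on $O_v^d$ preserving $O_v^d$". This requires tracking how the Campbell--Hausdorff formula and the polynomial bijection $\Phi_{\B_0,\V}$ interact with the integral lattices: one must verify that for $k \in K_v$ and $u$ in the support region, the "Mackey cocycle" $\Psi_{\ell_v,V_v}\!\big(\text{$V_v$-component of }uk\big)$ is trivial whenever the relevant coordinates lie in $O_v$, which is exactly where the excluded denominators and the conductor condition on $\psi_v$ get used. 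This is the content of Moore's argument in \cite[Theorem 9]{Moore65}; since the statement asserts the proof holds verbatim over $k$, I would either cite it directly or reproduce it by the restriction-of-scalars device of Section~\ref{subsec pol and res}: writing $k_v$ as an extension of $\Q_p$, one has $U(k_v) = \Res_{k_v/\Q_p}U(\Q_p)$, $\pi_2([\ell_v],\psi_v) = \pi_2([(\ell_v)_{\Q_p}],\psi_{0,v})$ by the proof of Theorem~\ref{thm kir class}, and the compact group $K_v$ is a compact open subgroup of the $\Q_p$-points of the restricted group to which Moore's original $\Q$-statement applies, yielding the result after excluding finitely many more places. Either way the dimension-one conclusion follows; the "hard part" is purely the integral normalization, not any representation-theoretic subtlety.
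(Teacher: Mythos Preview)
Your proposal is correct in outline but follows a different path from the paper. The paper argues by \emph{induction on $\dim \U$}, using Kirillov's structural decomposition: if $\Ker(\ell)\cap\Z(k)\neq\{0\}$ one passes to the quotient by a central one-parameter subgroup contained in $\Ker(\ell)$; if $\Ker(\ell)\cap\Z(k)=\{0\}$ (so $\Z$ is a line) one uses the codimension-one ideal $\U_0$ to write $\pi_v=\ind_{U_{0,v}}^{U_v}(\pi_{0,v})$ on $\S(k_v,\pi_{0,v})$, and shows via the Heisenberg relation $[\x,\y]=\z$ together with conductor-zero $\psi_v$ that a $K_v$-fixed $f$ must be supported on $O_v$, invariant under $O_v$-translation, and take values in $\pi_{0,v}^{K'_v}$ with $K'_v=K_v\cap U_{0,v}$, so that $f\mapsto f(1)$ is a bijection $\pi_v^{K_v}\to\pi_{0,v}^{K'_v}$. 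This peels off one variable at a time and avoids unpacking the full $d$-variable cocycle at once.

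Your direct approach---realizing $\pi_2([\ell_v],\psi_v)$ on $L^2(k_v^d)$ and computing $K_v$-invariants in one shot---is essentially Moore's original argument and is valid, but your description of the $K_v$-action as ``translation by $O_v^d$ plus multiplication by characters'' is an oversimplification: the coordinate change $t\mapsto s(t,k)$ coming from rewriting $\exp(t_1\u_1^0)\cdots\exp(t_d\u_d^0)\cdot k$ in $V_v$-coset coordinates is \emph{polynomial} in $t$ (not merely affine), as is the resulting character in the cocycle. The integrality argument still goes through, since these polynomials have $O_v$-integral coefficients for almost all $v$, but this should not be suppressed. Your restriction-of-scalars alternative is also legitimate (indeed the paper explicitly mentions it as the route not taken), but note that Moore's Theorem~9 is a \emph{global} statement about almost all primes of $\Q$, so you must restrict scalars globally via $\Res_{k/\Q}U$ and then decompose $(\Res_{k/\Q}U)(\Q_p)=\prod_{v\mid p}U(k_v)$, checking that both the representation and the compact subgroup factor compatibly for almost all $p$; the purely local restriction $k_v/\Q_p$ you wrote does not by itself give access to Moore's statement.
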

\begin{proof} 
We set $\pi_v:=\pi([\ell_v],\psi_v)$ for any place $v$. If $\Ker(\ell)\cap \Z(k)\neq \{0\}$, we can choose $\B$ in the discussion above such that $\u_1\in \Ker(\ell)\cap \Z(k)\subseteq \V(k)$, and complete it to a basis of $\U(k)$. Note that because 
$u_1$ is central, we have $\exp(t_1\u_1+\sum_{i=2}^n t_i \u_i)=\exp(t_1 \u_1)\exp(\sum_{i=2}^n t_i \u_i)$ for all $t_i\in k_v$. Setting 
$Z_{1,v}:=\exp(k_v \u_1)$, a simple computation shows that the basis $\overline{\B}=(\overline{\u_2},\dots,\overline{\u_n})$ of 
$\U_v/\Z_{1,v}$ satisfies $\exp(\oplus_{i=2}^n k_v\overline{\u_i})=U_v/Z_{1,v}$ and $K'_v:=\exp(\oplus_{i=2}^n O_v\overline{\u_i})=K_v/K_v\cap Z_{1,v}$, as soon as $\exp(\oplus_{i=1}^n O_v\overline{\u_i})$ is a group $K_v$ (i.e. for almost all $v$). In this situation $\pi_v$ is in fact a representation of $U_v/Z_{1,v}$ (because $\ell_v$ is trivial on $k_v.\u_1$) which satisfies 
$\pi_v^{K_v}= \pi_v^{K'_v}$ for lamost all $v$, and we conclude by induction. \\
If $\Ker(\ell)\cap \Z(k)=\{0\}$, then necessarily $\Z(k)$ is one dimensional. In this situation, by \cite[Lemma 5.1]{K}, we can suppose that 
$\V(k)\subseteq \U_0(k)$ and $\ell(\y)=0$. We then take $(\u_2,\dots,\u_{n-d+1})$ to be a basis of $\V(k)$ with $\u_2=\y$ and $\u_3=\z$, complete it to a basis 
$(\u_2,\dots,\u_n)$ of $\U_0(k)$, and set $\u_1=\x$. Setting $\pi_{0,v}=\ind_{\V_v}^{\U_{0,v}}(\Psi_{v,\ell_v,V_v})$ for $v$ a finite place, then $\pi_v=\ind_{\U_{0,v}}^{\V_v}(\pi_{0,v})$, and we can identify the space of $\pi_v$ with $\S(F,\pi_{0,v})$. 
Suppose that $\psi_v$ has conductor zero, which is the case for almost all places $v$. Then if $f\in \S(F,\pi_{0,v})^{K_v}$, it is in particular $O_v$-invariant on $F$, and by \cite[Lemma 3.2 and Equation (2)]{Mat20} we also see that the support of $f$ in $F$ must be contained in the orthogonal of $O_v$ with respect to $O_v$, i.e. in $O_v$. Moreover $f(1)\in \pi_{0,v}^{K_v'}$ where this time 
$K_v':=\exp(\oplus_{i=2}^n O_v \u_i)=K_v\cap U_0$. So we have a bijection $f\mapsto f(1)$ from 
${\pi_v}^{K_v}$ to $ \pi_{0,v}^{K_v'}$, and we conclude by induction again. 
\end{proof}

In particular, for almost all finite place $v$, we can fix $f_0\in \pi_2([\ell_v],\psi_v)^{K_v}$ uniquely determined by $f_0(1)=1$, and this allows one to consider the restricted tensor product 
\[\Pi_2([\ell],\psi)=\widehat{\otimes'}_v^2 \pi_2([\ell_v],\psi_v).\] The notation makes sense as if $\ell'\in [\ell]$ then $\ell'_v\in [\ell_v]$ for all $v$. It is a topologically irreducible unitary representation of $U_{\A}$. We observe that if one fixes $\V(k)$ subordinate to $\ell$ and of maximal dimension, then 
\[\Pi_2([\ell],\psi)\simeq \Ind_{V_\A}^{U_\A,L^2}(\Psi_{\ell,V}),\] where 
\[\Psi_{\ell,V}:v\in V_\A\mapsto \psi(\ell(\ln(v))).\] We now note that 
\cite[Theorem 10]{Moore65} applies with the exact same proof, relying on \cite[Lemma 7.1]{Moore65}. However we deduce it from 
\cite[Theorem 10]{Moore65} restricting sacalars to $\Q$. 

\begin{thm}{\cite[Theorem 10]{Moore65}}\label{thm unique global Kir param}
For $\ell$ and $\ell'$ in $\U(k)^*$, one has $\Pi_2([\ell],\psi)\simeq \Pi_2([\ell'],\psi)$ if and only if $[\ell]=[\ell']$. 
\end{thm}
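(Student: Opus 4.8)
The plan is to reduce the global statement to the already-available local classification (Theorem \ref{thm kir class}, point (3)) and to Lemma \ref{lm orbits vs res scal}, exactly as the surrounding text suggests, by restriction of scalars to $\Q$. The "if" direction is the trivial one: if $[\ell]=[\ell']$ as $U(k)$-orbits, then for every place $v$ of $k$ one has $[\ell_v]=[\ell'_v]$ as $U(k_v)$-orbits (an element of $U(k)$ realizing the coadjointequivalence over $k$ realizes it over $k_v$ too), hence $\pi_2([\ell_v],\psi_v)\simeq \pi_2([\ell'_v],\psi_v)$ for all $v$ by Theorem \ref{thm kir class}(3); at the almost-all places where the spherical vectors are used to form the restricted tensor product these isomorphisms can be normalized to send $f_0$ to $f_0$ (the spherical line is one-dimensional), so the isomorphisms assemble into an isomorphism of the restricted tensor products $\Pi_2([\ell],\psi)\simeq \Pi_2([\ell'],\psi)$.

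For the "only if" direction I would first observe that it suffices to treat the case $k=\Q$: writing $k/\Q$ as a finite extension and using $\Res_{k/\Q}$, the group $U_\A$ (adeles of $k$) is the adele group of $\Res_{k/\Q}U$ over $\Q$, the character $\psi$ becomes $\psi_\Q=\psi_0\circ\tr_{k/\Q}$ for a suitable Hecke character $\psi_0$ of $\A_\Q/\Q$ (or, after twisting by an idele, a general one — handled as in the proof of Theorem \ref{thm kir class}), and $\ell\in\U(k)^*$ becomes $\ell_\Q=\tr_{k/\Q}\circ\,\ell\in\Res_{k/\Q}\U(\Q)^*$. By the compatibility $\psi\circ\ell=\psi_\Q\circ\ell_\Q$ (place by place, using $\tr_{k/\Q}=\sum_{w\mid v}\tr_{k_w/\Q_v}$) together with Lemma \ref{lm pol vs res scal}, the $L^2$-induced model of $\Pi_2([\ell],\psi)$ coincides with that of $\Pi_2([\ell_\Q],\psi_\Q)$, and by Lemma \ref{lm orbits vs res scal} the map $[\ell]\mapsto[\ell_\Q]$ is injective. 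So $\Pi_2([\ell],\psi)\simeq\Pi_2([\ell'],\psi)$ forces $\Pi_2([\ell_\Q],\psi_\Q)\simeq\Pi_2([\ell'_\Q],\psi_\Q)$, and the $k=\Q$ case gives $[\ell_\Q]=[\ell'_\Q]$, whence $[\ell]=[\ell']$.

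It remains to prove the $k=\Q$ case, which is \cite[Theorem 10]{Moore65}; I would invoke it directly (the excerpt explicitly grants that Moore's proof applies), but if one wanted to reprove it the argument is: an abstract isomorphism $T:\Pi_2([\ell],\psi)\to\Pi_2([\ell'],\psi)$ of unitary $U_\A$-representations restricts, for each place $v$, to an isomorphism of the local factors — formally because the restricted tensor product decomposition lets one recover each local constituent as the isotypic piece under $U_{v}$ acting with all other coordinates in their spherical vectors — so $\pi_2([\ell_v],\psi_v)\simeq\pi_2([\ell'_v],\psi_v)$ for all $v$, hence $[\ell_v]=[\ell'_v]$ by Theorem \ref{thm kir class}(3). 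One then has to upgrade "$[\ell_v]=[\ell'_v]$ for all $v$" to "$[\ell]=[\ell']$" over $\Q$; this is the one genuinely arithmetic input, and is exactly the content of \cite[Lemma 7.1]{Moore65} (a Hasse-principle-type statement for coadjoint orbits of unipotent groups over $\Q$: two rational functionals that are conjugate at every place are conjugate globally). I expect this last step — the local-to-global principle for coadjoint orbits — to be the main obstacle if one does not simply cite Moore, since it is where the special structure of $\Q$ (or of a number field) really enters, via the vanishing of a relevant cohomology set for unipotent groups; everything else is bookkeeping with restriction of scalars and the local classification already recalled above.
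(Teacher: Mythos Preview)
Your proposal is correct and follows essentially the same route as the paper: the ``if'' direction is immediate from the local classification, and the ``only if'' direction is reduced to $\Q$ via restriction of scalars (handling the twist of $\psi$ by an element of $k^\times$ exactly as in the proof of Theorem \ref{thm kir class}) and then one invokes \cite[Theorem 10]{Moore65}. Your extra sketch of Moore's own argument---local factors force $[\ell_v]=[\ell'_v]$, then the local-to-global step is \cite[Lemma 7.1]{Moore65}---is accurate and goes slightly beyond what the paper writes, but the overall strategy is identical.
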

\begin{proof}
Clearly if $[\ell]=[\ell']$ then $[\ell_v]=[\ell'_v]$ for all $v$ and $\Pi_2([\ell],\psi)\simeq \Pi_2([\ell'],\psi)$ thanks to 
Theorem \ref{thm kir class}. Conversely if $\Pi_2([\ell],\psi)\simeq \Pi_2([\ell'],\psi)$. Using restriction of scalars from $k$ to $\Q$ as we did from $F$ to $K$ in the proof of Theorem \ref{thm kir class}, we can write $\Pi_2([\ell],\psi)=\Pi_2([(a\ell)_{\Q}],\psi_0)$ for $a\in k^\times$ and $\psi_0:\frac{\A_{\Q}}{\Q}\rightarrow \C_u$ such that $\psi=\psi_{0,k}(a \ \bullet)$. The result follows from \cite[Theorem 10]{Moore65}. 
\end{proof}

\begin{rem}\label{rem Poulsen}
According to Theorem \ref{thm arch sm} and Remark \ref{rem mat}, the condition $\Pi_2([\ell],\psi)\simeq \Pi_2([\ell'],\psi)$ holds if and only if 
$\Pi_2([\ell],\psi)^\infty\simeq \Pi_2([\ell'],\psi)^\infty$. We will use this fact without necessarily mentioning it when used. 
\end{rem}

\subsection{The decomposition of $L^2(U(k)\backslash U_{\A})$}

 According to \cite[Theorem 11]{Moore65}, when $k=\Q$ the space $L^2(U(k)\backslash U_{\A})$ decomposes in a multiplicity free way as the Hilbert direct sum of the irreducible representations introduced above. The lengthy and quite involved proof of this theorem is by induction and uses Mackey theory (but for induced representations of unitary representations from normal subgroups with infinite index, so that there are canonical direct integral decompositions showing up, and which must be compared), in the spirit of inductions that we have been doing before in the paper, and the fact that the base field $k$ is $k=\Q$ plays no role in Moore's proof. Moore also fixes a specific Hecke character of $\A_{\Q}$, which we did not do here, and which does not play any specific role in his proof. Here again we prefer to use his result and restriction of scalars to $\Q$ to generalize the statement of \cite[Theorem 11]{Moore65} to any number field. 

\begin{thm}{\cite[Theorem 11]{Moore65}}\label{thm dec aut}
\[L^2(U(k)\backslash U_{\A})=\underset{[\ell]\in U(k)\backslash \U(k)^*}{\bigoplus} \Pi_2([\ell],\psi).\] 
\end{thm}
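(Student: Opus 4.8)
The plan is to reduce the statement to the case $k=\Q$ by Weil restriction of scalars, and then to invoke \cite[Theorem 11]{Moore65} directly. Concretely, suppose $k/\Q$ has degree $n$, and write $U'=\Res_{k/\Q}U$, a unipotent $\Q$-group with $U'(\Q)=U(k)$ and, more importantly, $U'_{\A_\Q}\simeq U_\A$ as topological groups (here one uses that $\A_\Q\otimes_\Q k\simeq \A_k$ as topological rings, compatibly with the group law). This identification is measure-preserving up to a positive scalar for suitably normalized Haar measures, hence it induces an isometric isomorphism of unitary representations
\[L^2(U(k)\backslash U_\A)\simeq L^2(U'(\Q)\backslash U'_{\A_\Q}).\]
So the left-hand side of the asserted decomposition only depends on $U'$ over $\Q$, and Moore's theorem over $\Q$ applies to it.

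Next I would match up the right-hand sides. Fix the Hecke character $\psi:\A_k/k\to\C_u$. Composing with the trace, $\psi_\Q:=\psi\circ\tr_{\A_k/\A_\Q}$ is a Hecke character of $\A_\Q/\Q$ (after the standard adjustment $\psi=\psi'_{\Q}(a\,\bullet)$ with $\psi'_\Q$ factoring through the trace and $a\in k^\times$, exactly as in the proof of Theorem \ref{thm kir class} and of Theorem \ref{thm unique global Kir param}; the extra twist by $a$ only reparametrizes orbits by $\ell\mapsto a\ell$ and is harmless). By Lemma \ref{lm orbits vs res scal} the map $\ell\mapsto \ell_\Q:=\tr_{k/\Q}\circ\ell$ is a $\Q$-linear bijection $\U(k)^*\to \Res_{k/\Q}\U(\Q)^*$ inducing a bijection $U(k)\backslash \U(k)^*\to U'(\Q)\backslash\Res_{k/\Q}\U(\Q)^*$ on coadjoint orbits. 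Moreover, because the adelic points of the induced data are again obtained by Weil restriction place-by-place — more precisely $V_\A$ for a $\sigma$-irrelevant but subordinate $\V(k)$ of maximal dimension becomes $\Res_{k/\Q}\V(\Q)$ over $\A_\Q$, with $\Psi_{\ell,V}$ corresponding to $\Psi_{\ell_\Q,\Res_{k/\Q}V}$ via $\psi\circ\ell=\psi_\Q\circ\ell_\Q$ — Lemma \ref{lm pol vs res scal} gives that $(\ell,\V(k))\in\P(\U(k))$ iff $(\ell_\Q,\Res_{k/\Q}\V(\Q))\in\P(\Res_{k/\Q}\U(\Q))$. Hence
\[\Pi_2([\ell],\psi)\simeq \Ind_{V_\A}^{U_\A,L^2}(\Psi_{\ell,V})\simeq \Ind_{\Res_{k/\Q}V_{\A_\Q}}^{U'_{\A_\Q},L^2}(\Psi_{\ell_\Q,\Res_{k/\Q}V})\simeq \Pi_2([\ell_\Q],\psi_\Q),\]
so the family $\{\Pi_2([\ell],\psi)\}_{[\ell]\in U(k)\backslash\U(k)^*}$ is precisely the family $\{\Pi_2([\ell'],\psi_\Q)\}_{[\ell']\in U'(\Q)\backslash\Res_{k/\Q}\U(\Q)^*}$ appearing on the right-hand side of Moore's theorem for $U'/\Q$, each with multiplicity one.

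Putting the two sides together: by \cite[Theorem 11]{Moore65} applied to the $\Q$-group $U'$,
\[L^2(U'(\Q)\backslash U'_{\A_\Q})=\bigoplus_{[\ell']\in U'(\Q)\backslash\Res_{k/\Q}\U(\Q)^*}\Pi_2([\ell'],\psi_\Q),\]
a multiplicity-free Hilbert direct sum; transporting this isomorphism back along $L^2(U(k)\backslash U_\A)\simeq L^2(U'(\Q)\backslash U'_{\A_\Q})$ and re-indexing the summands via the bijection $[\ell]\mapsto[\ell_\Q]$ yields exactly the claimed
\[L^2(U(k)\backslash U_\A)=\bigoplus_{[\ell]\in U(k)\backslash\U(k)^*}\Pi_2([\ell],\psi).\]

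The one point that deserves care — and where I expect the only genuine friction — is the verification that the Weil restriction identification $U'_{\A_\Q}\simeq U_\A$ respects the \emph{restricted} product structure, i.e. that the compact open subgroups $K'_v$ used to define $U'_{\A_\Q}$ agree for almost all $v$ with the product of the $K_w$ over the places $w$ of $k$ above $v$. This follows from the integral-model description in the excerpt: choosing a $k$-basis $\B$ of $\U(k)$ and viewing it as a $\Q$-basis of $\Res_{k/\Q}\U(\Q)$ via a $\Z$-basis of $\O_k$, one has $\O_w$-lattice compatibility $\O_k\otimes_{\O}\O_v\simeq\prod_{w\mid v}\O_w$ for almost all $v$, and the Campbell–Hausdorff denominators are invertible at almost all $v$ for both structures simultaneously; Moore's observation (recalled in the excerpt) that the $K_v$ are independent of $\B$ up to finitely many places then makes the matching basis-independent. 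Everything else — measure normalizations, the harmless scalar twist by $a\in k^\times$, the bijection on orbits — is routine given the lemmas already established.
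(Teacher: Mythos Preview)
Your proposal is correct and follows exactly the paper's strategy: reduce to $\Q$ via Weil restriction of scalars, invoke Moore's theorem there, and transport back using the identification $\Pi_2([\ell],\psi)=\Pi_2([(a\ell)_\Q],\psi_0)$ established in the proof of Theorem~\ref{thm unique global Kir param}. Your write-up is in fact considerably more detailed than the paper's two-line proof, in particular in spelling out the compatibility of the restricted-product structures under restriction of scalars, which the paper leaves implicit.
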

\begin{proof}
It follows at once from \cite[Theorem 11]{Moore65}, using the equality $\Pi_2([\ell],\psi)=\Pi_2([(a\ell)_{\Q}],\psi_0)$ as in the proof of 
Theorem \ref{thm unique global Kir param}.
\end{proof}

\subsection{Smooth vectors in irreducible components of $L^2(U(k)\backslash U(\A))$}

We denote by $\S(U(k)\backslash U(\A))$ the space of smooth vectors in $L^2(U(k)\backslash U(\A))$. By definition a vector 
$\varphi \in L^2(U(k)\backslash U(\A))$ if it is fixed by a compact open subgroup of $U(\A_f)$ and the map $u_\infty\in U(k_\infty)\mapsto 
\rho(u_\infty)\varphi\in L^2(U(k)\backslash U(\A))$ is smooth. For $\ell\in \U(k)^*$, we set 
\[\Pi([\ell],\psi):=\Pi_2([\ell],\psi)\cap \S(U(k)\backslash U(\A)).\] It is a smooth Fréchet space as well as dense subspace of 
$\Pi_2([\ell],\psi),$ and it it is the restricted completed tensor product of the local representations $\pi([\ell_v],\psi_v)$:
\[\Pi([\ell],\psi)\simeq \widehat{\otimes}_v' \pi ([\ell_v],\psi_v).\]

\subsection{Intertwining operators}

In this section we define intertwining operators from $\Pi([\ell],\psi)$ to $\S(U(k)\backslash U(\A))$ in the most possible naive way, following the construction of Esienstein series in the reductive case, and call them Richarson intertwining operators following \cite{CGP}. The exact analogues of these intertwiners have been considered and used in \cite{CGP} in the archimedean case, where the authors consider co-compact discrete subgroups of $U(\R)$ when $U$ is defined over $\R$ (and in fact over $\Q$). Note that in their situation Moore's multiplicity one result does not hold. Let $U_m$ be the subgroup of $\GL_m$ consisting of upper triangular unipotent matrices. For the proof of the next proposition, we identify $U$ with a $k$-sbgroup of $U_m$ for some $m \in \mathbb{N}^*$. In particular $U(O):=U\cap U_m(O)$ is a co-compact 
discrete subgroup of $U(k_\infty)$ contained in $U(k)$, according to \cite{Borel}. Note that the restricted product defining $U_\A$ can be taken with respect to the groups $U(O_v)$ for $v$ finite, and it gives the same definition as before. 

\begin{prop}\label{prop Rich}
Take $(\ell,\V(k))\in \P(\U(k))$, and realize $\Pi([\ell],\psi)$ as 
 $\widehat{\otimes}_v'\pi ([\ell_v],\psi_v)$, where $\pi ([\ell_v],\psi_v)$ is itself realized as $\Ind_{V_v}^{U_v,L^2}(\Psi_{v,\ell_v,V_v})^\infty$. Take $f\in \Pi([\ell],\psi)$. Then the series 
\[\sum_{\gamma \in V(k)\backslash U(k)} f(\gamma \ \bullet )\] converges normally on compact subsets of $U_{\A}$ (in particular on any compact $C$ such that $U_{\A}=U(k)C$), and the function 
\[\varphi_f : u \mapsto \sum_{\gamma \in V(k)\backslash U(k)} f(\gamma u )\] belongs to $\S(U(k)\backslash U_{\A})$. Morever the map 
\[\Ri:f\mapsto \varphi_f \] is an $U_{\A}$-intertwining operator.
\end{prop}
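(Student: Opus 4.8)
The plan is to establish, in order, (1) normal convergence of the series on compacts, (2) membership of the limit in the smooth automorphic space, and (3) equivariance. For (1), the key point is that by construction $\V(k)$ is subordinate to $\ell$ and of maximal dimension, so $V(k)\backslash U(k)$ is a lattice-type quotient; I would fix a complementary basis $\B$ of $\V(k)$ in $\U(k)$, use the polynomial homeomorphism $I_{\B,\V}$ of Section \ref{subsec compl basis} to coordinatize $V(k)\backslash U(k)\hookrightarrow V_\A\backslash U_\A$, and observe that $f\in\Pi([\ell],\psi)$ is a restricted tensor of smooth vectors, hence archimedean-Schwartz in these coordinates at the infinite places and compactly supported and locally constant at the finite places. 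Writing $U_\A = U(k)C$ for a compact $C$ and translating, the sum over $\gamma\in V(k)\backslash U(k)$ of $|f(\gamma u)|$ for $u$ in a compact is dominated by a sum over a lattice in $k_\infty^d\times\prod_{v\mid\infty^c} k_v^{d}$ of a Schwartz-times-compactly-supported function, which converges normally; the finite-place factors only contribute finitely many nonzero terms for $u$ ranging over a compact (and only the identity coset at the spherical places), and the archimedean factor is handled by the standard rapid-decay estimate for theta-like sums.

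For (2), once $\varphi_f$ is defined it is manifestly left $U(k)$-invariant since reindexing $\gamma\mapsto\gamma\delta$ for $\delta\in U(k)$ permutes the cosets. Square-integrability on $U(k)\backslash U_\A$ follows because the fundamental domain is compact, so $\varphi_f$, being continuous there by the normal convergence, is bounded hence $L^2$. Smoothness: right translation by $U(\A_f)$ — for a small enough compact open subgroup $K$ fixing $f$ (which exists since $f$ is a restricted tensor), $\varphi_f$ is $K$-fixed because each summand is and the sum converges; and the map $u_\infty\mapsto\rho(u_\infty)\varphi_f$ is smooth into $L^2(U(k)\backslash U_\A)$ because $\rho(u_\infty)\varphi_f=\varphi_{\rho(u_\infty)f}$ by the same reindexing, $u_\infty\mapsto\rho(u_\infty)f$ is smooth into $\Pi([\ell],\psi)$ by definition of the smooth vectors there, and $f\mapsto\varphi_f$ is continuous from $\Pi([\ell],\psi)$ to $L^2(U(k)\backslash U_\A)$ — this last continuity is exactly what the normal-convergence estimate in step (1) yields (it bounds the sup-norm of $\varphi_f$ on a fundamental domain, hence its $L^2$-norm, by a continuous seminorm on $\Pi([\ell],\psi)$).

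For (3), the intertwining property $\Ri(\rho(u)f)=\rho(u)\Ri(f)$ for $u\in U_\A$ is the reindexing identity $\sum_\gamma (\rho(u)f)(\gamma\,\bullet)=\sum_\gamma f(\gamma\,\bullet\,u)$, valid termwise and preserved under the normally convergent sum; combined with the continuity of $\Ri$ just established this gives a continuous $U_\A$-morphism. I expect the main obstacle to be step (1): making the domination precise requires simultaneously controlling the archimedean Schwartz decay in the lattice coordinates (where the subtlety is that $V(k)\backslash U(k)$ sits inside $V_\A\backslash U_\A$ as a lattice whose shape depends on the chosen basis and the embedding into $U_m$) and checking that the non-archimedean factors genuinely truncate the sum to a finite set as $u$ varies over a compact — this is where the co-compactness of $U(O)$ in $U(k_\infty)$ from \cite{Borel} and the integrality choices defining $K_v$ are used. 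Everything after that is formal manipulation of the resulting uniform bound.
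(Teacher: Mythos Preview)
Your approach is essentially the same as the paper's: both use a complementary basis of $\V(k)$ to parametrize $V(k)\backslash U(k)$ by $k^d$, truncate the sum via the compact support of the finite-place factors to a fractional ideal $J^d$, and then reduce to an archimedean estimate. The paper makes one step more explicit than you do: rather than invoking a ``standard rapid-decay estimate for theta-like sums'' at $\infty$, it packages the surviving index set $I_{\B,\V}(J^d)$ inside $(\Gamma\cap V(k_\infty))\backslash\Gamma$ for a discrete co-compact subgroup $\Gamma\supseteq U(O)$ of $U(k_\infty)$, and then cites \cite[(4)]{CGP}, which is exactly the normal-convergence result for Richardson series over co-compact lattices in real nilpotent groups. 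Your phrase ``theta-like sums'' undersells the issue slightly, since the coordinate map $I_{\B,\V}$ and the right $U(k_\infty)$-action are polynomial rather than linear, so the uniform domination on a compact is not the abelian Poisson-type estimate but precisely the content of \cite{CGP}; you correctly flag this as the main obstacle, and the paper resolves it by citation rather than reproving it.

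For parts (2) and (3) your argument is in fact more detailed than the paper's, which stops after establishing normal convergence and leaves membership in $\S(U(k)\backslash U_\A)$ and the intertwining property implicit. Your deduction of archimedean smoothness via the continuity of $f\mapsto\varphi_f$ (sup-norm on a fundamental domain controlled by a Schwartz seminorm) is the right way to fill that in.
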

\begin{proof}
To prove this statement we can take $f$ of the form $f_\infty\otimes f_{\mathrm{fin}}$ where the notation is self-explanatory. Let 
$C$ be a compact subset of $U_\A$. We want to show that the series of the statement converges normally on $C$, and for this we can suppose $C$ to be of the form $C_\infty\times C_{\mathrm{fin}}$ up to enlarging it. Because 
$f_{\mathrm{fin}}$ only has a finite number of right translates under $C_{\mathrm{fin}}$, it is enough to show that the series \[S(\bullet):=\sum_{\gamma \in V(k)\backslash U(k)} f_{\infty}(\gamma \ \bullet )f_{\mathrm{fin}}(\gamma)\] converges normally on $C_\infty$. Now we fix  
a complementary basis $\B$ of $\V(k)$ inside $\U(k)$. A system of representatives of $V(k)\backslash U(k)$ is then given by 
$\Phi_{\B,\V}(0,k^d)$. For each finite place $v$, the map $\Phi_v=f_v\circ \Phi_{\B,\V}(0,\bullet)$ has compact support in 
$F_v^d$, contained in $O_v^d$ for almost all $v$. From this we deduce that one can index the sum defining $S$ by $I_{\B,\V}(J^d)$ for $J$ 
a fractional ideal of $O$. For each finite place $v$, there is a compact open subgroup $K_v$ of $U_v$ containing $\Phi_{\B,\V}(0,J^d)_v$, which we take to contain $U(O_v)$ for all $v$, and equal to it for almost all $v$. The subgroup 
\[\Gamma:=\{u\in U(k), \ \forall v \in S_{\mathrm{fin}},\ u_v \in K_v\}\] of $U(k)$ thus contains $U(O)=\{u\in U(k), \ \forall v \in S_{\mathrm{fin}},\ u_v \in U(O_v)\}$ with finite index, hence is 
discrete and co-compact inside $U(k_\infty)$. Moreover $I_{\B,\V}(J^d)$ identifies with a subset of $\Gamma\cap V(k)\backslash \Gamma$, and we can take the summation in the series defining $S$ on $\Gamma\cap V(k)\backslash \Gamma=\Gamma\cap V(k_\infty)\backslash \Gamma$ to prove normal convergence. The result now follows from \cite[(4)]{CGP}. 
\end{proof}

\section{Symmetric periods}

Now $U$ is again defined over a number field $k$ and we denote by $\sigma$ a $k$-rational involuton of $U$. We recall that we denote by $p^+$ the period $p^+:\varphi \mapsto \int_{U^+(k)\backslash U^+(\A)} \varphi(u^+)du^+$ on 
$\S(U(k)\backslash U(\A))$. 

\begin{df} 
If $\Pi$ is an irreducible subspace of $L^2(U(k)\backslash U_\A)$, we say that the representation 
$\Pi$ is distinguished if $p^+$ does not vanish on $\Pi^\infty$. 
\end{df}

We set $\Pi^\sigma=\Pi\circ \sigma$, and denote by $\Pi^\vee$ the contragredient representation of $\Pi$. The local results on distinction and the decomposition of $\Pi$ into a restricted tensor product imply at once the following.

\begin{prop}\label{prop dist implies cdual}
Let $\Pi$ be an irreducible subspace of $L^2(U(k)\backslash U_\A)$ such that $\Pi$ is distinguished, then we have 
$\Pi^\vee\simeq \Pi^\sigma$.
\end{prop}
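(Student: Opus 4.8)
The plan is to reduce the global statement to the local distinction results stated in Theorem \ref{thm dist et mult 1}, using the decomposition of $\Pi$ as a restricted tensor product of local representations together with the compatibility of the contragredient and of $\sigma$ with this decomposition. Concretely, by Theorem \ref{thm dec aut} we may write $\Pi \simeq \Pi_2([\ell],\psi)$ for some $[\ell] \in U(k)\backslash \U(k)^*$, and then $\Pi^\infty \simeq \Pi([\ell],\psi) \simeq \widehat{\otimes}_v' \pi([\ell_v],\psi_v)$. The first step is to observe that if $p^+$ does not vanish on $\Pi^\infty$, then in particular there is a nonzero continuous $U^+(\A)$-invariant linear functional on $\Pi^\infty$, namely $p^+ \circ \Ri$ pulled back via the Richardson intertwiner of Proposition \ref{prop Rich} (or simply $p^+$ itself restricted to $\Pi^\infty$, viewing $\Pi^\infty \subseteq \S(U(k)\backslash U_\A)$).

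The second step is to pass from a global $U^+(\A)$-invariant functional to local $U^+(k_v)$-invariant functionals. Since $\Pi^\infty$ is (a completed/restricted version of) the tensor product of the $\pi([\ell_v],\psi_v)$, a nonzero $U^+(\A)$-invariant functional on $\Pi^\infty$ forces, for each place $v$, the existence of a nonzero $U^+(k_v)$-invariant continuous functional on $\pi([\ell_v],\psi_v)$: one restricts the global functional to the subspace $\pi([\ell_v],\psi_v)\otimes(\otimes_{w\neq v} f_w^0)$ for suitable fixed vectors $f_w^0$ (the spherical vectors at almost all finite places), and the resulting functional on $\pi([\ell_v],\psi_v)$ is $U^+(k_v)$-invariant and nonzero for at least one choice of the $f_w^0$; a small additional argument using irreducibility and the fact that $U^+(k_v)$ acts on the $v$-factor shows it is in fact nonzero for the natural choice, or more simply one argues that if some local factor admitted no invariant functional the global one would vanish. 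Hence each $\pi([\ell_v],\psi_v)$ is $U^+(k_v)$-distinguished, so by Theorem \ref{thm dist et mult 1} we get $\pi([\ell_v],\psi_v)^\vee \simeq \pi([\ell_v],\psi_v)^\sigma$ for every $v$.

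The third step is to reassemble: since contragredient and composition with $\sigma$ both commute with the restricted tensor product (the contragredient of $\widehat{\otimes}_v'\pi_v$ is $\widehat{\otimes}_v'\pi_v^\vee$, and $\sigma$ being $k$-rational acts place-by-place as its local components $\sigma_v$), the local isomorphisms $\pi([\ell_v],\psi_v)^\vee \simeq \pi([\ell_v],\psi_v)^\sigma$ assemble to a global isomorphism $\Pi^\infty{}^\vee \simeq (\Pi^\infty)^\sigma$, and then by Remark \ref{rem Poulsen} (the smooth vectors determine the unitary representation and conversely) this upgrades to $\Pi^\vee \simeq \Pi^\sigma$ at the level of the unitary representations.

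The main obstacle is the descent from the global invariant functional to the local ones, i.e. the second step: one must be careful that a nonzero functional on a (restricted, completed) tensor product really does restrict to a nonzero functional on each tensor factor, which requires knowing that the invariant functional is, up to scalar, a pure tensor of local invariant functionals — this is where the one-dimensionality in Theorem \ref{thm dist et mult 1} is used, ensuring that any $U^+(\A)$-invariant functional on $\widehat{\otimes}_v'\pi([\ell_v],\psi_v)$, if it is nonzero, must be proportional to $\otimes_v \lambda_v$ with each $\lambda_v$ the (unique up to scalar) local invariant functional, so in particular each $\lambda_v \neq 0$. One also needs the topological subtlety that continuity of the global functional is inherited at each place; this is handled exactly as in the local tensor-product discussion of Section \ref{subsec sm rep} together with the nuclearity of the Fréchet spaces involved. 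Everything else is formal manipulation of the Kirillov parametrization.
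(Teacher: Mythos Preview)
Your argument is correct and follows essentially the same route as the paper: restrict the nonzero period $p^+$ on $\Pi^\infty\simeq\widehat{\otimes}_v'\pi([\ell_v],\psi_v)$ to produce nonzero $U^+(k_v)$-invariant functionals on each local factor, invoke Theorem~\ref{thm dist et mult 1} to get $\pi([\ell_v],\psi_v)^\vee\simeq\pi([\ell_v],\psi_v)^\sigma$, and reassemble via Theorem~\ref{thm dec aut} and Remark~\ref{rem Poulsen}. The paper compresses all of this into two sentences, while you spell out more carefully the global-to-local descent and the role of local multiplicity one, but there is no substantive difference in strategy.
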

\begin{proof}
The period $p^+$ induces a nonzero $U^+(k_v)$-linear form on each $\pi_v$ for all places $v$ of $k$. The result now follows from Theorems 
\ref{thm dec aut} and \ref{thm dist et mult 1} (and Remark \ref{rem Poulsen}).
\end{proof}

We will call a local or global representation satisfying the relation of Proposition \ref{prop dist implies cdual} above \textit{conjugate self-dual}. Note that $U^+(k)$ acts naturally on $(\frac{\U(k)}{\U^+(k)})^*$ and we denote by $(\ell)$ the orbit of 
$\ell\in (\frac{\U(k)}{U^+(k)})^*$ for such an action. The following is a consequence of the fixed point result proved in 
\cite[Lemma 4.2]{Mat20}.

\begin{prop}\label{prop param of cdual}
The map $(\ell)\mapsto \Pi_2([\ell],\psi)$ is a bijection from $U^+(k)\backslash (\frac{\U(k)}{\U^+(k)})^*$ to the set of conjugate self-dual irreducible subspaces of $L^2(U(k)\backslash U_{\A})$.
\end{prop}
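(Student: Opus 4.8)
The plan is to combine the global Kirillov parametrization (Theorem \ref{thm dec aut} together with Theorem \ref{thm unique global Kir param}) with the ``linear algebra over $k$'' part of Proposition \ref{prop dist implies cdual} and its local analogues. First I would show that a representation $\Pi_2([\ell],\psi)$ is conjugate self-dual if and only if $[\ell]$ can be represented by some $\ell'\in\U(k)^*$ vanishing on $\U(k)^+$. One direction: if $\ell$ vanishes on $\U(k)^+$, then for every place $v$ the local component $\ell_v$ vanishes on $\U(k_v)^+$, so by Proposition \ref{prop explicit local linear forms} (or simply by Theorem \ref{thm dist et mult 1}) each $\pi_v=\pi([\ell_v],\psi_v)$ is $U^+(k_v)$-distinguished, hence $\pi_v^\vee\simeq\pi_v^\sigma$; taking the restricted tensor product gives $\Pi_2([\ell],\psi)^\vee\simeq\Pi_2([\ell],\psi)^\sigma$. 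Conversely, if $\Pi_2([\ell],\psi)$ is conjugate self-dual, then $\Pi_2([\ell],\psi)^\sigma\simeq\Pi_2([\ell\circ\sigma^{-1}],\psi)\simeq\Pi_2([-\ell\circ\sigma],\psi)$ up to the twist by $\psi$ (one must be a little careful here: $\pi^\sigma$ corresponds to the orbit of $\ell\circ\sigma$, and $\pi^\vee$ corresponds to the orbit of $-\ell$, by the local classification and its global counterpart Remark \ref{rem mat}/Theorem \ref{thm arch sm}), so by Theorem \ref{thm unique global Kir param} we get $[-\ell]=[\ell\circ\sigma]$ as $U(k)$-orbits in $\U(k)^*$, i.e. $\ell\circ\sigma\in U(k)\cdot(-\ell)$.

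The core of the argument is then the fixed point lemma \cite[Lemma 4.2]{Mat20}: if $\ell\circ\sigma$ and $-\ell$ lie in the same $U(k)$-orbit, then that orbit contains a $\sigma$-fixed (equivalently, in the right normalization, a $U^+(k)$-invariant) point, which is exactly an element of the orbit vanishing on $\U(k)^+$. More precisely, replacing $\ell$ by a suitable $U(k)$-conjugate we may assume $-\ell\circ\sigma=\ell$, i.e. $\ell$ vanishes on $\U(k)^+$ (since $\ell(\v)=-\ell(\sigma\v)$ forces $\ell$ to kill the $+1$-eigenspace). Here I would invoke \cite[Lemma 4.2]{Mat20} in the form: the involution $\ell\mapsto-\ell\circ\sigma$ of $\U(k)^*$ acts on the orbit space and a $U(k)$-orbit is fixed if and only if it meets the annihilator of $\U(k)^+$; and the stabilizer/connectedness arguments already used in the proof of Lemma \ref{lm pol vs res scal} show there is no obstruction in passing between the orbit picture and a genuine representative. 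This shows the map $(\ell)\mapsto\Pi_2([\ell],\psi)$ with $\ell$ ranging over $U^+(k)$-orbits in $(\U(k)/\U(k)^+)^*$ surjects onto the conjugate self-dual irreducible subspaces.

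For injectivity, suppose $\ell,\ell'\in(\U(k)/\U(k)^+)^*$ give $\Pi_2([\ell],\psi)\simeq\Pi_2([\ell'],\psi)$. By Theorem \ref{thm unique global Kir param} this forces $[\ell]=[\ell']$ as $U(k)$-orbits in $\U(k)^*$, say $\ell'=u\cdot\ell$ for some $u\in U(k)$. Since both $\ell$ and $\ell'=u\cdot\ell$ vanish on $\U(k)^+$, the class of $u$ modulo the stabilizer $U(k)_\ell$ normalizes the annihilator of $\U(k)^+$; the content of \cite[Lemma 4.2]{Mat20} (again, the ``uniqueness'' half of the fixed point statement) is precisely that any two such $u$ differ by an element of $U^+(k)U(k)_\ell$, so $\ell'\in U^+(k)\cdot\ell$, i.e. $(\ell)=(\ell')$. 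Hence the map is a bijection.

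The main obstacle, and the place requiring the most care, is bookkeeping the twist by $\psi$ and the sign conventions relating $\pi^\vee$, $\pi^\sigma$ and the orbits $[-\ell]$, $[\ell\circ\sigma]$, together with the passage from the orbit-theoretic statement of \cite[Lemma 4.2]{Mat20} to an actual representative vanishing on $\U(k)^+$ — one needs that the stabilizer $U(k)_\ell$ is connected (used already in Lemma \ref{lm pol vs res scal}) and that Kirillov's classification identifies $\Pi_2([\ell],\psi)^\vee$ and $\Pi_2([\ell],\psi)^\sigma$ with the expected orbits place by place. Everything else is a direct assembly of Theorems \ref{thm dec aut}, \ref{thm unique global Kir param}, \ref{thm dist et mult 1} and \cite[Lemma 4.2]{Mat20}.
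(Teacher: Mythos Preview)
Your proposal is correct and follows essentially the same route as the paper: the paper's proof is a one-line reference to \cite[Section 4]{Mat20} (valid over any field of characteristic zero) together with Theorem \ref{thm unique global Kir param}, and what you have written is precisely an unpacking of that reference, using the fixed point lemma \cite[Lemma 4.2]{Mat20} for surjectivity and the companion injectivity statement (cf.\ \cite[Lemma 4.4]{MatringeBIMS}) for the other direction. Your only superfluous detour is proving the ``if'' direction via local distinction: once $\ell$ vanishes on $\U(k)^+$ one has $\ell\circ\sigma=-\ell$ on the nose, so $[\ell\circ\sigma]=[-\ell]$ immediately, without appealing to Theorem \ref{thm dist et mult 1}.
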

\begin{proof}
In view of Theorem \ref{thm unique global Kir param}, this follows from \cite[Section 4]{Mat20}, which is valid over any field of characteristic zero. 
\end{proof}

Finally if $\Pi([\ell],\psi)$ is conjugate self-dual, we can suppose that $\ell$ is trivial on $\U^+(k)$ thanks to Proposition 
\ref{prop param of cdual}, and we can also suppose that $\Pi_2([\ell],\psi)\simeq \Ind_{V_\A}^{U_\A,L^2}(\Psi_{\ell,V})$ where $V$ is $\sigma$-stable thanks to Lemma \ref{lm polarisation stable}. In this induced model, one can consider the $U_\A^+$-invariant linear form 
\[\Lambda_{\ell,V}:f\mapsto \int_{V_\A^+ \backslash U_\A^+} f(u^+) du^+\] on the smooth vectors 
$\Ind_{V_\A}^{U_\A,L^2}(\Psi_{\ell,V})^\infty \simeq \Pi([\ell],\psi)$. It is well-defined and nonzero thanks to Proposition \ref{prop explicit local linear forms}, as it factorizes into the product of the local linear forms $\ell_{\ell_v,V_v}$. Moreover by local multiplicity at most one it spans the space $\Hom_{U_\A^+}(\Pi([\ell],\psi),\C)$ (where we impose the linear forms to be continuous with respect to the Fréchet topology) thanks to Theorem \ref{thm dist et mult 1}. Of course by Theorem \ref{thm dist et mult 1}, if $\Pi$ is an irreducible subspace of $L^2(U(k)\backslash U_{\A})$ such that $\Hom_{U_\A^+}(\Pi([\ell],\psi),\C)\neq \{0\}$, then it is conjugate self-dual.
To summarize:

\begin{prop}
If $\Pi$ is an irreducible subspace of $L^2(U(k)\backslash U_{\A})$, then it is conjugate self-dual if and only if $\Hom_{U_\A^+}(\Pi^\infty,\C)$ is nonzero, in which case it is one dimensional. In this case one can realize $\Pi$ as 
$\Ind_{V_\A}^{U_\A,L^2}(\Psi_{\ell,V})$ where $\ell$ is trivial on $\U^+(k)$ and $V$ is $\sigma$-stable. Moreover the space 
$\Hom_{U_\A^+}(\Pi^\infty,\C)$ is one dimensional, and spanned by $\Lambda_{\ell,V}$. 
\end{prop}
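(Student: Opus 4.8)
The plan is to combine the global decomposition theorem with the two uniqueness/explicitness ingredients already available: the parametrization of conjugate self-dual subspaces by $U^+(k)$-orbits in $(\U(k)/\U^+(k))^*$, and the explicit form of the local invariant functionals. I would first recall that, by Theorem \ref{thm dec aut}, any irreducible $\Pi$ equals some $\Pi_2([\ell],\psi)$, so the statement reduces to proving the equivalence ``$\Pi$ conjugate self-dual $\iff$ $\Hom_{U_\A^+}(\Pi^\infty,\C)\neq\{0\}$'' together with the identification of the functional. The direction ``distinguished $\Rightarrow$ conjugate self-dual'' is already Proposition \ref{prop dist implies cdual}, and the remark preceding the statement notes that if $\Hom_{U_\A^+}(\Pi^\infty,\C)\neq\{0\}$ then $\Pi$ is conjugate self-dual by Theorem \ref{thm dist et mult 1}. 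So the content is the converse and the one-dimensionality.

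So the second step is: assuming $\Pi$ conjugate self-dual, use Proposition \ref{prop param of cdual} to write $\Pi=\Pi_2([\ell],\psi)$ with $\ell$ trivial on $\U^+(k)$, then invoke Lemma \ref{lm polarisation stable} to pick a $\sigma$-stable polarization $\V(k)$ for $\ell$, realizing $\Pi$ as $\Ind_{V_\A}^{U_\A,L^2}(\Psi_{\ell,V})$. On this induced model I would introduce the candidate functional $\Lambda_{\ell,V}:f\mapsto\int_{V_\A^+\backslash U_\A^+}f(u^+)\,du^+$. The key point is that $\sigma$-stability of $\V$ means $V_\A^+\backslash U_\A^+$ makes sense and, after choosing a $\sigma$-stable complementary basis so that the model factors as a (restricted) tensor product over places, $\Lambda_{\ell,V}$ factors as the product $\otimes_v \lambda_{\ell_v,V_v}$ of the local functionals of Proposition \ref{prop explicit local linear forms}. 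Each local factor is nonzero and the integral converges absolutely by that proposition; the near-every-place factors are normalized on spherical vectors, so the Euler product is literally a finite product and hence nonzero and well-defined. This simultaneously shows conjugate self-duality implies distinction (nonvanishing of this functional) and exhibits the functional explicitly.

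For the one-dimensionality of $\Hom_{U_\A^+}(\Pi^\infty,\C)$: here I would use that $\Pi^\infty\simeq\widehat{\otimes}_v'\pi([\ell_v],\psi_v)$ and that each $\pi_v$ carries a one-dimensional space of $U^+(k_v)$-functionals by Theorem \ref{thm dist et mult 1}; a continuous $U_\A^+$-functional restricts on each local factor and, by an argument on restricted tensor products of nuclear spaces together with the near-everywhere spherical normalization, is determined up to scalar by its collection of local components, each forced proportional to $\lambda_{\ell_v,V_v}$. The main obstacle I expect is precisely this last step: making rigorous that a continuous global invariant functional is the restricted tensor product of its local components, i.e. that there is no ``extra'' global invariant functional beyond the Euler product. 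One handles this by a standard density/uniqueness argument --- finite tensor products of local smooth vectors (with spherical vectors outside a finite set) are dense, local multiplicity one pins down the functional on each such finite tensor, and continuity extends it --- but one must be careful with the Fréchet topology at the archimedean places, citing the nuclearity facts already invoked in the local tensor product subsection. Finally, for the reverse implication already flagged, I would simply note that nonvanishing of any element of $\Hom_{U_\A^+}(\Pi^\infty,\C)$ forces each $\pi_v$ to be $U^+(k_v)$-distinguished, whence $\pi_v^\vee\simeq\pi_v^\sigma$ for all $v$ by Theorem \ref{thm dist et mult 1}, and therefore $\Pi^\vee\simeq\Pi^\sigma$ by Theorems \ref{thm unique global Kir param} and \ref{thm dec aut}.
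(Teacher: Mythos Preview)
Your proposal is correct and follows essentially the same route as the paper's argument, which is given in the paragraph immediately preceding the proposition rather than as a separate proof block: use Proposition~\ref{prop param of cdual} to choose $\ell$ trivial on $\U^+(k)$, Lemma~\ref{lm polarisation stable} to choose a $\sigma$-stable polarization, factor $\Lambda_{\ell,V}$ as a product of the local linear forms of Proposition~\ref{prop explicit local linear forms}, and deduce one-dimensionality from local multiplicity one (Theorem~\ref{thm dist et mult 1}). The paper treats the passage from local multiplicity one to global multiplicity one as immediate, whereas you are (rightly) more careful in flagging the density/continuity argument on the restricted tensor product; but there is no substantive difference in strategy. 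One minor point: your initial invocation of Proposition~\ref{prop dist implies cdual} is slightly off target, since that proposition concerns the automorphic period $p^+$ rather than the abstract Hom space, but you immediately give the correct reference (Theorem~\ref{thm dist et mult 1}), so this does not affect the argument.
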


\subsection{The case of the Heisenberg group}

In this section we prove our main result in the case of the Heisenberg group $H_3$. This is not really needed but the key "unfolding" computation for the general case shows up very clearly in the case of the Heisenberg group, hence we think that it can be useful to the reader to treat this crucial special case first. 

\begin{prop}
Let $\Pi$ be an irreducible submodule of $L^2(H_3(k)\backslash H_{3,\A})$, then $\Pi$ is distinguished if and only if 
$\Pi^\vee=\Pi^\sigma$.
\end{prop}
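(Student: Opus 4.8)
The plan is to prove both implications, with the reverse (and harder) direction relying on the explicit unfolding of $p^+$ through the Richardson intertwining operator $\Ri$ of Proposition \ref{prop Rich}. One direction is already done: if $\Pi$ is distinguished then Proposition \ref{prop dist implies cdual} gives $\Pi^\vee\simeq \Pi^\sigma$, i.e. $\Pi$ is conjugate self-dual. So the real content is to show that every conjugate self-dual irreducible subspace $\Pi$ of $L^2(H_3(k)\backslash H_{3,\A})$ is distinguished, i.e. that $p^+$ does not vanish on $\Pi^\infty$.

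So assume $\Pi$ is conjugate self-dual. If $\Pi$ is finite-dimensional (i.e. $\ell$ vanishes on the center, so $\Pi$ is a character of $H_{3,\A}/Z_\A \cong \A^2/k^2$ pulled back) then the statement reduces to the classical fact that a nontrivial additive character of a compact abelian group has vanishing integral while the trivial one does not, combined with the explicit form of the $\sigma$-action on characters; conjugate self-duality forces the character to be $\sigma$-relevant and the period to be nonzero. The main case is $\dim\Z(k)=1$ and $\Pi=\Pi_2([\ell],\psi)$ infinite-dimensional with $\ell(\z)\neq 0$. By Proposition \ref{prop param of cdual} and Lemma \ref{lm polarisation stable} we may assume $\ell$ is trivial on $\U^+(k)$ and pick a $\sigma$-stable polarization $\V(k)$; since $\Z(k)\subseteq \V(k)$ and $\V(k)$ has codimension $1$, after choosing Kirillov coordinates via Lemma \ref{lemme dec stable} we may take $\V = Y Z = L$ the Lagrangian, so that $\Pi\simeq \Ind_{L_\A}^{H_{3,\A},L^2}(\Psi_{\ell,L})$ with smooth vectors $\simeq \S(\A)$ after restricting to the $X$-coordinate. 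The global linear form $\Lambda_{\ell,L}:f\mapsto \int_{L_\A^+\backslash U_\A^+} f(u^+)du^+$ is nonzero on $\Pi^\infty$ because it factors as the product $\prod_v \lambda_{\ell_v,L_v}$ of the local nonzero forms of Proposition \ref{prop explicit local linear forms}.

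The crucial step is then the unfolding: I would show that for $f\in\Pi([\ell],\psi)$ realized in the induced model,
\[
p^+(\Ri(f)) = \int_{U^+(k)\backslash U^+_\A}\ \sum_{\gamma\in L(k)\backslash U(k)} f(\gamma u^+)\, du^+ = c\cdot \Lambda_{\ell,L}(f)
\]
for a nonzero constant $c$ (a measure normalization). The mechanism is the standard Eisenstein-style unfolding: the double coset space $L(k)\backslash U(k)/U^+(k)$ has a single relevant class (the orbits on which $\ell$ is compatible with the $U^+$-integration), the stabilizer being $L(k)\cap U^+(k) = L^+(k)$, so that the sum-integral collapses to $\int_{L^+(k)\backslash U^+_\A} f(u^+)du^+$, which factors through $L_\A^+\backslash U_\A^+$ precisely because $\ell$ is trivial on $\U^+(k)$ and thus $\Psi_{\ell,L}$ is trivial on $L_\A^+$; the quotient $L^+(k)\backslash L^+_\A$ is compact of finite volume, giving the constant $c$. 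Since $\Ri$ is a nonzero $U_\A$-intertwining operator onto (a subspace isomorphic to) $\Pi^\infty$ and $\Lambda_{\ell,L}$ is nonzero, it follows that $p^+$ is nonzero on $\Pi^\infty$. The main obstacle is making the unfolding rigorous: justifying the interchange of the period integral with the (normally convergent on compacta) Richardson series, checking that only one double coset contributes — which uses that $\V=L$ is $\sigma$-stable so $U = L\cdot U^+$ set-theoretically on the relevant piece, with $L\cap U^+=L^+$ — and verifying that the resulting integral over $L^+(k)\backslash U^+_\A$ genuinely descends to and converges on $L_\A^+\backslash U_\A^+$. In the Heisenberg case this is a completely explicit finite computation in the coordinates $h(x,y,z)$, which is exactly why the author treats it first.
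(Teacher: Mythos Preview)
Your overall strategy is exactly the paper's: reduce to the conjugate self-dual case via Proposition \ref{prop dist implies cdual}, normalize $\ell$ to vanish on $\U^+(k)$ via Proposition \ref{prop param of cdual}, pick the Lagrangian $L$ as a $\sigma$-stable polarization via Lemma \ref{lemme dec stable}, and then unfold $p^+\circ\Ri$ to a nonzero multiple of $\Lambda_{\ell,L}$. The one-dimensional case and the convergence justifications are fine.

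However, there is a real gap in your unfolding mechanism. You assert that $L(k)\backslash U(k)/U^+(k)$ has a single relevant class and that this follows from a set-theoretic decomposition $U=L\cdot U^+$ with $L\cap U^+=L^+$. The paper instead splits into two cases according to the parity of $\x$ (recall $\z\in\Z^-$ since $\ell(\z)\neq 0$ and $\ell_{|\U^+}=0$, and $[\x,\y]=\z$ forces $\x$ and $\y$ to have opposite parity). In the case $X=X^+$ one has $U^+=X$, $L^+=\{1\}$, and your double-coset picture is correct: the sum over $X(k)$ and the integral over $X(k)\backslash X_\A$ unfold in the standard way. But in the case $X=X^-$ one has $U^+=Y$ and $L^+=Y$, so $L\cdot U^+=L\neq U$; the double cosets $L(k)\backslash U(k)/U^+(k)$ are in bijection with $X(k)$ and do \emph{not} collapse set-theoretically. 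What kills the nontrivial cosets is a Fourier argument: writing $f(x(t)y(a)u^+)=\psi_Z(ta)f(x(t)u^+)$ from the Heisenberg commutation (using $\ell(\y)=0$), the inner integral $\int_{\A/k}\psi_Z(ta)\,da$ vanishes for every $t\in k^\times$ by orthogonality of characters on the compact group $\A/k$, leaving only the $t=0$ term. This character-orthogonality step is the actual ``unfolding'' in the second case and is not captured by your stabilizer description; without it the argument in that case is incomplete.
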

\begin{proof}
Thanks to Proposition \ref{prop dist implies cdual}, we can suppose that $\Pi$ is conjugate self-dual, and it remains to prove that it is distinguished. Then Proposition \ref{prop param of cdual} tells us that we can suppose that $\Pi$ is of the form $\Pi([\ell],\psi)$ with $\ell_{|\H_3^+(k)}=0$. Moreover by Lemma \ref{lemme dec stable}, we can suppose that $X(k)$, $Y(k)$, and of course $Z(k)$ are $\sigma$-stable. If $\ell_{|\Z(k)=0}$, then $\Pi$ is one dimensional and the result is an easy exercise. Hence we suppose that 
$\Ker(\ell)\cap \Z(k)=\{0\}$. This assumption implies that that $\Z^+(k)=\{0\}\Longleftrightarrow \Z(k)=\Z^-(k)$. The relation 
$[\x,\y]=\z$ then implies that $\x$ and $\y$ are of different parity. Clearly the commutative Lie algebra $\mathcal{L}(k)$ is subordinate to $\ell$ and of maximal dimension for this property, and we write $\Psi$ for $\Psi_{\ell,L}$. In particular 
$\Psi$ is trivial on $L(k)L(\A)^+$. We set $\psi_Z:=t\mapsto \Psi(z(t))$. 

We want to show that $p^+\circ \Ri$ is nonzero on $\Ind_{L_\A}^{H_{\A},L^2}(\Psi)^\infty$ and we will in fact show that it is a nonzero multiple of $\Lambda_{\ell,L}$. 
Indeed, for $f\in \Ind_{L_\A}^{H_{\A},L^2}(\Psi)^\infty$: \[p^+\circ \Ri (f)= \int_{H^+(k)\backslash {H_\A}^+}\sum_{t \in k} f(x(t) h) dh=
\int_{X^+(k) L_\A^+ \backslash H_\A^+}\int_{ X^+(k)L^+(k) \backslash X^+(k)L_{\A}^+}
\sum_{t \in k} f(x(t) l h) dl dh .\]
If $X^+=X$, we obtain 
\[p^+\circ \Ri(f)= \int_{X^+(k) L_\A^+  \backslash H_\A^+}\int_{L^+(k) \backslash X^+(k)L_\A^+ }
 f(l h) dl dh= \int_{L^+(k) \backslash H_\A^+}
 f(h) dh \]
\[=\int_{L_\A^+\backslash H_\A^+}\int_{L^+(k)\backslash L_\A^+}
 f(lh) dldh=c\int_{L_\A^+  \backslash H_\A^+} f( h)  dh\] for some positive $c$. 
If not then $X=X^-$ and $Y^+ =Y$, and we obtain 
\[p^+\circ \Ri(f)=\int_{L_\A^+  \backslash H_\A^+}\int_{L^+(k) \backslash L_\A^+ }
\sum_{t \in k} f(x(t) l h) dl dh=\]
\[=\int_{L_\A^+  \backslash H_\A^+}\int_{Y(k) \backslash Y_\A }
\sum_{t \in k} f(x(t) l h) dl dh=\int_{L_\A^+ \backslash H_\A^+}\int_{a\in \frac{\A}{k}}
\sum_{t \in k} f( x(t) y(a) h) da dh \]
\[=\int_{L_\A^+ \backslash H_\A^+}\int_{a\in \frac{\A}{k}}
\sum_{t \in k} \psi(ta) f( x(t) h) da dh = \int_{L_\A^+  \backslash H_\A^+}
\sum_{t \in k} (\int_{a\in \frac{\A}{k} } \psi(ta) da ) f( x(t) h) dh\] because the sum is uniformly convergent on compact sets. This latter sum equals  
\[=c\int_{L_\A^+ \backslash H_\A^+} f( h)  dh\] for some positive $c$ because $\int_{a\in \frac{\A}{k} } \psi(ta) da=0$ except for $t=0$. 
\end{proof}

\subsection{Symmetric periods for unipotent groups}

Here $U$ is a unipotent group defined over $k$. We do the general unfolding computation to obtain our main result. 

\begin{thm}\label{thm main}
Let $\Pi$ be an irreducible submodule of $L^2(U(k)\backslash U_\A)$, then $\Pi$ is distinguished if and only if 
$\Pi^\vee=\Pi^\sigma$.
\end{thm}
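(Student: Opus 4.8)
The plan is to reduce the general case to the computation already carried out for the Heisenberg group, using Kirillov's structure theory and an induction on $\dim(\U)$. By Proposition \ref{prop dist implies cdual} one direction is free, so assume $\Pi$ is conjugate self-dual; by Proposition \ref{prop param of cdual} we may write $\Pi=\Pi([\ell],\psi)$ with $\ell$ trivial on $\U^+(k)$, and by Lemma \ref{lm polarisation stable} we may choose a $\sigma$-stable polarization $\V(k)$, so that $\Pi$ is realized as $\Ind_{V_\A}^{U_\A,L^2}(\Psi_{\ell,V})$ and the target linear form $\Lambda_{\ell,V}$ is nonzero (it is the product of the local forms $\lambda_{\ell_v,V_v}$ of Proposition \ref{prop explicit local linear forms}). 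The goal is then to show that $p^+\circ \Ri$ is a nonzero scalar multiple of $\Lambda_{\ell,V}$ on $\Ind_{V_\A}^{U_\A,L^2}(\Psi_{\ell,V})^\infty$, which by local uniqueness (Theorem \ref{thm dist et mult 1}) suffices.

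For the unfolding, first handle the two degenerate situations: if $\U$ is abelian (or more generally if $\ell$ is trivial on $\Z(k)$) then $\Pi$ descends to a representation of a proper quotient $U/Z_1$ and one concludes by the inductive hypothesis, exactly as in the proof of the spherical-vector proposition. So assume $\Ker(\ell)\cap \Z(k)=\{0\}$, which forces $\dim \Z(k)=1$; by Lemma \ref{lemme dec stable} choose Kirillov's decomposition $\U(k)=k.\x\oplus k.\y\oplus k.\z\oplus \W(k)$ to be $\sigma$-stable, with codimension-one $\sigma$-stable ideal $\U_0(k)$, and arrange (as in \cite[Lemma 5.1]{K}) that $\V(k)\subseteq \U_0(k)$ and $\ell(\y)=0$. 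The relation $[\x,\y]=\z$ forces $\x$ and $\y$ to have opposite $\sigma$-parity. Now split the period integral $p^+\circ\Ri(f)=\int_{U^+(k)\backslash U_\A^+}\sum_{\gamma\in V(k)\backslash U(k)} f(\gamma u)\,du$ along the Heisenberg factor: the coset space $V(k)\backslash U(k)$ is fibered over $V(k)\backslash U_0(k)$ with fiber indexed by $X(k)\simeq k$ (since $U=U_0 X$), and one isolates the $X$-variable. This is precisely the passage, in the Heisenberg proof, from $\sum_{t\in k} f(x(t)h)$ to the two cases according to whether $X^+=X$ (so the $X$-integral over $X^+(k)\backslash X_\A^+$ collapses the sum) or $X^-=X$, $Y^+=Y$ (so integrating the remaining $Y_\A/Y(k)$ variable produces $\int_{\A/k}\psi(ta)\,da$, killing all terms but $t=0$ by uniform convergence on compacta of the defining series — this uniform convergence is guaranteed by Proposition \ref{prop Rich}). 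In both cases one is left with $c\cdot \int_{U_0^+(k)\backslash U_\A^+}\sum_{\gamma\in V(k)\backslash U_0(k)} f(\gamma u)\,du$ for a positive constant $c$, i.e. the period composed with the Richardson operator attached to the smaller group $U_0$ applied to the restriction $f|_{U_{0,\A}}$, which lies in the induced Kirillov model for $U_0$.

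The main obstacle — and the point requiring the most care — is making this inductive step bookkeeping genuinely rigorous: one must check that the restriction-to-$U_0$ of $f\in\Ind_{V_\A}^{U_\A,L^2}(\Psi_{\ell,V})^\infty$ really sits in $\Ind_{V_\A}^{U_{0,\A},L^2}(\Psi_{\ell,V})^\infty$ with $(\ell|_{\U_0},\V)$ still a $\sigma$-stable polarized pair for $\U_0$ (which follows since $\V$ is subordinate to $\ell$, is $\sigma$-stable, $\V\subseteq\U_0$, and Fact \ref{fact dim} controls dimensions since $\dim(U\cdot\ell)=\dim(U_0\cdot\ell|_{\U_0})+2$), that $\ell|_{\U_0(k)}$ is still trivial on $\U_0^+(k)$, and that the Richardson operator for $U_0$ together with its period is — by the inductive hypothesis applied to $U_0$ — a nonzero multiple of $\Lambda_{\ell|_{\U_0},V}$ on the $U_0$-model. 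Composing the positive constants, $p^+\circ\Ri$ on the $U$-model is then a nonzero multiple of $\Lambda_{\ell,V}$, which is nonzero; hence $\Pi$ is distinguished. One also needs to confirm that the interchange of the (adelic) $U^+$-integration with the arithmetic sum is legitimate, which again rests on the normal convergence on compact sets from Proposition \ref{prop Rich} together with the fact that $U_\A^+$ is covered by $U^+(k)$ times a compact set.
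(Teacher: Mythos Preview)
Your proposal is correct and follows essentially the same approach as the paper's proof: induction on $\dim(\U)$ via the $\sigma$-stable Kirillov decomposition, unfolding $p^+\circ\Ri$ along $V(k)\backslash U_0(k)$ and $X(k)$, and splitting into the two cases according to the parity of $\x$ (with the character integral over $Y^+(k)\backslash Y^+_\A$ killing all but the $\l^-=1$ term when $X=X^-$, exactly as in the Heisenberg computation) to reduce to the $U_0$-period and apply induction. The paper's write-up differs only in minor bookkeeping—most notably it selects $\V$ from the outset as a $\sigma$-stable polarization for $\ell|_{\U_0(k)}$ (by applying Lemma \ref{lm polarisation stable} to $\U_0$) and then observes this is automatically a polarization for $\U$, rather than first choosing $\V$ for $\U$ and then arranging $\V\subseteq\U_0$ as you do.
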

\begin{proof}
We write $\Pi=\Pi([\ell],\psi)$ with $\ell_{|\U^+(k)}=0$, and take $\V(k)$ subordinate to $\ell$ of maximal dimension and $\sigma$-stable, such that $\Pi$ it is induced from $V_\A$. If $\Ker(\ell)\cap \Z(k)\neq \{0\}$ we conclude by induction. Now we suppose that $\Ker(\ell)\cap \Z(k)=\{0\}$. According to Lemma \ref{lemme dec stable}, we can write $U(k)$ as a semi-direct product $U(k)=U_0(k).X(k)$ where both $U_0(k)$ and $X(k)$ are $\sigma$-stable, and we recall that $Y(k)$ can also be chosen $\sigma$-stable, which is what we do. We then choose thanks to 
Lemma \ref{lm polarisation stable} a $\sigma$-stable $\V(k)$ subordinate to $\ell_{|\U_0(k)}$ such that
$(\ell_{|\U_0(k)},\V(k))\in \P(\U_0(k))$. In this case it is automatic that $(\ell,\V(k))\in \P(\U(k))$. We identify 
$\Pi$ with $\Ind_{V_\A}^{U_\A,L^2}(\Psi_{\ell,V})$ and write $\Psi$ for $\Psi_{\ell,V}$. We recall that we want to show that $p^+\circ \Ri$ is a positive multiple of 
$\Lambda_{\ell,V}$ on $\Pi^\infty$. Take $f\in \Pi^\infty$:

\[p^+\circ \Ri(f)=\int_{U^+(k)\backslash U^+(\A)}\sum_{\gamma \in V(k)\backslash U(k)} f(\gamma u^+)du^+\]
\[=
\int_{U_0^+(\A)X^+(k)\backslash U^+(\A)}(\int_{U_0^+(k)X^+(k)\backslash U_0^+(\A)X^+(k)}(\sum_{\gamma \in V(k)\backslash U(k)} f(\gamma u_0^+ u^+))du_0^+)du^+\]
\[=\int_{U_0^+(\A)X^+(k)\backslash U^+(\A)}(\int_{U_0^+(k)\backslash U_0^+(\A)}(\sum_{\gamma \in V(k)\backslash U(k)} f(\gamma u_0^+ u^+))du_0^+)du^+=\]
\[=\int_{U_0^+(\A)X^+(k)\backslash U^+(\A)}(\int_{U_0^+(k)\backslash U_0^+(\A)}(\sum_{\l \in U_0(k)\backslash U(k)} \sum_{\mu \in V(k)\backslash U_0(k)} f(\mu \l u_0^+ u^+))du_0^+)du^+\]
\[=\int_{U_0^+(\A)X^+(k)\backslash U^+(\A)}(\int_{U_0^+(k)\backslash U_0^+(\A)}(\sum_{\l \in X(k)} \sum_{\mu \in V(k)\backslash U_0(k)} f(\mu \l u_0^+ u^+))du_0^+)du^+\]
\[=\int_{U_0^+(\A)\backslash U^+(\A)}(\int_{U_0^+(k)\backslash U_0^+(\A)}(\sum_{\l^- \in X^-(k)} \sum_{\mu \in V(k)\backslash U_0(k)} f(\mu \l^- u_0^+ u^+))du_0^+)du^+\] because we have the direct product decomposition $X(k)=X^+(k)X^-(k)$. 

If $X^-(k)$ is trivial, then the series of equalities continues as 
\[=\int_{U_0^+(\A)\backslash U^+(\A)}(\int_{U_0^+(k)\backslash U_0^+(\A)}(\sum_{\mu \in V(k)\backslash U_0(k)} f(\mu u_0^+ u^+))du_0^+)du^+\] and then by induction 
\[=c\int_{U_0^+(\A)\backslash U^+(\A)}(\int_{V^+(\A)\backslash U_0^+(\A)} f(u_0^+ u^+)du_0^+)du^+=c\int_{V^+(\A)\backslash U^+(\A)} f( u^+) du^+\] for some $c>0$. This terminates the proof in the case $X=X^+(k)$. 

We now treat the case $X(k)=X^-(k)$, and recall that $Y=Y^+(k)$ in this case. For fixed $u^+$ we have  
\[\int_{U_0^+(k)\backslash U_0^+(\A)}(\sum_{\l^- \in X^-(k)} \sum_{\mu \in V(k)\backslash U_0(k)} f(\mu \l^- u_0^+ u^+))du_0^+\]
\[=\int_{U_0^+(k)\backslash U_0^+(\A)}(\sum_{\l^- \in X^-(k)} \sum_{\mu \in V(k)\backslash U_0(k)} (\rho(u^+)f)(\mu \l^- u_0^+))du_0^+\]
As a consequence of Proposition \ref{prop Rich} the function $\sum_{\l^- \in X^-(k)} \sum_{\mu \in V(k)\backslash U_0(k)} (\rho(u^+)f)(\mu \l^- \ \bullet )$ is defined by a series which converges uniformly on compact subsets of $U(\A)$. In particular the function 
\[F:=\sum_{\mu \in V(k)\backslash U_0(k)} (\rho(u^+)f)(\mu \ \bullet)\] is well defined on $U(\A)$ and the series 
$\sum_{\l^- \in X^-(k)} F(\l^- \ \bullet)$ converges uniformly on any compact subset of $U(\A)$. This implies that we can reverse summation and integration in the following:
\[\int_{U_0^+(k)\backslash U_0^+(\A)}\sum_{\l^- \in X^-(k)}F(\l^- u_0^+)=\sum_{\l^- \in X^-(k)}\int_{U_0^+(k)\backslash U_0^+(\A)} F(\l^- u_0^+).\]
Now fix $\l^-\in X^-(k)$. Because $Y(k)$ is central in $U_0(k)$, the set $U_0^+(k)Y^+(\A)$ is a subgroup of $U_0^+(\A)$ and we can write
\[\int_{U_0^+(k)\backslash U_0^+(\A)} F(\l^- u_0^+)d{u_0^+}=
\int_{U_0^+(k)Y^+(\A)\backslash U_0^+(\A)}\int_{Y^+(k)\backslash Y^+(\A)}  F(\l^- y^+u_1^+)d y^+d u_0^+.\] 
Now \[F(\l^- y^+u_1^+)=F((\l^-y^+(\l^-)^{-1})\l^-u_1^+) = \sum_{\mu \in V(k)\backslash U_0(k)} (\rho(u^+)f)(\mu (\l^-y^+(\l^-)^{-1})\l^-u_1^+).\] 
But $\l^-y^+(\l^-)^{-1}$ commutes with $\mu$ as it is in the center of $U_0(\A)$, and for fixed $\mu$ we have 
\[(\rho(u^+)f)(\mu (\l^-y^+(\l^-)^{-1})\l^-u_1^+)=\Psi(\l^-y^+(\l^-)^{-1})(\rho(u^+)f)(\mu \l^-u_1^+).\] 
Hence
$F(\l^-y^+ u_1^+)=\Psi(\l^-y^+(\l^-)^{-1})F(\l^-u_1^+)$ and we observe from the Heisenberg case that the character $y\mapsto 
\Psi(\l^-y^+(\l^-)^{-1})$ is trivial if and only if $\l^-=1$. In particular 
\[\int_{Y^+(k)\backslash Y^+(\A)}  F(\l^- y^+u_1^+)d y^+=0\] except when $\l^-=1$. 
Thus we deduce \[\int_{U_0^+(k)\backslash U_0^+(\A)}\sum_{\l^- \in X^-(k)}F(\l^- u_0^+) du_0^+=\int_{U_0^+(k)\backslash U_0^+(\A)}F(u_0^+) du_0^+\] 
so that \[\int_{U_0^+(k)\backslash U_0^+(\A)}(\sum_{\l^- \in X^-(k)} \sum_{\mu \in V(k)\backslash U_0(k)} f(\mu \l^- u_0^+ u^+))du_0^+
=\int_{U_0^+(k)\backslash U_0^+(\A)}\sum_{\mu \in V(k)\backslash U_0(k)} (\rho(u^+)f)(\mu u_0^+)du_0^+,\] and we conclude as in the case 
where $X^-(k)$ is trivial.
\end{proof}

\subsection{The very strong rigidity property of automorphic representations}

The following result is proved in \cite[Proof of Theorem 10]{Moore65}, we reproduce its proof.

\begin{thm}\label{thm strong rigidity}
Let $\Pi$ and $\Pi'$ be topologically irreducible subspaces of $L^2(U(k)\backslash U_{\A})$. Fix $v$ a place of $k$. Then 
$\Pi'=\Pi$ if and only if ${\pi_v'}^\infty\simeq \pi_v^\infty$.
\end{thm}
\begin{proof}
One direction is obvious. Conversely if $\pi'_v\simeq \pi_v$. Write $\Pi=\Pi([\ell],\psi)$ and 
$\Pi'=\Pi([\ell'],\psi)$, so that $\pi_v^\infty=\pi([\ell_v],\psi_v)$ and ${\pi_v'}^\infty=\pi([\ell_v'],\psi_v)$ where $\ell_v$ and $\ell_v'$ are just obtained from $\ell$ by making them $k_v$-linear via scalar extension. Then by density of smooth vectors in the Hilbert completions of $\pi_{2}([\ell_v],\psi_v)$ and 
$\pi_{2}([\ell_v'],\psi_v)$ respectively, and Theorem \ref{thm kir class}, we deduce that $[\ell_v]=[\ell_v']$, i.e. that $\ell$ and $\ell'$ are conjugate by an element of $U(k_v)$. Now set $X=\{u\in U,\ u\cdot \ell=\ell'\}$ and $U_{\ell}=\{u\in U,\ u\cdot\ell=\ell\}$. The space $X$ is non empty as it has $k_v$-points, hence it is a principal homogenous space under $U$ defined over $k$, and it follows from \cite[Exposé 6, Théorème 3]{Bourbaki} that $X$ has a point over $k$, which means that $[\ell]=[\ell']$. 
\end{proof}

It then follows from Theorem \ref{thm main} and the results on local distinction recalled in Section \ref{sec local dist} that global distinction is detected by any given local component. 

\begin{cor}
$\Pi$ be a topologically irreducible subspace of $L^2(U(k)\backslash U_{\A})$. Fix $v$ a place of $k$. Then 
$\Pi$ is distinguished if and only if $\pi_v^\infty$ is distinguished.
\end{cor}

\section{Appendix: Erratum to \cite{MatringeBIMS}}\label{app err} 

The Kirillov classification in \cite{MatringeBIMS} is misstated, namely \cite[Theorem 3.6, 3)]{MatringeBIMS} is incorrect. We correct it here, as well as the incorrect proof of \cite[Theorem 5.2]{MatringeBIMS} which follows from this misstatement. We also correct \cite[Proof of Corollary 3.3]{MatringeBIMS} which contains minor problem. The full corrected version is \cite{Mat20}.\\ 

\noindent \cite[Proof of Corollary 3.3]{MatringeBIMS} should be replaced by the following:\\

\noindent By induction on $\dim(\mathrm{U})$. If $\dim(\mathrm{U})=1$ it is clear. If not, if either $\dim(\mathrm{Z})\geq 2$ or if $c_\pi$ is trivial, 
then setting $\K=\Ker(c_\pi)$, the group $\overline{\mathrm{U}}=\mathrm{U}/\Ker(c_\pi)$ has dimension smaller than that of $\U$ and we conclude by induction because $\pi$ is a representation of $\overline{\mathrm{U}}$. If $\dim(\mathrm{Z})=1$ and $c_\pi$ is nontrivial we can write $\pi=\ind_{\mathrm{U}_0}^{\mathrm{U}}(\pi_0)$ with $\pi_0$ good, thanks to Proposition 3.1. In this case $\pi_0$ must be irreducible so by induction it is unitary and admissible, from which we already conclude that $\pi$ is unitary. Moreover take a function 
$f\in \ind_{\mathrm{U}_0}^{\mathrm{U}}(\pi_0)\simeq \mathcal{C}_c^\infty(F,V_{\pi_0})$ which is fixed by a compact open subgroup $L$ of $\U$. Then by Equation (2) there is $k\in \mathbb{Z}$ depending on $L$ such that $f$ is an $\w_F^kO_F$-invariant function on $F$, and by Equation (2) it must vanish outside the orthogonal of $\w_F^kO_F$ with respect to $\chi$. Hence $f$ is determined by its values on a finite set $A$ depending on $L$ but not on $f\in \pi^L$, and moreover its 
image is a subset of the finite dimensional space 
$V_{\pi_0}^{L'}$ where $L'=\cap_{a\in A} a^{-1}L a$. This means that $\ind_{\mathrm{U}_0}^{\mathrm{U}}(\pi_0)^L$ has finite dimension so that $\pi$ is admissible.\\

\noindent Then \cite[Lemma 4.3]{MatringeBIMS} and the discussion before it must be repalced by:\\

\noindent We make $\sigma$ act on $\N^*$ by the formula: 
 \[\sigma(\phi)=-\phi^\sigma.\] Then a very special case of \cite[Lemma 4.2]{MatringeBIMS} is:

\begin{lemma}\label{lemme existence de polarisations stables}
Take $\phi\in \N^*$, then $\sigma(\phi)$ and $\phi$ are in the same $\mathrm{U}$-orbit if and only if there is a $\sigma$-fixed linear form in the $\mathrm{U}$-orbit of $\phi$, i.e. a linear form which vanishes on $\N^\sigma$.
\end{lemma}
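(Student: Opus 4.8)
The statement to prove is Lemma~\ref{lemme existence de polarisations stables}: for $\phi\in\N^*$, the linear forms $\sigma(\phi)=-\phi^\sigma$ and $\phi$ lie in the same $\mathrm{U}$-orbit if and only if the $\mathrm{U}$-orbit of $\phi$ contains a $\sigma$-fixed linear form, i.e.\ one vanishing on $\N^\sigma$. (Note a $\sigma$-fixed element of $\N^*$ is exactly one killing the $-1$-eigenspace $\N^-$... here the convention is $\sigma(\phi)=-\phi^\sigma$, so $\sigma$-fixed means $\phi^\sigma=-\phi$, which is equivalent to $\phi$ vanishing on $\N^+=\N^\sigma$ in the sign convention of the paper where $\N^\sigma$ denotes the fixed subalgebra.) One direction is immediate: if some $\phi'\in\mathrm{U}\cdot\phi$ satisfies $\sigma(\phi')=\phi'$, then applying the (anti)action of $\sigma$ to the orbit shows $\sigma(\phi)\in\mathrm{U}\cdot\phi'=\mathrm{U}\cdot\phi$, because $\sigma$ normalizes $\mathrm{U}$ and hence permutes $\mathrm{U}$-orbits, sending $\mathrm{U}\cdot\phi$ to $\mathrm{U}\cdot\sigma(\phi)$.

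\textbf{The substantive direction.} Assume $\sigma(\phi)\in\mathrm{U}\cdot\phi$; we must produce a $\sigma$-fixed point in the orbit. The natural approach is to study the action of the order-two automorphism $\sigma$ on the coadjoint orbit $\Omega=\mathrm{U}\cdot\phi$ and find a fixed point. Since $\sigma$ normalizes $\mathrm{U}$ and fixes the orbit $\Omega$ setwise (by hypothesis, $\sigma(\phi)\in\Omega$, and $\sigma$ permutes orbits), we get an involution of the orbit variety $\Omega$. We want a fixed point of this involution. The orbit $\Omega\cong \mathrm{U}/\mathrm{U}_\phi$ is (as an algebraic variety, and as a set over the field in question) an affine space — it is isomorphic to affine space via the exponential/complementary-basis coordinates of Section~\ref{subsec compl basis}, and $\mathrm{U}_\phi$ is connected. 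Any algebraic involution of affine space over a field of characteristic zero has a fixed point: linearize at the barycenter or, more robustly, average. Concretely I would form the semidirect product $\widetilde{\mathrm U}=\mathrm{U}\rtimes\langle\sigma\rangle$, note $\widetilde{\mathrm U}\cdot\phi=\Omega$ (that is the hypothesis), and pick $\widetilde\phi$ on $\Omega$ whose $\widetilde{\mathrm U}$-stabilizer contains a conjugate of $\sigma$; equivalently, apply the classical fact that for a connected solvable (here unipotent) group acting on itself by conjugation with a finite-order automorphism $\sigma$, the fixed-point group $\mathrm{U}^\sigma$ is a nontrivial "form" meeting every $\sigma$-stable orbit — this is exactly the content of \cite[Lemme 4.2]{MatringeBIMS} / \cite[Lemma 4.2]{Mat20}, of which the present lemma is quoted as a special case. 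So the cleanest route is: invoke \cite[Lemma 4.2]{Mat20} directly, observing that the hypothesis "$\sigma(\phi)$ and $\phi$ in the same $\mathrm{U}$-orbit" says precisely that the $\mathrm{U}$-orbit is $\sigma$-stable, and the conclusion "$\sigma$-fixed form in the orbit" is the assertion that $\mathrm{U}^\sigma\cdot(\text{orbit}\cap (\N^\sigma)^\perp)$ is nonempty — i.e.\ the orbit meets the fixed locus.

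\textbf{Main obstacle.} The genuinely non-formal point is the existence of a $\sigma$-fixed element once the orbit is known to be $\sigma$-stable: this is a fixed-point statement for an involution on a homogeneous space under a unipotent group, and it really does use that $\sigma$ is semisimple and that everything is algebraic over a field of characteristic zero (so $\mathrm{U}^\sigma$ is "big enough" — its Lie algebra is $\N^+$). If I were to prove it from scratch rather than cite \cite[Lemma 4.2]{Mat20}, I would argue by induction on $\dim\mathrm U$ using the central descent already set up in the paper: pick a $\sigma$-stable one-dimensional central ideal $\mathfrak z$ (possible since $\sigma$ is semisimple, as in the proof of Lemma~\ref{lemme dec stable}); if $\phi|_{\mathfrak z}=0$ descend to $\mathrm U/\exp(\mathfrak z)$; if $\phi|_{\mathfrak z}\neq0$, use the Kirillov/Heisenberg structure from Section~\ref{sec kirillov dec} to reduce the stabilizer and the orbit dimension and then induct, at each stage choosing the complementary vectors $\x,\y$ $\sigma$-homogeneous so that the $\sigma$-action is tracked through the coordinates. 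But for the purposes of this erratum, since \cite[Lemma 4.2]{Mat20} is already established and valid over any field of characteristic zero, the right move is simply to note that Lemma~\ref{lemme existence de polarisations stables} is its stated special case and no further argument is needed.
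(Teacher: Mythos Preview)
Your proposal is correct and matches the paper's approach exactly: the paper states the lemma as ``a very special case of \cite[Lemma 4.2]{MatringeBIMS}'' with no further proof, and you ultimately arrive at the same conclusion, namely to invoke \cite[Lemma 4.2]{Mat20} directly. Your additional discussion of the easy direction and the inductive strategy one could use to prove the cited fixed-point lemma from scratch is extra context beyond what the paper provides, but it is accurate and does not deviate from the paper's route.
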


Then \cite[Theorem 3.6, 3)]{MatringeBIMS} should be replaced by the following statement:\\

\noindent 3) Two irreducible representations $\pi(\mathrm{U}',\mathrm{U},\psi_{\phi})$ and $\pi(\mathrm{U}'',\mathrm{U},\psi_{\phi'})$ are isomorphic if and only if 
$\phi$ and $\phi'$ are in the same $\mathrm{U}$-orbit for the co-adjoint action.\\

\noindent The last two sentences of the proof of \cite[Theorem 3.6, 3)]{MatringeBIMS} should be replaced by:\\

\noindent  By induction this means that 
$\phi_{|\N_0}$ and $\phi'_{|\N_0}$ are $\U_0$-conjugate. Then it is explained just before \cite[Lemma 5.2]{K} at the end of the proof of \cite[Theorem 5.2]{K} that this implies that $\phi$ and $\phi'$ are indeed $\mathrm{U}$-conjugate.\\

\noindent One can then introduce the following notation:

\begin{nt}
The isomorphism class of the irreducible representation $\pi(\mathrm{U}',\mathrm{U},\phi)$ only depends on 
$\phi$, we set \[\pi(\psi_\phi):=\pi(\mathrm{U}',\mathrm{U},\psi_\phi).\]
\end{nt}

Finally the proofs and statements of \cite[Theorem 5.2 and Corollary 5.3]{MatringeBIMS} must be corrected as follow, thanks to Lemma \ref{polarisation stable} hereunder:\\

\noindent Note that $(\N^*)^\sigma$ and $(\frac{\N}{\N^\sigma})^*$ are canonically isomorphic, and we identify them. It is a space acted upon by $\U^\sigma$. Before stating the main theorem, we recall \cite[Lemma 2.2.1]{B2}, the proof of which is valid over $F$ (as it relies on \cite[Proposition 1.1.2]{V} which has no assumption on the field).

\begin{lemma}\label{polarisation stable}
Take $\phi\in(\frac{\N}{\N^\sigma})^*$, then there is a $\sigma$-stable Lie sub-algebra $\N'$ of $\N$ such that $(\phi,\N')$ is polarized.
\end{lemma}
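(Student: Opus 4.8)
\textbf{Proof proposal for Lemma \ref{polarisation stable}.}

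The plan is to deduce this from the general orbit-theoretic statement \cite[Lemma 2.2.1]{B2} (or, equivalently, from \cite[Lemme 2.2.1, a)]{B1} which already appeared in the excerpt as the source of Lemma \ref{lm polarisation stable} in Section 1.5), specialized to the linear form $\phi$ viewed as an element of $\N^*$ vanishing on $\N^\sigma$. The key observation is exactly the identification already stated: $(\frac{\N}{\N^\sigma})^*$ is canonically the subspace $(\N^*)^\sigma$ of $\sigma$-fixed forms, i.e. those $\phi$ with $\phi\circ\sigma=\phi$, and since $\sigma$ is an involutive automorphism this forces $\phi$ to kill the $(-1)$-eigenspace $\N^-$, which is precisely $\N^\sigma$ in the notation of this appendix. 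So the hypothesis of the lemma is just that $\phi$ is a $\sigma$-fixed linear functional.

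First I would recall the content of \cite[Lemma 2.2.1]{B2}: given a $\sigma$-fixed $\phi\in\N^*$, one can construct a polarization $\N'$ for $\phi$ that is $\sigma$-stable, the construction being an induction on $\dim(\N)$ along a $\sigma$-stable central (or almost-central) flag, using that $\sigma$ is semisimple so that every $\sigma$-stable subspace has a $\sigma$-stable complement. The induction works over an arbitrary field of characteristic zero because, as noted in the excerpt, it rests only on \cite[Proposition 1.1.2]{V}, which has no hypothesis on the base field; so the statement transfers verbatim to our $F$. Then I would simply say: apply this to our $\phi$, obtaining a $\sigma$-stable Lie subalgebra $\N'\subseteq\N$ subordinate to $\phi$ and of maximal dimension among subordinate subalgebras, i.e. $(\phi,\N')$ polarized. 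That is the claim.

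The only point that deserves a sentence of care — and the main (very mild) obstacle — is checking that the notion of "polarized" used in \cite{B2} matches the one in force here (subordinate plus maximal dimension, equivalently the dimension equality of Fact \ref{fact dim}); this is standard and already implicitly used throughout Section 1, so it costs nothing. One could alternatively give a self-contained induction mirroring the proof of Lemma \ref{lemme dec stable}: reduce to $\dim\Z=1$, choose a $\sigma$-stable Kirillov decomposition by Lemma \ref{lemme dec stable}, and either descend to $\U_0$ (when $\phi$ is not in general position relative to the centre) or use the $\sigma$-stable ideal $\U_0$ and a $\sigma$-stable polarization of $\phi_{|\U_0}$ — but invoking \cite[Lemma 2.2.1]{B2} directly is cleaner and is what the surrounding text already does for the analogous Lemma \ref{lm polarisation stable}, so I would keep the proof to the two-line citation.
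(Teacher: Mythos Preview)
Your approach is exactly the paper's: the lemma is simply a restatement of \cite[Lemma 2.2.1]{B2}, whose proof carries over to $F$ because it only uses \cite[Proposition 1.1.2]{V}, and the paper gives no further argument beyond this citation.

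One expository slip to fix: your unpacking of the identification $(\N/\N^\sigma)^*\simeq(\N^*)^\sigma$ is garbled. In this appendix the action on $\N^*$ is the \emph{twisted} one, $\sigma(\phi)=-\phi^\sigma$, so ``$\sigma$-fixed'' means $\phi\circ\sigma=-\phi$, not $\phi\circ\sigma=\phi$; and $\N^\sigma$ denotes the $(+1)$-eigenspace (the fixed points of $\sigma$ on $\N$), not $\N^-$. With the correct signs the identification reads: $\phi$ is $\sigma$-fixed for the twisted action iff $\phi$ vanishes on $\N^\sigma=\N^+$, which is precisely the hypothesis needed to invoke \cite[Lemma 2.2.1]{B2} (compare Lemma~\ref{lm polarisation stable}). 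This does not affect your argument, only its justification.
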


We can now prove the following result.

\begin{thm}\label{thm distinction and sel-duality}
A representation $\pi\in \Irr_{\mathrm{U}^\sigma}(\mathrm{U})$ is distinguished if and only if $\pi^\vee=\pi^\sigma$. Moreover the map $\mathrm{U}^{\sigma}.\phi\mapsto \pi(\psi_\phi)$ is a bijection from 
$\mathrm{U}^\sigma\backslash (\frac{\N}{\N^\sigma})^*$ to $\Irr_{\mathrm{U}^\sigma \dist}(\mathrm{U})$.
\end{thm}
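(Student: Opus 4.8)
The plan is to reduce the statement to Kirillov's coadjoint parametrisation and then to prove the two implications separately: ``conjugate self-dual $\Rightarrow$ distinguished'' by an explicit construction, and ``distinguished $\Rightarrow$ conjugate self-dual'' by induction on $\dim(\mathrm{U})$, in the style of the inductions of the body of the paper. First I would rephrase everything in terms of coadjoint orbits. By the corrected Kirillov classification \cite[Theorem 3.6]{MatringeBIMS} write $\pi=\pi(\psi_\phi)$ with $\phi\in\N^*$ determined up to the coadjoint action of $\mathrm{U}$; computing on an induced model gives $\pi^\vee\simeq\pi(\psi_{-\phi})$ and $\pi^\sigma\simeq\pi(\psi_{\phi\circ\sigma})$, so that, with $\sigma(\phi)=-\phi\circ\sigma$, the relation $\pi^\vee=\pi^\sigma$ is equivalent to $\phi$ and $\sigma(\phi)$ lying in the same $\mathrm{U}$-orbit, which by Lemma \ref{lemme existence de polarisations stables} holds if and only if the orbit of $\phi$ meets $(\frac{\N}{\N^\sigma})^*$. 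In particular $\mathrm{U}^\sigma.\phi\mapsto\pi(\psi_\phi)$ is well defined on $\mathrm{U}^\sigma\backslash(\frac{\N}{\N^\sigma})^*$, with image among the conjugate self-dual classes, and it remains to prove: (a) that conjugate self-dual representations are distinguished (so the image lies in $\Irr_{\mathrm{U}^\sigma\dist}(\mathrm{U})$); (b) that distinguished representations are conjugate self-dual (so the map is onto $\Irr_{\mathrm{U}^\sigma\dist}(\mathrm{U})$); (c) that the map is injective.

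For (a) I would repeat the argument of Proposition \ref{prop explicit local linear forms}. Assume $\phi$ vanishes on $\N^\sigma$; choose by Lemma \ref{polarisation stable} a $\sigma$-stable polarisation $\N'$ of $\phi$ and set $\mathrm{U}'=\exp(\N')$, so that $\pi\simeq\mathrm{ind}_{\mathrm{U}'}^{\mathrm{U}}(\psi_\phi)$ with $\mathrm{U}'$ $\sigma$-stable and $\psi_\phi$ trivial on $\mathrm{U}'^\sigma$ (because $\N'^\sigma\subseteq\N^\sigma$). A $\sigma$-stable basis complementary to $\N'$ in $\N$ — obtained by refining the flag of Section \ref{subsec compl basis}, $\sigma$ being semisimple — provides coordinates identifying $\mathrm{U}'^\sigma\backslash\mathrm{U}^\sigma$ with an affine space realised as a closed subset of $\mathrm{U}'\backslash\mathrm{U}$; hence every $f\in\mathrm{ind}_{\mathrm{U}'}^{\mathrm{U}}(\psi_\phi)$ has compactly supported restriction to $\mathrm{U}'^\sigma\backslash\mathrm{U}^\sigma$, the integral $\lambda\colon f\mapsto\int_{\mathrm{U}'^\sigma\backslash\mathrm{U}^\sigma}f(u)\,du$ defines a $\mathrm{U}^\sigma$-invariant functional, and it is non-zero since one may take $f$ supported near $\mathrm{U}'$ with $f(1)\neq0$.

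For (b) I would argue by induction on $\dim(\mathrm{U})$, splitting on the centre $\mathrm{Z}$. If $\pi$ is trivial on some $\sigma$-stable central line — in particular whenever $\dim(\mathrm{Z})\geq 2$, or when $\pi$ is trivial on $\mathrm{Z}$ — then $\pi$ factors through the corresponding quotient, which inherits $\sigma$ and onto whose fixed-point subgroup $\mathrm{U}^\sigma$ still surjects, and the inductive hypothesis applies. Otherwise $\dim(\mathrm{Z})=1$ and $\pi$ is non-trivial on $\mathrm{Z}$; take the Kirillov decomposition of $\N$ $\sigma$-stably (Lemma \ref{lemme dec stable}), arrange $\phi(\y)=0$, and write $\pi\simeq\mathrm{ind}_{\mathrm{U}_0}^{\mathrm{U}}(\pi_0)$ with $\mathrm{U}_0$ the $\sigma$-stable codimension-one ideal and $\pi_0$ irreducible. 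If the central vector $\z$ is $\sigma$-fixed then $\mathrm{Z}\subseteq\mathrm{U}^\sigma$ carries the non-trivial central character of $\pi$, so $\pi$ is not distinguished and there is nothing to prove; assume then $\z$ is $\sigma$-anti-fixed. Study $\Hom_{\mathrm{U}^\sigma}(\pi,\C)$ through the action of $\mathrm{U}^\sigma$ on the Kirillov model $\mathrm{ind}_{\mathrm{U}_0}^{\mathrm{U}}(\pi_0)\simeq\S(F,\pi_0)$ over the $\x$-line $\mathrm{U}_0\backslash\mathrm{U}$: if $\x$ is $\sigma$-fixed then $\mathrm{U}^\sigma\supseteq\exp(F.\x)$ acts transitively with stabiliser $\mathrm{U}_0^\sigma$, so $\pi|_{\mathrm{U}^\sigma}\simeq\mathrm{ind}_{\mathrm{U}_0^\sigma}^{\mathrm{U}^\sigma}(\pi_0|_{\mathrm{U}_0^\sigma})$ and $\Hom_{\mathrm{U}^\sigma}(\pi,\C)\simeq\Hom_{\mathrm{U}_0^\sigma}(\pi_0,\C)$; if $\x$ is $\sigma$-anti-fixed then $\y$ is $\sigma$-fixed, and an elementary computation shows $\exp(F.\y)\subseteq\mathrm{U}^\sigma$ acts on the fibre over $t$ by multiplication by the character $s\mapsto\psi(\phi(\z)\,st)$, which is non-degenerate in $t$ as $\phi(\z)\neq0$, so every invariant functional is forced to be carried by $t=0$ — the local incarnation of the vanishing of $\int_{\A/k}\psi(ta)\,da$ for $t\neq0$ used in the Heisenberg case of the body — and again $\Hom_{\mathrm{U}^\sigma}(\pi,\C)\hookrightarrow\Hom_{\mathrm{U}_0^\sigma}(\pi_0,\C)$. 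In either case this space is at most one-dimensional and non-zero precisely when $\pi_0^\vee=\pi_0^\sigma$, by the inductive hypothesis; and $\pi_0^\vee\simeq\pi_0^\sigma$ forces $\pi^\vee\simeq\pi^\sigma$, since $\mathrm{ind}_{\mathrm{U}_0}^{\mathrm{U}}$ intertwines the contragredient and precomposition with $\sigma$ up to isomorphism (all extensions to $\N$ of a given form on the Lie algebra of $\mathrm{U}_0$ being $\mathrm{U}$-conjugate). This proves (b) and, along the way, that $\dim\Hom_{\mathrm{U}^\sigma}(\pi,\C)\leq1$.

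Finally, (c) is exactly the ``uniqueness up to $\mathrm{U}^\sigma$-conjugacy'' refinement of \cite[Lemma 4.2]{MatringeBIMS}: two $\sigma$-fixed linear forms lying in the same $\mathrm{U}$-orbit are already in the same $\mathrm{U}^\sigma$-orbit. I expect the real obstacle to lie in the codimension-one step of (b) — isolating the relevant piece of the Kirillov model and running the Fourier-type vanishing in the $\sigma$-anti-fixed case — together with the orbit-theoretic content of \cite[Lemma 4.2]{MatringeBIMS} (the analogue, for $\mathrm{U}^\sigma$-orbits on coadjoint orbits, of a Witt-type extension theorem), on which both the initial reduction and (c) rest; everything else is routine Kirillov bookkeeping or direct citation of \cite[Theorem 3.6]{MatringeBIMS}, \cite{Mat20}, and the lemmas of this appendix.
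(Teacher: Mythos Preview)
Your argument is correct, and for the direction ``conjugate self-dual $\Rightarrow$ distinguished'' together with injectivity it coincides with the paper's proof: reduce via Lemma~\ref{lemme existence de polarisations stables} to $\phi$ vanishing on $\N^\sigma$, choose a $\sigma$-stable polarisation by Lemma~\ref{polarisation stable}, exhibit the integral $\lambda:f\mapsto\int_{\mathrm{U}'^\sigma\backslash\mathrm{U}^\sigma}f(u)\,du$, and invoke \cite[Lemma~4.4]{MatringeBIMS} for injectivity. The only divergence is your part~(b): the paper's proof simply does not treat the implication ``distinguished $\Rightarrow$ conjugate self-dual'' (nor, equivalently, surjectivity of the bijection), because that direction is \cite[Proposition~5.1]{MatringeBIMS}, which was unaffected by the misstatement being corrected and hence not repeated in the erratum. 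Your inductive argument through the $\sigma$-stable Kirillov decomposition is sound and is essentially what that proposition does, so you have reproved rather than bypassed it; the paper is just more economical in citing it.
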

\begin{proof}
Suppose that $\pi=\pi(\psi_{\phi})\in \Irr(\mathrm{U})$ is conjugate self-dual, then $\sigma (\phi)$ and 
$\phi'$ are in the same $\U$-orbit, which must contain a $\sigma$-fixed linear form thanks to Lemma \ref{lemme existence de polarisations stables}. So we can in fact suppose that $\phi\in (\frac{\N}{\N^\sigma})^*$. In particular by Lemma \ref{polarisation stable} we can write $\pi(\psi_\phi)=\pi(\mathrm{U}',\mathrm{U},\psi_\phi)$ for $\mathrm{U}'=\exp(\N')$ which is $\sigma$-stable. The quotient $\mathrm{U}'^\sigma\backslash \mathrm{U}^\sigma$ identifies with a closed subset of $\mathrm{U}'\backslash \mathrm{U}$ and the condition 
$\phi\in(\frac{\N}{\N^\sigma})^*$ implies that $\psi_\phi$ is trivial on $\mathrm{U}'^\sigma$. 
 Then $\pi$ is distinguished, with explicit linear nonzero $\mathrm{U}^\sigma$-invariant linear form given on $\pi$ by 
\[\lambda:f\mapsto \int_{\mathrm{U}'^\sigma\backslash \mathrm{U}^\sigma} f(u)du.\] To finish the proof it remains to prove the injectivity 
of the map $\mathrm{U}^{\sigma}.\phi\mapsto \pi(\psi_{\phi})$, which is \cite[Lemma 4.4]{MatringeBIMS}.
\end{proof}

In particular in the case of the Galois involution one gets a bijective correspondence between 
$\Irr(\mathrm{U}^\sigma)$ and $\Irr_{\mathrm{U}^\sigma \dist}(\U)$. Indeed $\mathbf{U}=\Res_{E/F}(\mathbf{U}^\sigma)$ for $E$ a quadratic extension of $F$. Writing $\d$ for an element of $E-F$ with square in $F$. One can identify the space $(\N^\sigma)^*$ to the space $(\N^*)^\sigma$ by the map 
\[\phi_\sigma\rightarrow \phi\] where 
\[\phi(N+\d N')=\phi_\sigma(N').\] This yields:

\begin{cor}\label{cor correspondence}
When $E/F$ is a Galois involution, the map $\pi(\psi_{\phi_\sigma})\rightarrow \pi(\psi_{\phi})$ is 
a bijective correspondence from $\Irr(\mathrm{U}^\sigma)$ to $\Irr_{\mathrm{U}^\sigma\dist}(\mathrm{U})$
\end{cor}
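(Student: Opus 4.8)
The plan is to realize the correspondence $\pi(\psi_{\phi_\sigma})\mapsto\pi(\psi_{\phi})$ as the composite of three bijections. The first is Kirillov's classification of the smooth irreducible representations of the unipotent group $\mathrm{U}^\sigma$, which identifies $\Irr(\mathrm{U}^\sigma)$ with the coadjoint orbit space $\mathrm{U}^\sigma\backslash(\N^\sigma)^*$ via $\pi(\psi_{\phi_\sigma})\leftrightarrow\mathrm{U}^\sigma.\phi_\sigma$; here I would first note that $\mathrm{U}^\sigma$, being a closed subgroup of the unipotent group $\mathrm{U}$ (in fact $\mathrm{U}^\sigma=\exp(\N^\sigma)$), is itself a connected unipotent $F$-group with Lie algebra $\N^\sigma$, so this classification applies to it, and with respect to the same additive character $\psi$ of $F$ since $\N$ and $\N^\sigma$ are both $F$-vector spaces. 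The second is the bijection of orbit spaces $\mathrm{U}^\sigma\backslash(\N^\sigma)^*\xrightarrow{\sim}\mathrm{U}^\sigma\backslash(\frac{\N}{\N^\sigma})^*$ induced by the $F$-linear isomorphism $\iota:\phi_\sigma\mapsto\phi$, $\phi(N+\d N')=\phi_\sigma(N')$, recalled just before the statement. The third is the bijection $\mathrm{U}^\sigma\backslash(\frac{\N}{\N^\sigma})^*\xrightarrow{\sim}\Irr_{\mathrm{U}^\sigma\dist}(\mathrm{U})$, $\mathrm{U}^\sigma.\phi\mapsto\pi(\psi_{\phi})$, furnished by the second assertion of Theorem \ref{thm distinction and sel-duality}.

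The one point needing care is that $\iota$ is $\mathrm{U}^\sigma$-equivariant for the coadjoint actions, so that it does descend to a bijection between orbit spaces. This is where the Galois hypothesis $\mathrm{U}=\Res_{E/F}(\mathrm{U}^\sigma)$ is used: it gives $\N=\N^\sigma\oplus\d\N^\sigma$ with $\sigma$ acting by $N+\d N'\mapsto N-\d N'$, and, crucially, the adjoint action of $\mathrm{U}^\sigma$ on $\N$ is the $E$-linear extension of its adjoint action on $\N^\sigma$, i.e. $\Ad(g)(N+\d N')=\Ad(g)N+\d\,\Ad(g)N'$ for $g\in\mathrm{U}^\sigma$ and $N,N'\in\N^\sigma$. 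Writing $\phi=\iota(\phi_\sigma)$, the computation $(g\cdot\phi)(N+\d N')=\phi(\Ad(g^{-1})N+\d\,\Ad(g^{-1})N')=\phi_\sigma(\Ad(g^{-1})N')=(g\cdot\phi_\sigma)(N')$ shows $g\cdot\phi=\iota(g\cdot\phi_\sigma)$, as wanted.

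Composing the three bijections then gives a bijection from $\Irr(\mathrm{U}^\sigma)$ onto $\Irr_{\mathrm{U}^\sigma\dist}(\mathrm{U})$ sending $\pi(\psi_{\phi_\sigma})$ to $\mathrm{U}^\sigma.\phi_\sigma$, then to $\mathrm{U}^\sigma.\phi$ with $\phi=\iota(\phi_\sigma)$, and finally to $\pi(\psi_{\phi})$, which is precisely the map of the statement; in particular it is well-defined on isomorphism classes (both ends being governed by the same orbit datum) and bijective. The argument is thus almost entirely formal, and the only --- quite mild --- obstacle is the equivariance of $\iota$ above, which rests on nothing more than the $E$-linearity of the adjoint action on the Lie algebra $\N$ of $\mathrm{U}=\Res_{E/F}\mathrm{U}^\sigma$.
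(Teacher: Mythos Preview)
Your proof is correct and follows exactly the approach the paper intends: the corollary is stated there without a separate proof, as an immediate consequence of Theorem \ref{thm distinction and sel-duality} together with the identification $(\N^\sigma)^*\simeq(\N^*)^\sigma=(\frac{\N}{\N^\sigma})^*$ via $\iota$ and Kirillov's classification for $\mathrm{U}^\sigma$. Your explicit verification that $\iota$ is $\mathrm{U}^\sigma$-equivariant (hence descends to orbit spaces) is a detail the paper leaves implicit, so your argument is in fact more complete.
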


\bibliography{unipotentdistinction}
\bibliographystyle{plain-fr}
\end{document}